\pgfplotsset{compat=1.18}
  \xpatchcmd{\@thm}{\fontseries\mddefault\upshape}{}{}{} % same font as thm-header
\newtheorem{theorem}{Theorem}[section]
\newtheorem{lemma}[theorem]{Lemma}
\newtheorem{corollary}[theorem]{Corollary}
\newtheorem{remark}[theorem]{Remark}
\newtheorem{example}[theorem]{Example}%[chapter]
\newtheorem{definition}[theorem]{Definition} %neu
\numberwithin{equation}{section}
\algrenewcommand\algorithmicrequire{\textbf{Input:}}
\algrenewcommand\algorithmicensure{\textbf{Output:}}
\newcommand{\E}{{\mathbb{E}}}
\newcommand{\N}{{\mathbb{N}}}
\newcommand{\R}{{\mathbb{R}}}
\newcommand{\cA}{{\cal A}}
\newcommand{\cE}{{\cal E}}
\newcommand{\cF}{{\cal F}}
\newcommand{\cL}{{\cal L}}
\newcommand{\cU}{{\cal U}}
\DeclareMathOperator*{\argmin}{arg\,min} 
\newcommand{\Qs}{{Q^{\ast}}}
\newcommand{\tg}{\vartheta}
\newcommand{\wrt}{{with respect to }}
\DeclareMathAlphabet{\pazocal}{OMS}{zplm}{m}{n}
\newcommand{\vast}{\bBigg@{4}}
\newcommand{\Vast}{\bBigg@{5}}
\title{Importance sampling of unbounded random stopping times: computing committor functions and exit rates without reweighting}
\author{Carsten Hartmann \and Annika Jöster \and Christof Schütte \and Alexander Sikorski \and Marcus Weber}
\begin{document}

\maketitle

\begin{abstract}
    Rare events in molecular dynamics are often related to noise-induced transitions between different macroscopic states (e.g.,~in protein folding). A common feature of these rare transitions is that they happen on timescales that are on average exponentially long compared to the characteristic timescale of the system, with waiting time distributions that have  (sub)exponential tails and infinite support. As a result, sampling such rare events can lead to trajectories that can be become arbitrarily long, with not too low probability, which makes the reweighting of such trajectories a real challenge. 
	Here, we discuss rare event simulation by importance sampling from a variational perspective, with a focus on applications in molecular dynamics, in particular the computation of committor functions. The idea is to design importance sampling schemes that (a) reduce the variance of a rare event estimator while controlling the average length of the trajectories and (b) that do not require the reweighting of possibly very long trajectories. In doing so, we study different stochastic control formulations for committor and mean first exit times, which we compare both from a theoretical and a computational point of view, including numerical studies of some benchmark examples.        
\end{abstract}

\tableofcontents

\section{Introduction}

Rare event simulation plays a key role in Scientific Computing, with applications in structural reliability analysis \cite{papaioannou2016sequential}, climate modelling \cite{ragone2018computation}, molecular dynamics \cite{anum}, multienergy systems \cite{christianen2024importance}, or financial risk analysis \cite{liu2005equilibrium}, to mention just a, few examples.   
Since the quantities of interest in rare event simulation (RESIM) are typically extremely small (e.g. probabilities) or extremely large (e.g. waiting times), the key concern of any numerical RESIM algorithm is the control of the relative error by reducing the sample variance of the estimators. 

There are two major classes of variance reduction techniques for RESIM: splitting methods such as RESTART \cite{restart}, Adaptive Multilevel Splitting \cite{ams}, or Subset Simulation \cite{zuev2015subset} that decompose the state space into a collection of nested subspaces, but that are based on the underlying probability distribution, and biasing methods, such as importance sampling \cite{lecuyerIS} that enhance the rare events under consideration by changing the underlying probability distribution and thus altering the rare event's statistics; see \cite{Asmussen2013,junejaHandbook} for an overview. 
An advantage of splitting methods is that they are non-invasive and relatively easy to parallelize, a disadvantage is that they typically require some prior knowledge of a low-dimensional reaction coordinate that allows to monitor the rare event. In contrast, biasing techniques like  importance sampling are mostly intrusive because they require to bias the underlying model dynamics or sampling mechanism. Exceptions are black-box importance sampling methods that change only the distribution of model input parameters, e.g. \cite{liu2005equilibrium,pan2020adaptive}, or  asymptotic techniques that approximate rare event probabilities without sampling from the large deviations rate function of the problem, e.g. \cite{grafke2019numerical,tong2021extreme}. 
We should also mention sequential Monte Carlo methods that combine both worlds, in that they embed change of measure techniques into a splitting-like framework, e.g. \cite{cerou2012,dai2022invitation}. The same goes for resampling or rescaling methods (e.g.~\cite{bugallo2017adaptive,elvira2023gradient}) that use previously sampled values of the quantity of interest to adaptively change the proposal distribution, either globally on in certain subsets of the state space. 

In this paper we discuss rare event simulation for stochastic differential equations from a variational perspective, with a focus on applications in molecular dynamics---in particular, the computation of exit rates and committor probabilities (i.e. the probability to reach one set before another). Our approach is partly in the spirit of the adaptive importance sampling (AIS)technique developed by Dupuis, Wang and co-workers \cite{DupuisWang2004,DupuisWang2007} that has been adapted to the diffusion setting in \cite{Weare2012,dupuis2012importance,Kostas2015}. 
The main similarity is that our approach uses feedback control representations of the optimal change of measure that is adapted to the system state; cf.~\cite{Hartmann2012,HartmannEnt2017}. 

There are, however, a few key differences: Firstly, our approach is nonasymptotic in that it does not rely on large deviations asymptotics, such as small noise asymptotics, large particle number limits or large waiting times. Secondly, and most importantly, the actual rare event estimation does not require reweighting, because the quantities of interest can be estimated directly from the value function of the associated stochastic control problem. While this will in general lead only to a biased estimate of the quantity of interest, which is related to the value function by a nonlinear transformation, it can be beneficial if the reweighting is sensitive to bad approximations of the optimal change of measure (e.g. in  high dimensions or if simulation over long time horizons is required). 

\paragraph{Contribution of this work} 

We describe a general approach that represents the optimal importance sampling measure that minimizes the variance of the estimator by a convex transformation of the underlying random variable. The quantity of interest and the transformed quantity are related by an inequality that can be turned into an equality by a suitable change of measure. We show that the change of measure for which equality is attained is the optimal importance sampling distribution. Specifically, we consider (1) a logarithmic  transformation of the moment generating function of the random variable (with the exponential function being the convex transformation), and (2) a square root transformation of the second moment (with the quadratic function as the convex transformation). The corresponding random variables are path functionals of SDEs that are defined up to an unbounded random stopping time. We study the associated indefinite time horizon stochastic control problems in detail that are (1) of linear quadratic type and (2) of risk-sensitive form. To our knowledge, AIS of problems that involve unbounded random stopping times is not well represented in the literature, the paper \cite{awad2013} being an exception, despite its relevance in statistical mechanics and molecular dynamics (see, e.g. \cite{donati2017girsanov,yuan2024optimal}). 

We show that the stochastic control formulations, while they both characterize a (theoretical) zero-variance importance measure, lead to importance sampling strategies with vastly different numerical costs. The reason is that, depending on the formulation, the likelihood of the rare event can increase or decrease, as measured by the average length of the resulting controlled trajectories. We also discuss pathological cases in form of optimal controls that generate a zero-variance change of measure, but lead to sample trajectories of infinite length with probability one. Such pathologies have been described in the seminal paper \cite{awad2013}, and we can now provide a systematic control interpretation of these observations, generalizing our own works \cite{hartmann2024riskneutral,anum}. 

The feedback control policies in the random stopping time scenario are stationary (i.e. without explicit time dependence, assuming that the processes are time-homogeneous). We devise an approximate policy iteration (API) scheme and prove convergence to the optimal control policy and the value function. Following ideas in \cite{chang1986successive}, we prove that the API scheme in the log transform case is unconditionally convergent, whereas the square root case requires some regularization to bound the controls during the policy evaluation steps. 
The API algorithms are tested for a benchmark committor problem, confirming the theoretical predictions with regard to monotonicity of the control value and convergence to the optimal policy. Despite being relatively simple (yet high-dimensional), the numerical examples show some features that are relevant when the approach is applied to committor function computations for more complicated dynamical system, such as biomolecular systems.

\paragraph{Outline of the paper}
 The rest of the article is structured as follows: In Section \ref{sec:zerovar} we outline the idea of using certainty-equivalence principles with strictly convex transformations to characterize zero-variance importance measures, which is then spelt out for stochastic differential equations in Section \ref{sec:SOC}, in which the associated stochastic optimal control representations of the optimal change of measure are derived. Section \ref{sec:API} is devoted to the formulation and the analysis of the API algorithm that is tested for simple benchmark committor problems in Section \ref{sec:numerics}. Control strategies for the notoriously difficult exit problem are discussed in Section \ref{sec:exit}, including a link between AIS and control variates. The  findings are summarized in Section \ref{sec:fin}. Appendix \ref{sec:genSOC} records some theoretical results that provide the control-theoretic background for Section \ref{sec:SOC}.

\section{Zero-variance change of measure}\label{sec:zerovar}

The key ingredient of importance sampling (IS) is a change of the underlying probability measure that reduces the estimation variance. To explain the key idea, we consider a nonnegative random variable $S\geq 0$ on some probability space $(\Omega,\cE,P)$. Suppose we want to estimate the expectation $\E[S]$ of $S$ under the probability $P$. Further, we suppose that there exists another probability measure $Q$ that has a strictly positive density $L$ \wrt $P$, at least when restricted to $\{S>0\}$. This implies that $P$ and $Q$ are mutually absolutely continuous on the set $\{S\neq 0\}$, and it allows us to recast $\E[S]$ as (see \cite{awad2013})
\begin{equation}\label{com}
	\E[S]=%\int_\Omega S(\omega)P(d\omega) =\int_\Omega S(\omega)L(\omega)Q(d\omega) = 
    \E_Q\!\left[S L^{-1}\right]\,,\quad L:=\frac{dQ}{dP}\,.
\end{equation}
Our aim is to choose $Q$ such that the variance of $SL^{-1}$ is minimal under $Q$. For nonnegative random variables that we consider here, even zero variance is theoretically possible, but the variance-minimizing measure $\Qs$ necessarily depends on $\E[S]$. Hence, direct sampling from $\Qs$ is not feasible.     

Here we will characterize the zero-variance property by a convexity argument that resembles what is known as certainty-equivalence principle. In control theory, certainty-equivalence means that the optimal control law for a stochastic dynamics can be recast as an optimal control law for an associated deterministic (certainty equivalent) problem; see \cite[Sec.~1]{whittle2002} and references therein.  
Here the idea is as follows: In \eqref{com}, we replace the random variable $S$ by a transformed random variable $\varphi(S)$ where $\varphi$ is a strictly convex (strictly increasing or decreasing) function with inverse $\varphi^{-1}$. The inverse transformation is used to invert the transformation $\varphi$ \emph{after} taking the expectation and so returning to the physical scale of the original random variable $S$. 
Thus, instead of $\E[S]$, we consider the certainty-equivalent expectation 
\begin{equation}
    \varphi^{-1}(\E[\varphi(S)])\,.
\end{equation} 
Two notable special cases are
	\begin{enumerate}
		\item $\varphi(s)=e^{-\lambda s}$ for $\lambda>0$, with the property
		\begin{equation}
		-\lambda^{-1}\log\E\big[e^{-\lambda S}\big] \le  \E[S]\,
		\end{equation}
        \item $\varphi(s)=|s|^p$ for $p > 1$, with the property
		\begin{equation}
		\left(\E\big[S^p\big]\right)^{1/p} \ge \E[S]\,.   
		\end{equation}
	\end{enumerate}
Since $\varphi$ is \emph{strictly} convex, equality in both cases holds iff $S$ is almost surely constant (in other words: deterministic), and we can use this fact as a characterization of a change of measure that nullifies the variance, since a random variable is constant iff its variance is zero. 

We will now discuss the two aforementioned special cases that both give rise to computationally feasible expressions for the zero-variance change of measure.

\subsection{First approach: moment generating function} %Quantity of Interest $\E\left[e^{-S} \right]$}

Firstly, we suppose that $L>0$ and assume the quantity of interest to have the form of a moment generating function (MGF)
\begin{equation}
\E\!\left[e^{-\lambda S} \right]=\E_Q\!\left[e^{-\lambda S}L^{-1} \right].
\end{equation}
Equivalently,
\begin{equation}
\E\!\left[e^{-\lambda S} \right] = \E_Q\!\left[e^{-\lambda (S - \lambda^{-1}\log L)} \right] .
\end{equation}
\begin{lemma}[Gibbs variational principle, cf.~\cite{daipra1996,anum}]\label{Gibbs}
	Under suitable conditions guaranteeing that expressions remain finite, we have 
	\begin{equation}\label{gibbs}
	-\lambda^{-1}\log \E\left[e^{-\lambda S} \right] =\inf\left\{\E_Q(S)+\lambda^{-1}D(Q\vert P)\colon Q\ll P\right\},
	\end{equation}
	where
\begin{equation}
D(Q\vert P):= \begin{cases}
    \E_Q[\log L] & \text{if }\; Q\ll P\,,\; \log L\in L^1_Q\\ +\infty &  \textrm{else}
\end{cases}
\end{equation}
is the Kullback-Leibler divergence or relative entropy between $Q$ and $P$. The probability measure $\Qs$, for which the infimum in (\ref{gibbs}) is attained is given by
\begin{equation}
	\frac{d\Qs}{dP}=\frac{e^{-\lambda S}}{\E\left[e^{-\lambda S} \right]}\,.
\end{equation}
Moreover, if $P\ll \Qs$, 
\begin{equation}\mathrm{Var}_{\Qs}\!\left(e^{-\lambda S}L^{-1}_* \right)=0,\quad L_*^{-1}:=\frac{dP}{d\Qs}.\end{equation}
\end{lemma}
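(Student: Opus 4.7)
My plan is to derive all three assertions of the lemma from a single application of Jensen's inequality to the logarithm, applied under the candidate measure $Q$. The starting point is the elementary rewriting, valid for any $Q \ll P$ with density $L = dQ/dP$:
\begin{equation*}
\E_Q[S] + \lambda^{-1}D(Q\vert P) \;=\; -\lambda^{-1}\,\E_Q\!\left[\log\frac{e^{-\lambda S}}{L}\right].
\end{equation*}
This uses only the definition $D(Q|P) = \E_Q[\log L]$ together with linearity of $\E_Q$; the integrability conditions that make the individual terms finite I would absorb into the phrase ``suitable conditions'' in the hypothesis, which at minimum requires $L > 0$ where $e^{-\lambda S} > 0$.

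Next I would apply Jensen's inequality for the concave function $\log$ under $Q$, combined with the change-of-measure identity $\E_Q[f/L] = \E_P[f]$:
\begin{equation*}
\E_Q\!\left[\log\frac{e^{-\lambda S}}{L}\right] \le \log \E_Q\!\left[\frac{e^{-\lambda S}}{L}\right] = \log\E_P[e^{-\lambda S}].
\end{equation*}
Substituting this into the identity of Step~1 gives $\E_Q[S] + \lambda^{-1}D(Q|P) \ge -\lambda^{-1}\log\E_P[e^{-\lambda S}]$ for every admissible $Q$, which is the nontrivial direction of the variational principle. Strict concavity of $\log$ implies that equality in Jensen holds iff $e^{-\lambda S}/L$ is $Q$-almost surely constant, so the minimizer must be proportional to $e^{-\lambda S}$; the normalization $\E_P[L] = 1$ then pins down $dQ^*/dP = e^{-\lambda S}/\E_P[e^{-\lambda S}]$. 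Plugging $L = dQ^*/dP$ back into Step~1 verifies directly that the lower bound is attained at $Q^*$, which simultaneously proves the variational identity and identifies the minimizer.

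The zero-variance assertion then reduces to a one-line calculation. Under the extra hypothesis $P \ll Q^*$, the Radon--Nikodym derivative $L_*^{-1} = dP/dQ^*$ exists and equals $\E_P[e^{-\lambda S}]\,e^{\lambda S}$, so that $e^{-\lambda S}L_*^{-1}$ is identically the deterministic constant $\E_P[e^{-\lambda S}]$ and therefore has zero variance under any probability. The main obstacle is not the algebra but the technical conditions that the statement deliberately suppresses: one needs $e^{-\lambda S} > 0$ $P$-almost surely for $Q^*$ to be equivalent to $P$, and one needs enough integrability (of $e^{-\lambda S}$ and of $S e^{-\lambda S}$) for $D(Q^*|P)$ and $\E_{Q^*}[S]$ to be well defined and finite. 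In the motivating setting of path functionals up to an unbounded random stopping time these finiteness and integrability properties are substantive and must be checked case by case, so I would record them as part of the hypotheses rather than attempt a general existence argument here.
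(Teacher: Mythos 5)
Your proof is correct and mirrors the argument the paper itself uses for the stochastic-control analogue of this lemma (Lemma \ref{lem:socGibbs}), whose proof applies Jensen's inequality to $-\lambda^{-1}\log$ under the biased measure in exactly the same way; the paper does not supply its own proof of Lemma \ref{Gibbs}, deferring to the cited references. One step deserves to be stated more carefully: for $Q \ll P$ but not $Q \sim P$, the change-of-measure identity gives only $\E_Q[e^{-\lambda S}/L] = \E_P[e^{-\lambda S}\mathbf{1}_{\{L>0\}}] \le \E_P[e^{-\lambda S}]$, so the rightmost step in your central display is in general an inequality, not an equality. This does not break the argument, because you are proving a lower bound and $\log$ is increasing, so the chain $\E_Q[\log(e^{-\lambda S}/L)] \le \log\E_Q[e^{-\lambda S}/L] \le \log\E_P[e^{-\lambda S}]$ still delivers what you need; and since $e^{-\lambda S} > 0$ everywhere, the minimizer $Q^*$ is automatically equivalent to $P$ so equality is recovered there. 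I would just replace the equals sign by $\le$ and note that both inequalities become equalities precisely at $Q^*$, since this is exactly what makes the identification of the minimizer complete.
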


The optimal change of measure inevitably depends on the quantity of interest. Yet, as we will see in Section \ref{sec:SOC} below, the variational formulation gives rise to a computationally feasible optimization problem.

\subsection{Second approach: second moment minimization} % MinimizationQuantity of Interest $\E[S]$, Second Moment Minimization}

We now consider a quantities form of second moments: another way of minimizing the variance of $SL^{-1}$ under $Q$ is based on the observation that 
\begin{equation}
\mathrm{Var}_Q(SL^{-1})\geq 0 \quad\Longleftrightarrow\quad  \E_Q[S^2L^{-2}]\ge \left(\E[S]\right)^2\,, \end{equation}
since $\E[S]=\E_Q[SL^{-1}]$ for all $Q\sim P$. Let us introduce the following expressions.
\begin{definition}\label{def:squareroottransform}
	We define 
	\begin{equation}
		%\begin{split}
			\Psi := \E[S]\,,\quad V:=\inf\limits_{Q}\E_Q\left[S^2L^{-2}\right]\,,\quad 
		%\end{split}
	\end{equation}
    where we assume throughout that $Q\sim P$ on the set $\{S\neq 0\}$.
\end{definition}

\begin{lemma}[Zero variance]\label{lemma_2ndMoment}
	The expressions $V$ and $\Psi$ satisfy
\begin{equation}
\Psi^2 \leq V,
\end{equation}
with equality iff $Q=\Qs$ where 
\begin{equation}
	\frac{d\Qs}{dP}:= \frac{S}{\E[S]}\quad \Qs\text{-a.s.}\,.
\end{equation}
In this case, $V=E_\Qs\!\left[S^2L_*^{-2}\right]$, with $L_*^{-1}=\frac{dP}{d\Qs}$, in other words,
\begin{equation}
		\mathrm{Var}_\Qs(SL_*^{-1}) = 0\,.
        %\E_\Qs\!\left[S^2L_*^{-2} \right] - \left(\E_{\Qs}\!\left[SL_*^{-1}\right] \right)^2 = 0\, .
\end{equation}
\begin{proof}
        For $S\notin L^2_P$ there is nothing to prove, so we suppose that $S\in L^2_P$ or, equivalently, $SL^{-1}\in L^2_Q$.
		By Jensen's inequality, 
		\begin{equation}
		\E_Q\!\left[\left(S L^{-1}\right)^2\right]\geq \left(\E_Q\left[S L^{-1}\right]\right)^2=\left(\E[S]\right)^2=\Psi^2,
		\end{equation}
		hence,
		\begin{equation}
			V=\inf\limits_{Q}\E_Q\!\left[S^2L^{-2}\right]\geq \Psi^2\,,
		\end{equation}
        in particular, 
        \begin{equation}
			\E_\Qs\!\left[S^2L_*^{-2}\right]\geq \Psi^2\,,
		\end{equation}
        To show the reverse inequality, let $S_n=S\mathbf{1}_{\{S< n\}}$ and define 
        \begin{equation}
			L_n=\frac{dQ_n}{dP}= \frac{S_n}{\E[S_n]}\,.
		\end{equation}
        Clearly, $Q_n(0<S<n)=1$, and so 
        \begin{equation}
            \E_{Q_n}\!\left[S^2L_n^{-2}\right] = \int_{\{0<S<n\}} \left(\E[S_n]\right)^2 dQ_n = \left(\E[S_n]\right)^2
        \end{equation}
        and Fatou's Lemma implies 
        \begin{equation}
         \E_\Qs\!\left[S^2L_*^{-2}\right] \le \liminf_{n\to\infty}\E_{Q_n^*}\!\left[S^2L_n^{-2}\right] = \lim_{n\to\infty}\left(\E[S_n]\right)^2 = \left(\E[S]\right)^2\,.
        \end{equation}
        Since the rightmost expression equals $\Psi^2$, it follows that  
        \begin{equation}
        V  = \E_\Qs\!\left[S^2L_*^{-2}\right] = \Psi^2\,,
        \end{equation}
        which concludes the proof.
	\end{proof}
\end{lemma}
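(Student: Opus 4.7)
The plan is to establish the inequality $\Psi^2 \le V$ via Jensen's inequality, and then to exhibit $Q^*$ as an attaining measure by observing that $SL_*^{-1}$ is almost surely constant under $Q^*$, which immediately kills the variance and forces the second moment to equal $\Psi^2$. The structure is therefore a ``$\ge$ plus witness'' argument, with a little care needed for integrability.

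For the lower bound, I would apply Jensen's inequality to the convex function $x\mapsto x^2$ under each admissible measure $Q\sim P$ (on $\{S\neq 0\}$). This gives
\begin{equation}
\E_Q\!\left[(SL^{-1})^2\right]\ge \left(\E_Q[SL^{-1}]\right)^2 = (\E[S])^2 = \Psi^2,
\end{equation}
where the first identity uses the change-of-measure formula \eqref{com}. Taking the infimum over $Q$ yields $V\ge \Psi^2$. This step is essentially free and does not require $S\in L^2_P$: if the right side is $+\infty$ the inequality is vacuous; otherwise it is the standard application of Jensen.

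For attainment, I would verify directly that the candidate $Q^*$ with $dQ^*/dP = S/\E[S]$ reduces $SL_*^{-1}$ to a deterministic quantity. Under $Q^*$ we have $L_*^{-1} = \E[S]/S$, which is well-defined $Q^*$-a.s.\ because $Q^*(S=0)=0$. Consequently $SL_*^{-1} = \E[S]$, $Q^*$-a.s., so $\E_{Q^*}[(SL_*^{-1})^2] = (\E[S])^2 = \Psi^2$, proving $V\le \Psi^2$ and simultaneously $\mathrm{Var}_{Q^*}(SL_*^{-1})=0$. The strict convexity of $x\mapsto x^2$ guarantees that $Q^*$ is the unique minimizer, since equality in Jensen's inequality under $Q$ forces $SL^{-1}$ to be $Q$-a.s.\ constant, and this constant must equal $\E[S]$ by the unbiasedness of the estimator, which then pins down $L$ and hence $Q$.

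The main obstacle I anticipate is the integrability issue: although $SL_*^{-1}$ is trivially bounded under $Q^*$, the Radon--Nikodym derivative $S/\E[S]$ need not define a valid probability density unless $\E[S]<\infty$, and the Jensen step on the left only makes sense when $S\in L^2_P$ (equivalently $SL^{-1}\in L^2_Q$). If $S\notin L^2_P$ both sides of the proposed equality remain finite (since only $\E[S]$ is involved), so I would handle this by a truncation argument: set $S_n := S\mathbf{1}_{\{S<n\}}$ with the associated densities $L_n := S_n/\E[S_n]$, observe that $\E_{Q_n}[S^2 L_n^{-2}] = (\E[S_n])^2$ is finite for each $n$, and then pass to the limit using Fatou's lemma and monotone convergence of $\E[S_n]\uparrow \E[S]$ to deduce $\E_{Q^*}[S^2 L_*^{-2}] \le \Psi^2$. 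Combined with the Jensen lower bound, this closes the loop and also makes precise in what sense $Q^*$ is the optimizer when $S$ is only integrable.
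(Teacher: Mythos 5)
Your proposal is correct and covers the paper's argument: the Jensen lower bound is identical, and your fallback truncation-plus-Fatou argument (with $S_n=S\mathbf{1}_{\{S<n\}}$, $L_n=S_n/\E[S_n]$, and $\E_{Q_n}[S^2L_n^{-2}]=(\E[S_n])^2$) is exactly the paper's attainment step. You also include a cleaner direct route that the paper bypasses: under $Q^*$ one has $Q^*(\{S=0\})=0$ and hence $SL_*^{-1}=\E[S]$ $Q^*$-a.s., so the second moment is $(\E[S])^2$ immediately and the variance vanishes trivially; since the lemma already restricts to $S\in L^2_P$ (so $\E[S]<\infty$), the truncation is not strictly needed in this case and your direct computation is the shorter path. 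Finally, you address the ``iff'' part of the claim via strict convexity of $x\mapsto x^2$ (equality in Jensen forces $SL^{-1}$ to be $Q$-a.s.\ constant, the constant is $\E[S]$ by unbiasedness, which pins down $L$ on $\{S>0\}$), a direction the paper's proof states but does not spell out.
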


In the next section, we will derive stochastic control formulations for the aforementioned variational principles. Specifically, we consider random variables $S$ that are path functionals of solutions to stochastic differential equations.

\section{Stochastic Optimal Control}\label{sec:SOC} 

In what follows, we consider $X_s\in\R^d$ being the solution of the stochastic differential equation (SDE) 
\begin{equation}\label{sde0}
	dX_s=b(X_s)ds+\sigma(X_s)dB_s,\quad X_0=x\,,
\end{equation}
with $b\colon\R^d\to\R^d,\,\sigma\colon \R^d\to\R^{d\times k}$ satisfying the usual Lipschitz and growth conditions that guarantee that (\ref{sde0}) has a unique strong solution. Here and in the following, $B_s\in\R^k$ denotes standard Brownian motion where $k\le d$. We call $\cL$ the second order differential operator given by 
\begin{equation}
    \cL = \frac{1}{2}\sigma\sigma^\top\colon\nabla^2 + b\cdot\nabla\,.
\end{equation}
Mostly, we will deal with gradient drift $b=-\nabla V$ for some smooth potential $V\colon\R^d\to\R$ that is bounded from below and constant diffusion coefficient $\sigma=\sqrt{2 \vartheta}$ for some parameter $\vartheta>0$; in this case, $k=d$.  

We consider $P$ to be a probability measure of the path space $C([0,\infty),\R^d)$ associated with (\ref{sde0}), while the measure $Q$ will be associated with a controlled SDE of the form (precise definitions will be given below)
\begin{equation}\label{sde+u}
	dX^u_s=(b(X^u_s)+\sigma(X^u_s)u_s)ds+\sigma(X^u_s)dB_s,\quad X^u_0=x,
\end{equation}
where  $u_s= \underline{u}(X_s^u)$ is some stationary feedback control to be specified below. For each control $u$, we will also consider the corresponding SDE for $-u$, i.e.,
\begin{equation}\label{sde-u}
	dY^u_s=(b(Y^u_s)-\sigma(Y^u_s)u_s)ds+\sigma(Y^u_s)dB_s,\quad Y^u_0=x,
\end{equation}
where and $\tilde{u}_r:=-u_r= -\underline{u}(Y_r^u)$. We assume throughout that the controls are such that the corresponding SDEs have unique strong solutions.

In what follows, we will consider stopped versions of the controlled and uncontrolled SDEs that are defined up to some stopping time 
\begin{equation}\tau^u:=\inf\{t\geq0\vert X^u_t\notin D\} \quad \text{and} \quad\tilde{\tau}^u:=\inf\{t\geq0\vert Y^u_t\notin D\}
\end{equation} 
for some measurable bounded open set $D\subset\R^d$ where we use the shorthand $\tau=\tau^{u=0}$. We will sometimes simply write $\tau$ instead of $\tau^u$ or $\tilde{\tau}^u$ if it is clear from the context that $\tau$ is a stopping time \wrt any of the controlled processes $X^u$ or $Y^u$. 
 Since we will extensively use changes of measures, we first state Girsanov's Theorem (cf.~\cite[Thm.~38.5]{rogers2000diffusions})

\begin{theorem}[Girsanov's Theorem]\label{thm:girsanov}
Let $u=(u_s)_{s\ge 0}$ be an admissible control, such that 
\begin{equation}
\mathscr{L}_t = \exp\!\left(- \frac{1}{2}\int_{0}^{t}\vert u_s\vert^2 ds + \int_{0}^{t} u_s dB_s \right)
\end{equation}
is a uniformly integrable martingale \wrt $P$. Then the path space measure $P^u$ defined by 
\begin{equation}
\frac{dP^u}{dP} = \mathscr{L}_\infty
\end{equation}
is equivalent to $P$, and the $P$-law of $X^u$ is the $P^u$-law of $X$. In other words, 
\begin{equation}
B_t^u = B_t - \int_0^t u_s\,ds\,,\quad t > 0
\end{equation}
is a Brownian motion under $P^u$, and $P^u$-a.s.
\begin{equation}
	dX_s=(b(X_s)+\sigma(X_s)u_s)ds+\sigma(X_s)dB^u_s,\quad X_0=x\,.
\end{equation}
\end{theorem}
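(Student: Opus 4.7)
The plan is a standard three-step reduction. First, I establish that $P^u$ is well-defined as a probability measure equivalent to $P$. Second, I verify via L\'evy's characterization that $B_t^u := B_t - \int_0^t u_s\, ds$ is a $k$-dimensional Brownian motion under $P^u$. Third, I substitute back into the SDE for $X$ to obtain the stated representation for $X^u$ under $P^u$.

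For Step 1, uniform integrability of $(\mathscr{L}_t)_{t\ge 0}$ together with the martingale property ensures that $\mathscr{L}_\infty := \lim_{t\to\infty}\mathscr{L}_t$ exists $P$-a.s.\ and in $L^1(P)$, with $\E[\mathscr{L}_\infty] = \mathscr{L}_0 = 1$, so $dP^u = \mathscr{L}_\infty\, dP$ defines a probability measure; since $\mathscr{L}_t>0$ $P$-a.s., $P^u$ is equivalent to $P$ on each $\cF_t$ with Radon--Nikodym density $\mathscr{L}_t$. For Step 2, I use the change-of-measure criterion that a process $M$ is a $P^u$-local martingale iff $\mathscr{L} M$ is a $P$-local martingale. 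Applying It\^o's formula componentwise to $\mathscr{L}_t B^{u,i}_t$, with $d\mathscr{L}_t = \mathscr{L}_t u_t\cdot dB_t$, $dB^{u,i}_t = dB^i_t - u^i_t\, dt$, and cross-variation $d[\mathscr{L}, B^{u,i}]_t = \mathscr{L}_t u^i_t\, dt$, the finite-variation contributions $-\mathscr{L}_t u^i_t\, dt + \mathscr{L}_t u^i_t\, dt$ cancel, leaving a pure stochastic integral against $B$. Hence $\mathscr{L} B^{u,i}$ is a $P$-local martingale, so each $B^{u,i}$ is a $P^u$-local martingale. Continuity of $B^u$ is inherited from $B$, and $[B^{u,i}, B^{u,j}]_t = [B^i, B^j]_t = \delta_{ij} t$ because $B^u - B$ is absolutely continuous and contributes nothing to any covariation. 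L\'evy's characterization then gives that $B^u$ is a $P^u$-Brownian motion.

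Step 3 is essentially algebraic: rearranging $dB_t = dB^u_t + u_t\, dt$ and substituting into $dX_s = b(X_s)\, ds + \sigma(X_s)\, dB_s$ yields $dX_s = (b(X_s) + \sigma(X_s) u_s)\, ds + \sigma(X_s)\, dB^u_s$, valid $P^u$-a.s.; the equality of the $P$-law of $X^u$ and the $P^u$-law of $X$ then follows from weak uniqueness for the controlled SDE. The main obstacle is entirely encapsulated in the uniform integrability hypothesis on $\mathscr{L}$: without it, $\mathscr{L}$ would only be a $P$-local martingale, $\E[\mathscr{L}_\infty]$ could be strictly less than $1$, and $P^u$ would fail to be a probability measure. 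Verifying this hypothesis in concrete applications---typically via Novikov's or Kazamaki's conditions on $u$---is where the real analytic work lies; the computations above are then formal consequences.
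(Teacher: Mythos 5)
The paper does not prove this theorem; it is stated as a cited result from Rogers and Williams (Thm.\ 38.5), so there is no internal proof to compare against. Your argument is the standard one: martingale convergence plus uniform integrability to define $P^u$, the lemma that $M$ is a $P^u$-local martingale iff $\mathscr{L}M$ is a $P$-local martingale, L\'evy's characterization for $B^u$, and algebraic substitution for the SDE. The It\^o computation $d\mathscr{L}_t=\mathscr{L}_t\,u_t\cdot dB_t$, the cancellation of finite-variation terms in $d(\mathscr{L}_t B^{u,i}_t)$, and the invariance of quadratic covariation under the absolutely continuous drift shift are all correct. One subtlety you should flag explicitly: uniform integrability of $\mathscr{L}$ gives $\E[\mathscr{L}_\infty]=1$, hence $P^u\ll P$ on $\cF_\infty$ and $P^u\sim P$ on each $\cF_t$ (since $\mathscr{L}_t>0$ a.s.), which is all that your Steps 2 and 3 actually use; but full equivalence of $P^u$ and $P$ on $\cF_\infty$, as stated in the theorem, additionally requires $\mathscr{L}_\infty>0$ $P$-a.s., and this does \emph{not} follow from uniform integrability alone---a nonnegative UI martingale can still vanish at infinity on a set of positive $P$-measure. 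In the applications in this paper the change of measure is always evaluated on a stopped $\sigma$-algebra $\cF_\tau$, where the local equivalence you establish is sufficient, so the practical impact is nil, but the gap between ``$P^u\sim P$ on each $\cF_t$'' and ``$P^u\sim P$'' deserves a sentence.
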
 
	
\begin{remark}\label{rem:reweighting}
    The process $\mathscr{L}=(\mathscr{L}_t)_{t\ge 0}$ is the SDE version of the abstract likelihood ratio $L=\frac{dQ}{dP}$ from the previous section. Girsanov's Theorem implies that for any $P$-integrable and $\mathcal{F}_\tau$-measurable random variable $S=S(X)$, where $\mathcal{F}_\tau$ is the $\sigma$-algebra generated by $(X_s)_{0\le s\le \tau}$, we have 
    \begin{align*}
        \E^x[S(X)] & = \E^x_{P^u}\!\left[S(X)\mathscr{L}_\infty^{-1}(X)\right]\\ & 
        = \E^x_{P^u}\!\left[\E^x_{P^u}\!\left[S(X)\mathscr{L}_\infty^{-1}(X)|\mathcal{F}_\tau\right]\right]\\
        & = \E^x_{P^u}\!\left[S(X)\E^x_{P^u}\!\left[\mathscr{L}_\infty^{-1}(X)|\mathcal{F}_\tau\right]\right]\\
        & = \E^x_{P^u}\!\left[S(X)\mathscr{L}_\tau^{-1}(X)\right]\\
        & = \E^x\!\left[S(X^u)\mathscr{L}_\tau^{-1}(X^u)\right]\,.
    \end{align*}
\end{remark}
\paragraph*{Stochastic control problem with indefinite time horizon.}

Before we come to the stochastic control representations of the zero-variance change of measure, we introduce a general stochastic control problem with a random, unbounded stopping time. To this end, we define the cost functional 
\begin{equation}\label{cost}
		J(x,u)=\E^{x}\left(\int_{0}^{\tau^u} f(X^u_r,u_r)\Gamma(r)dr+g(X^u_{\tau^u})\Gamma(\tau^u)\right),
\end{equation}	
where $X^u$ is the solution to (\ref{sde+u}), and 
\begin{equation}\label{costGamma}
\Gamma(s):=\exp\!\left(-\int_{0}^{s}\beta(X^u_r,u_r)dr\right)\,.
\end{equation}
The next lemma states necessary optimality conditions for the minimization of the objective function (\ref{cost})--(\ref{costGamma}) under the controlled dynamics (\ref{sde+u}).

\begin{lemma}[Generalized stochastic optimal control problem]\label{generalized_SOC}
Let 
\begin{equation}
v(x):=\min_{u\in \cA} J(x,u)
\end{equation}
be the value function associated with (\ref{cost})--(\ref{costGamma}) where $\cA$ is a set of admissible Markovian controls with values in $U\subset\R^k$ such that (\ref{sde+u}) has a unique strong solution. 
Then $v=v(x)$ solves 
the Hamilton-Jacobi-Bellman (HJB) equation 
    \begin{equation}\label{HJB2}
        \begin{aligned}
        \min_{a\in U}\Bigg \{ f-\beta\cdot v+(b+\sigma a)\cdot \nabla v + \frac{1}{2}\sigma\sigma^\top:\nabla_{xx}v\Bigg\} & = 0\quad \text{in $D$}\\
        v & = g \quad \text{on $\partial D$}\,.
        \end{aligned}
    \end{equation}
\end{lemma}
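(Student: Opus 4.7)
The plan is to establish the HJB equation \eqref{HJB2} via the standard dynamic programming route, adapted to the indefinite time horizon and the multiplicative discount $\Gamma$. The two main ingredients are the Dynamic Programming Principle (DPP) and Itô's formula applied to $v(X^u_t)\Gamma(t)$, combined at the end by letting the time increment tend to zero.

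First I would formulate the DPP: for any admissible $u\in\cA$ and any stopping time $\theta$ with $0\le \theta\le \tau^u$,
\begin{equation}
v(x) \;\le\; \E^x\!\left[\int_0^\theta f(X^u_r,u_r)\Gamma(r)\,dr + v(X^u_\theta)\Gamma(\theta)\right],
\end{equation}
with equality if $u$ is optimal. The proof is the classical tower-property argument using the strong Markov property of $X^u$ at $\theta$, together with the fact that $\Gamma(t+s)=\Gamma(t)\Gamma_t(s)$ where $\Gamma_t(s)$ is the discount built from the shifted trajectory; the assumed regularity of $f,g,\beta$ and the admissibility of $u$ guarantee that the conditional expectations are well defined.

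Next, assuming that $v\in C^2(D)$ (I would state this as a regularity hypothesis, deferring the actual proof to elliptic/parabolic PDE theory as is customary at this level of generality), I would apply Itô's formula to $t\mapsto v(X^u_t)\Gamma(t)$. Since
\begin{equation}
d\Gamma(t) = -\beta(X^u_t,u_t)\Gamma(t)\,dt,
\end{equation}
we obtain
\begin{equation}
d\bigl(v(X^u_t)\Gamma(t)\bigr) = \Gamma(t)\bigl[\cL^u v(X^u_t) - \beta(X^u_t,u_t)v(X^u_t)\bigr]dt + \Gamma(t)\nabla v(X^u_t)^\top\sigma(X^u_t)\,dB_t,
\end{equation}
where $\cL^u = (b+\sigma u)\cdot\nabla + \tfrac12\sigma\sigma^\top\!:\!\nabla^2$. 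Taking expectations in \eqref{HJB2}'s precursor, plugging into the DPP, and using that the stochastic integral is a local martingale (stopped at $\theta$ before exiting $D$ the integrand is bounded, so expectation vanishes) yields
\begin{equation}
0 \;\le\; \E^x\!\left[\int_0^\theta \Gamma(r)\bigl[f(X^u_r,u_r) - \beta(X^u_r,u_r)v(X^u_r) + \cL^u v(X^u_r)\bigr]dr\right],
\end{equation}
with equality along an optimal $u$. Choosing $\theta = \theta_h := h\wedge\tau^u$ with a constant control $u_r\equiv a\in U$ on $[0,\theta_h]$, dividing by $h$ and sending $h\downarrow 0$, continuity of the integrand then gives the pointwise inequality
\begin{equation}
0 \le f(x,a) - \beta(x,a)v(x) + (b(x)+\sigma(x)a)\cdot\nabla v(x) + \tfrac12\sigma\sigma^\top(x)\!:\!\nabla^2 v(x)
\end{equation}
for every $a\in U$, while equality is attained by the optimal feedback, yielding \eqref{HJB2}. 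The boundary condition $v=g$ on $\partial D$ is immediate, since $\tau^u=0$ whenever $X^u_0=x\in\partial D$.

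The main obstacle is the unbounded random stopping time $\tau^u$, which is the reason I would insist on the discount $\Gamma$ providing enough integrability (or otherwise assume $\E^x[\tau^u]<\infty$ uniformly in admissible $u$). Concretely, I need (i) $\Gamma(\tau^u)g(X^u_{\tau^u})$ and $\int_0^{\tau^u}|f|\Gamma\,dr$ to be uniformly integrable so that the DPP at time $\theta\wedge\tau^u$ passes to the limit $\theta\to\tau^u$, and (ii) the local martingale term to be a true martingale up to $\theta_h\wedge\tau^u$, which follows from boundedness of $\nabla v$ on any compact subset of $D$. The $C^2$-regularity of $v$ in $D$ is the other classical sticky point; in a fully rigorous treatment one would either impose it as a hypothesis of the lemma, derive it from uniform ellipticity of $\sigma\sigma^\top$ together with Hölder regularity of the coefficients, or work with $v$ as a viscosity solution and circumvent Itô's formula altogether.
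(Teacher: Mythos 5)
Your sketch follows essentially the same route as the paper's appendix proof: a DPP for the discounted indefinite-horizon cost (established via the tower property), Itô's formula applied to $\Gamma(t)v(X^u_t)$ using $d\Gamma=-\beta\Gamma\,dt$, substitution into the DPP, division by $h$ and the limit $h\downarrow 0$ with a constant control $a$ to extract the pointwise inequality, and the optimal-control/Feynman--Kac argument for equality. Your explicit flagging of the uniform-integrability issues and the $C^2$-regularity hypothesis matches what the paper leaves implicit or defers to references, so this is a faithful reconstruction of the same argument.
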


\begin{proof}
    See Appendix \ref{sec:genSOC}.
\end{proof}

We stress that a general formulation would involve a more general dependence of drift and diffusion coefficients on the control, or explicit time-dependence, but we confine ourselves to an SDE of the form (\ref{sde+u}) because it serves our purposes and a more general form would come with stringent regularity assumptions (especially when the diffusion coefficient is controlled) that we want to avoid here.

\subsection{Moment generating function}

We define 
\begin{equation}\label{pathfunctional}
    W(X):=\int_{0}^{\tau}f(X_s)ds+ g(X_{\tau})\,.
\end{equation}
and revisit the certainty-equivalent expectation of Lemma \ref{Gibbs} that was based on a logarithmic transformation and the associated inequality
\begin{equation}
-\lambda^{-1}\log\E^x\big[e^{-\lambda W(X)}\big]\le \E^x[W(X)]\,,
\end{equation}
that follows from the convexity of the exponential function.  
To this end, we set $\beta \equiv 0$ and $f(x,a):=f(x)+\frac{1}{2\lambda}|a|^2$ for some $\lambda>0$ in Lemma \ref{generalized_SOC}, which leads to a standard SOC problem that is linear-quadratic in the controls, with a possibly nonlinear state dependence; see \cite[Sec.~VI.5]{fleming2006} for details.  

\begin{definition}[Stochastic optimal control problem no.~1]\label{SOC1}
% Let 
% \begin{equation}	
% 	J_1(x,u)=\E^{x}\left[\int\limits_{0}^{\tau^u} \left(\frac{1}{2\lambda }\vert u_s\vert^2 +f\left(X_s^{u} \right)\right)ds +g\left(X_{\tau^u} \right) \right],
% \end{equation}
Minimize
\begin{equation}
	%\begin{split}
		%\min_{u} 
        J_1(x,u)= %\min_{u}
        \E^{x}\left[\int_{0}^{\tau^u} \left(\frac{1}{2\lambda}\vert u_s\vert^2 +f\left(X_s^{u} \right)\right)ds +g\left(X_{\tau^u} \right) \right]%\\
		%&\text{s.t.}~dX^u_s=(b(X^u_s)+\sigma(X^u_s)u_s)ds+\sigma(X^u_s)dB_s,\quad X^u_0=x
	%\end{split}
\end{equation}
subject to 
\begin{equation}
	%\begin{split}
		%&\min_{u} J_1(x,u)=\min_{u}\E^{x}\left[\int\limits_{0}^{\tau^u} \left(\frac{1}{2}\vert u_s\vert^2 +f\left(X_s^{u} \right)\right)ds +g\left(X_{\tau^u} \right) \right]\\
		%&\text{s.t.}~
        dX^u_s=(b(X^u_s)+\sigma(X^u_s)u_s)ds+\sigma(X^u_s)dB_s,\quad X^u_0=x
	%\end{split}
\end{equation}
\end{definition}

It can be shown, using properties of conditional expectations, that the corresponding value function $v_1(x):=\min_{u\in\cA} J_1(x,u),\,x\in\overline{D}$ solves the HJB equation (see, e.g.~\cite[Sec.~VI.5]{fleming2006})
\begin{equation}\label{HJB1}
	\begin{cases}
		&\min\limits_{c\in U}\left \{ f+\frac{1}{2\lambda }\vert c\vert^2 +\cL v_1 +\sigma^\top \nabla v_1\cdot  c\right\}=0, \quad x\in D\\ 
		& v_1(x)=g(x),\quad x\in \partial D\,.
	\end{cases}
\end{equation}
Using that the unique minimizer given by
\begin{equation}\label{OC1}
c_1^*=-\lambda \sigma^\top \nabla v_1, 
\end{equation}
the HJB equation (\ref{HJB1}) is equivalent to  
\begin{equation}\label{PDE1}
	\begin{cases}
		& f+\cL v_1 -\frac{\lambda}{2}\left|\sigma^\top \nabla  v_1\right|^2 =0, \quad x\in D\\ 
		& v_1(x)=g(x),\quad x\in \partial D.
	\end{cases}
\end{equation}

We will argue that the probability measure $P^{u^*}$ that is induced by the optimal control $u^*_s= c_1^*\left(X^{u^*}_s \right)$ with $c_1^*$ as given by (\ref{OC1}), agrees with the probability measure $\Qs$ from Lemma \ref{Gibbs} that is given by 
\begin{equation}
\frac{d\Qs}{dP}=\frac{e^{-\lambda S}}{\E\left[ e^{-\lambda S}\right]},
\end{equation}
with $S=W(X)$ as defined by (\ref{pathfunctional}). This connection is expressed by the next Lemma that is the stopping time analogue of the famous Bou\'e-Dupuis formula~\cite{BD98}; see also \cite{daipra1996,anum}.

\begin{lemma}[Value function and Gibbs variational principle]\label{lem:socGibbs}
	Recall $\frac{dP^u}{dP}=\mathscr{L}_\infty$ and let
    \begin{align*}
		J_1(x,u)=\E^x\!\left[W(X^u)+\lambda^{-1}\log\mathscr{L}_\tau(X^u)\right]
	\end{align*}
    where we use the notation $\mathscr{L}_\tau(X^u)$ to indicate that $u_s= \underline{u}(X_s^u)$ is a feedback control and $\tau=\tau^u$ is a hitting time for $X^u$. Equivalently,  
\begin{align*}
		J_1(x,u)=\E^x_{P^u}\!\left[W (X)+\lambda^{-1}\log\mathscr{L}_\tau(X)\right],
	\end{align*}
Then 
	\begin{equation}
		v_1(x)=	- \lambda^{-1}\log \E^x\!\left[e^{-\lambda W(X)}\right] ,
	\end{equation}
	and	the optimal change of measure is given by
	\begin{equation}
		\frac{dP^{u^*}}{dP}=\frac{e^{-\lambda W(X)}}{\E^x\!\left[e^{-\lambda W(X)}\right]}.
	\end{equation}
    \end{lemma}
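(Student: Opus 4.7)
The plan is to establish the two claimed identities by combining a Girsanov rewrite of the cost $J_1$ with the Gibbs variational principle (Lemma \ref{Gibbs}), and then to verify that the infimum is attained by the HJB feedback \eqref{OC1} via an Itô computation on $-\lambda v_1(X_s)$.

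First I would justify the two equivalent forms of $J_1$ stated in the lemma. Under $P^u$ the process $B^u_s = B_s - \int_0^s u_r\,dr$ is a Brownian motion (Theorem \ref{thm:girsanov}), so
\[
\log \mathscr{L}_\tau = \int_0^\tau u_s\,dB_s^u + \tfrac{1}{2}\int_0^\tau |u_s|^2\,ds .
\]
Under an admissibility assumption ensuring that $\int_0^\cdot u_s\,dB^u_s$ is a true $P^u$-martingale up to $\tau$, this yields $\E^x_{P^u}[\lambda^{-1}\log\mathscr{L}_\tau] = (2\lambda)^{-1}\E^x_{P^u}[\int_0^\tau |u_s|^2 ds]$. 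Combined with Remark \ref{rem:reweighting}, which identifies the $P^u$-law of $X$ with the $P$-law of $X^u$, this rewrites the control cost as
\[
J_1(x,u) = \E^x_{P^u}\!\left[W(X) + \lambda^{-1}\log\mathscr{L}_\tau(X)\right].
\]
Because $W(X)$ is $\mathcal{F}_\tau$-measurable, the added term is exactly $\lambda^{-1}D(P^u|_{\mathcal{F}_\tau}\,\|\,P|_{\mathcal{F}_\tau})$. Applying Lemma \ref{Gibbs} to $S=W(X)$ on $(\Omega,\mathcal{F}_\tau,P|_{\mathcal{F}_\tau})$ then gives the lower bound
\[
\inf_u J_1(x,u) \ge -\lambda^{-1}\log \E^x[e^{-\lambda W(X)}],
\]
with equality iff $dP^u/dP|_{\mathcal{F}_\tau} \propto e^{-\lambda W(X)}$.

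It remains to show that this minimum is attained within the class of admissible feedback controls and that the minimizer coincides with $u_s^\ast = -\lambda\sigma^\top\nabla v_1(X_s^{u^\ast})$ from \eqref{OC1}. For this I would apply Itô to $-\lambda v_1(X_s)$, use the PDE \eqref{PDE1} to eliminate $\cL v_1$, and read off
\[
-\lambda\,dv_1(X_s) = -\tfrac{1}{2}|u_s^\ast|^2\,ds + \lambda f(X_s)\,ds + u_s^\ast\cdot dB_s .
\]
Integrating from $0$ to $\tau$ and applying the boundary condition $v_1=g$ on $\partial D$ gives
\[
\int_0^\tau u_s^\ast\,dB_s - \tfrac{1}{2}\int_0^\tau|u_s^\ast|^2\,ds = \lambda v_1(x) - \lambda W(X),
\]
i.e.\ $\mathscr{L}_\tau^\ast = e^{\lambda v_1(x)}e^{-\lambda W(X)}$. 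Since $\mathscr{L}^\ast$ is a $P$-martingale, taking $P$-expectations yields $e^{-\lambda v_1(x)} = \E^x[e^{-\lambda W(X)}]$, which is the first asserted identity, and substituting back gives $dP^{u^\ast}/dP = e^{-\lambda W(X)}/\E^x[e^{-\lambda W(X)}]$, matching $\Qs$ from Lemma \ref{Gibbs}.

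The main obstacle is the unbounded stopping time $\tau$: every step above requires genuine (not merely local) integrability at $\tau$. Concretely, one needs $\mathscr{L}$ (and $\mathscr{L}^\ast$) to be uniformly integrable martingales on $[0,\tau]$, the $P^u$-stochastic integral $\int_0^\cdot u_s\,dB_s^u$ to have zero expectation at time $\tau$, and $W(X)$ to be suitably integrable under both $P$ and $P^u$ so that the Gibbs formula produces a finite value. These conditions should be folded into the admissibility class $\cA$, with $\E^x_{P^u}[\int_0^\tau |u_s|^2\,ds]<\infty$ (and hence $\E^x_{P^u}[\tau]<\infty$ whenever $u$ is bounded below away from zero) being the natural sufficient hypothesis.
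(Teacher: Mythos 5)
Your argument is correct and reaches the same conclusion, but you take a somewhat different route in the attainment step that is worth comparing to the paper's. Both proofs use the Gibbs/Jensen lower bound $J_1(x,u)\ge -\lambda^{-1}\log\E^x[e^{-\lambda W(X)}]$ via Remark~\ref{rem:reweighting}. The paper then reads off the equality condition of Jensen's inequality directly: equality holds iff $e^{-\lambda W(X)}\mathscr{L}_\tau^{-1}(X)$ is $P^u$-a.s.\ constant, which forces $dP^u/dP \propto e^{-\lambda W(X)}$; the paper does not explicitly check that this change of measure is realized by a feedback control. You instead supply a \emph{verification} argument: applying It\^o to $-\lambda v_1$ along the uncontrolled $X$, substituting the HJB PDE~\eqref{PDE1} and the boundary data, you show that the Girsanov exponent driven by $u^\ast=-\lambda\sigma^\top\nabla v_1$ satisfies $\log\mathscr{L}^\ast_\tau = \lambda v_1(x) - \lambda W(X)$ pathwise; taking $P$-expectations and using the martingale normalization $\E^x[\mathscr{L}^\ast_\tau]=1$ then yields both identities at once. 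This buys you an explicit link between the optimal measure and the HJB feedback law~\eqref{OC1}, closing a gap the paper leaves to the reader, at the cost of invoking the PDE and requiring the slightly heavier regularity and martingale hypotheses you rightly flag at the end. Your explicit note on uniform integrability of $\mathscr{L}^\ast$ up to the unbounded $\tau$ is exactly the right caveat; the paper's terse proof silently assumes it as well.
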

	\begin{proof}
		By Jensen's equality and Remark \ref{rem:reweighting}, 
		\begin{align*}
			J_1(x,u)&=\E^x_{P^u}\!\left[W (X)+\lambda^{-1}\log\mathscr{L}_\tau(X)\right]\\ 
            & =-\E^x_{P^u}\!\left[\lambda^{-1}\log\left(e^{-\lambda W(X)}\mathscr{L}^{-1}_\tau(X)\right)\right]\\
			&\geq -\lambda^{-1}\log\E^x_{P^u}\!\left[e^{-\lambda W(X)} \mathscr{L}^{-1}_\tau(X)\right]\\
			&=-\lambda^{-1}\log \E^x\!\left[e^{-\lambda W(X)}\right],
		\end{align*}
		thus
		\begin{align*}
			\inf_{u\in\cA} J_1(x,u)=v_1(x)\geq -\lambda^{-1}\log \E^x\!\left[e^{-\lambda W(X)}\right], 
		\end{align*}
		with equality iff
		\begin{align*}
			e^{-\lambda W(X)} \mathscr{L}^{-1}_\tau(X) =	\E^x_{P^u}\!\left[e^{-\lambda W(X)} \mathscr{L}^{-1}_\tau(X)\right]=\E^x\!\left[e^{-\lambda W(X)} \right],\quad P^u\text{-a.s.}\,.
		\end{align*}
		The first equality holds iff $P^u=P^{u^*}$, with 
		\begin{align}
			\frac{dP^{u^*}}{dP}=\frac{e^{-\lambda W(X)}}{\E^x\!\left[e^{-\lambda W(X)}\right]},\quad \quad P^{u^*}-\text{a.s.}
		\end{align}
		In this case, $v_1(x)=-\lambda^{-1}\log\E^x\!\left[e^{-\lambda W(X)}\right]$.
		% \begin{align*}
		% 	v_1(x)=	-\log \E^x\!\left[e^{-W(X)}\right].
		% \end{align*}
	\end{proof}

Lemma \ref{lem:socGibbs} shows that the certainty-equivalence principle of Lemma \ref{Gibbs} has a straightforward formulation as a stochastic optimal control problem.  

\subsection{Second moment minimization}

We now come to the certainty-equivalent expectation of Definition \ref{def:squareroottransform} that was based on a square root transformation and the associated inequality
\begin{equation}
\sqrt{\E^x\big[W(X)^2\big]} \ge \E^x[W(X)]\,.
\end{equation}
To this end, we set $\beta(X^u_s, u_s):=-u_s^2$ and $f\equiv 0$ in Lemma \ref{generalized_SOC}, which leads to the following risk-sensitive SOC problem; see \cite{nagai1996bellman} for details.

\begin{definition}[Stochastic optimal control problem no.~2]\label{SOC2}
	Minimize
	\begin{equation}\label{quadraticcost}
		J_2(x,u):=\E^{x}\left[ g^2\left(Y^u_{\tau} \right) \exp\left(\int_{0}^{\tau}u^2_s ds\right) \right]
	\end{equation}
	subject to the controlled SDE
	\begin{equation}
		dY^u_s=\left(b(Y^u_s)-\sigma(Y^u_s)u_s\right)ds+ \sigma(Y^u_s)dW_s,\quad Y^u_0=x\,.
	\end{equation}	
    Here and in what follows, we write $\tau=\tilde{\tau}^u$ for the stopping time under the controlled process $Y^u$.
\end{definition}
    
It follows from the dynamic programming principle (e.g.~\cite{nagai1996bellman}) that the value function
	\begin{equation}
		v_2(x)=\min_{u\in\cA} J_2(x,u)
	\end{equation}
solves the risk-sensitive HJB equation
	\begin{equation}\label{HJB_2}
		\begin{cases}
			&\min\limits_{c\in U} \{\vert c\vert^2 v +(b-\sigma c)\cdot \nabla  v +\frac{1}{2} \sigma \sigma^\top \colon\nabla^2 v\}=0,\quad x\in D\\
			& v(x)=g^2(x),\quad x\in \partial D\,.
		\end{cases}
	\end{equation}
The optimal control is given by $u^*_s = c_2^*(Y_s^{u^*})$ where  
\begin{equation}\label{eq:c2star}
c_2^*(x)=\frac{\sigma^\top \nabla v_2(x)}{2v_2(x)}=\frac{1}{2} \sigma^\top \nabla \log v_2(x).
\end{equation}
is the unique minimizer in (\ref{HJB_2}). As a consequence, the HJB equation can be recast as the equivalent boundary value problem 
	\begin{equation}\label{PDE_2}
	\begin{cases}
		& \cL v-\frac{1}{4v} \left|\sigma^\top \nabla v\right|^2 =0,\quad x\in D\\
		& v(x)=g^2(x),\quad x\in \partial D
	\end{cases}
\end{equation}
that is well-posed if $v>0$ in the interior of the domain $D$. The next lemma is the stopping time analogue of Lemma 4.3 in \cite{dupuis2012importance} that  connects minimization of the second moment with the risk-sensitive criterion of Definition \ref{SOC2}.

\begin{lemma}\label{quadratic}
It holds  
	\begin{equation}
		\E_{P^u}^x\!\left[W^2(X) \mathscr{L}^{-2}_\tau(X) \right] = \E_{\tilde{P}^u}^x\!\left[W^2(X)\exp\!\left(\int\limits_{0}^{\tau} \vert u_r\vert^2 dr\right)\right] 
	\end{equation}	
    where $\frac{d\tilde{P}^u}{dP}=\exp\!\left(-\frac{1}{2}\int_{0}^{\infty}\vert u_r\vert^2 dr -\int_{0}^{\infty} u_r dB_r\right)$. Equivalently,
    \begin{equation}
		\E_{P^u}^x\!\left[W^2(X) \mathscr{L}^{-2}_\tau(X) \right] =\E^x\!\left[S^2(Y^u)\exp\!\left(\int_{0}^{\tau} \vert \underline{u}(Y_r^u)\vert^2 dr\right) \right],
	\end{equation}	
    with $\tau=\tilde{\tau}^u$ in the rightmost expression denoting the stopping time under $Y^u$.
	\end{lemma}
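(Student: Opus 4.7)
The plan is to push $\E_{P^u}$ back to $\E_P$ via the Girsanov density, collapse $\mathscr{L}_\infty$ to $\mathscr{L}_\tau$ by optional stopping, exploit a direct algebraic identity between $\mathscr{L}_\tau^{-1}$ and the density of $\tilde{P}^u$, and then undo the change of measure. The second form of the identity will then follow at once from Girsanov applied to the control $-u$.

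Since $W^2(X)\mathscr{L}_\tau^{-2}(X)$ is $\mathcal{F}_\tau$-measurable and $dP^u/dP=\mathscr{L}_\infty$, conditioning on $\mathcal{F}_\tau$ gives
\begin{equation*}
\E^x_{P^u}[W^2(X)\mathscr{L}_\tau^{-2}(X)] = \E^x\!\left[W^2(X)\mathscr{L}_\tau^{-2}(X)\mathscr{L}_\infty(X)\right] = \E^x[W^2(X)\mathscr{L}_\tau^{-1}(X)],
\end{equation*}
the second equality using the tower property and the optional stopping identity $\E^x[\mathscr{L}_\infty\mid\mathcal{F}_\tau]=\mathscr{L}_\tau$, valid under the uniform integrability assumption of Theorem~\ref{thm:girsanov}.

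Next, a direct comparison of stochastic exponentials yields the factorization
\begin{equation*}
\mathscr{L}_\tau^{-1}(X) = \exp\!\Big(\textstyle\int_0^\tau|u_r|^2\,dr\Big)\,\tilde{\mathscr{L}}_\tau(X),\qquad \tilde{\mathscr{L}}_t := \exp\!\Big(-\tfrac{1}{2}\textstyle\int_0^t|u_r|^2\,dr - \int_0^t u_r\,dB_r\Big),
\end{equation*}
and by construction $d\tilde{P}^u/dP = \tilde{\mathscr{L}}_\infty$. Running the tower/optional-stopping step in reverse, this time with $\tilde{\mathscr{L}}$ in place of $\mathscr{L}$, produces
\begin{equation*}
\E^x[W^2(X)\mathscr{L}_\tau^{-1}(X)] = \E^x\!\Big[W^2(X)\exp\!\Big(\textstyle\int_0^\tau|u_r|^2\,dr\Big)\tilde{\mathscr{L}}_\infty(X)\Big] = \E^x_{\tilde{P}^u}\!\Big[W^2(X)\exp\!\Big(\textstyle\int_0^\tau|u_r|^2\,dr\Big)\Big],
\end{equation*}
which is the first claimed identity.

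For the equivalent second form, Theorem~\ref{thm:girsanov} applied with control $-u$ says that $\tilde{B}_t := B_t + \int_0^t u_s\,ds$ is a Brownian motion under $\tilde{P}^u$, and $X$ solves $dX_s = (b(X_s)-\sigma(X_s)\underline{u}(X_s))\,ds + \sigma(X_s)\,d\tilde{B}_s$, which is precisely the defining SDE of $Y^u$. Hence the $\tilde{P}^u$-law of $(X,\tau^u(X))$ equals the $P$-law of $(Y^u,\tilde{\tau}^u(Y^u))$, and feeding this back into the previous display produces the stated expression. The only delicate point is the uniform integrability needed to justify optional stopping for both $\mathscr{L}$ and $\tilde{\mathscr{L}}$: for $\mathscr{L}$ this is the standing assumption of Theorem~\ref{thm:girsanov}, and for $\tilde{\mathscr{L}}$ it is the same assumption applied to $-u$, which is implicit in the admissibility of the class of controls under consideration.
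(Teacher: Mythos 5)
Your proof is correct and follows essentially the same route as the paper's: both hinge on the Radon--Nikodym algebra linking $\mathscr{L}^{-1}$ and the density of $\tilde{P}^u$, giving the factor $\exp\!\big(\int_0^\tau|u_r|^2\,dr\big)$, followed by the Girsanov identification of the $\tilde{P}^u$-law of $X$ with the $P$-law of $Y^u$. The only real difference is presentational: the paper passes directly from $P^u$ to $\tilde{P}^u$ via the chain rule with $\mathscr{L}_\infty$ throughout, leaving the $\tau$-versus-$\infty$ reduction implicit (it is covered by Remark~\ref{rem:reweighting}), whereas you route through the reference measure $P$ and make both optional-stopping steps explicit, which is a slightly more careful accounting of the same argument.
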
 
    
	\begin{proof}
		By Girsanov's theorem,  the $P$-law of $X^u$ is the $P^u$-law of $X$ and the $P$-law of $Y^u$ is the $\tilde{P}^u$-law of $X$, where
		\begin{equation}
				\frac{dP^u}{dP}=\exp\!\left(-\frac{1}{2}\int_{0}^{\infty}\vert u_r\vert^2 dr +\int_{0}^{\infty} u_r dB_r\right)
		\end{equation}
        and 
        \begin{equation}
				\frac{d\tilde{P}^u}{dP}=\exp\!\left(-\frac{1}{2}\int_{0}^{\infty}\vert u_r\vert^2 dr -\int_{0}^{\infty} u_r dB_r\right).
		\end{equation}
        Then, using the Radon-Nikodym chain rule with $\mathscr{L}^{-1}_\infty=\frac{dP}{dP^u}$, 
		\begin{align*}
				\E_{P^u}^x\!\left[W^2(X) \mathscr{L}^{-2}_\infty(X) \right] & =\E_{\tilde{P}^u}^x\!\left[W^2(X) \mathscr{L}^{-1}_\infty(X)\frac{dP}{d\tilde{P}^u}(X) \right] \\ 
                & =\E^x\! \left[W^2(Y^u)\exp\!\left(\int_{0}^{\tau} \vert \underline{u}(Y_r^u)\vert^2 dr\right) \right].
		\end{align*}	
        which proves the assertion. 
	\end{proof}
    Lemma \ref{quadratic} establishes a relation between minimization of the second moment and the risk-sensitive criterion of Definition \ref{SOC2}. 
    As a consequence of Lemma \ref{lemma_2ndMoment} and \cite[Thm.~4]{awad2013}, the minimum in Definition \ref{SOC2} is attained under the optimal control $u^*$ and the dynamics  (\ref{sde-u}), such that 
    \begin{align*}
    \E^x[g(X)]^2 =v_2(x) = \E^{x}\left[g^2\left(Y^{u^*}_{\tau} \right) \exp\left(\int_{0}^{\tau}|u^*_s|^2 ds\right) \right].
    \end{align*}

\subsection{Transformations of boundary value problems}

Assuming sufficient regularity, the nonlinear dynamic programming equations (\ref{HJB1}) and (\ref{HJB_2}) associated with the SOC problems of Definitions \ref{SOC1} and \ref{SOC2} have a direct interpretation in terms of nonlinear transformations of a linear boundary value problem. To see this, let $\Psi\in C^2(D)\cap C(\overline{D})$ be the classical and nonnegative solution of the linear boundary value problem 
\begin{equation}\label{linBVP}
		\begin{cases}
			&\cL \Psi-F\Psi =-H,\quad \text{in}~D\\
			& \Psi= G,\quad\text{on}~\partial D,
		\end{cases}
	\end{equation}
for some sufficiently regular functions $F,G,H\ge 0$ where $\cL$ is the second-order differential operator associated with the uncontrolled SDE (\ref{sde0}). 

The Feynman-Kac Theorem (see, e.g.~\cite[Thm.~1.3.17]{pham2009continuous} or \cite[Ex.~9.12]{oeksendal2003}), the solution to (\ref{linBVP}) is of the form
\begin{equation}
	\Psi(x) = \E^x\!\left[e^{-\int_0^\tau F(X_s)}G(X_\tau) + \int_0^\tau e^{-\int_0^r F(X_s)}H(X_r)\,dr\right].
\end{equation}

Our aim now is to replace $\Psi$ by one of its certainty-equivalent expressions and derive the corresponding boundary value problem, following the line of thought of Section \ref{sec:zerovar}. To this end,  let $\cL^a \Phi=\cL \Phi + a\cdot \sigma^\top \nabla \Phi$ for some $a\in \R^k$, and consider a smooth function $v=v(x)$ that is related to $\Psi=\Psi(x)$ by 
\begin{equation}
\Psi(x):=(\tg\circ v)(x)
\end{equation}
for some invertible transformation $\tg$. The next Lemma explains how the transformation $\tg$ is related to a transformation between uncontrolled dynamics with generator $\cL$ and controlled dynamics with generator $\cL^a$.    

\begin{lemma}\label{lem:transform}
Let $\Psi=\tg\circ v$ for an invertible mapping $\tg$, such that \begin{equation}\frac{\tg''\circ v}{\tg' \circ v}<0\,, \quad\tg'\neq 0\,.\end{equation} 
Then
	\begin{align}
	\cL \Psi =\left(\tg' \circ v \right) \cdot
	\min\limits_{a\in  \R^k} \left\{ \cL^a v-\frac{\tg' \circ v}{2 \tg'' \circ v}\vert a \vert^2  \right\}.	
	\end{align}
\end{lemma}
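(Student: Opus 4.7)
The plan is to apply the chain rule to $\cL(\tg \circ v)$ directly, then recognize the resulting expression as the value attained by completing a square, with the minimization over $a\in\R^k$ arising from undoing that completion.

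First, I would invoke the chain rule for $\Psi = \tg \circ v$: specifically, $\nabla \Psi = (\tg'\circ v)\nabla v$ and $\nabla^2 \Psi = (\tg''\circ v)\,\nabla v\,\nabla v^\top + (\tg'\circ v)\nabla^2 v$. Substituting into $\cL = \tfrac{1}{2}\sigma\sigma^\top\colon\nabla^2 + b\cdot\nabla$ and factoring out $\tg'\circ v$, which is nonzero by hypothesis, yields
\[
\cL \Psi = (\tg'\circ v)\left[\cL v + \tfrac{1}{2}\tfrac{\tg''\circ v}{\tg'\circ v}\,|\sigma^\top\nabla v|^2\right].
\]

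Next, I would observe that the sign hypothesis $\tg''\circ v/\tg'\circ v < 0$ is equivalent to $\gamma := -\tg'\circ v/(2\tg''\circ v) > 0$, so that
\[
a \mapsto \cL^a v - \tfrac{\tg'\circ v}{2\tg''\circ v}\,|a|^2 = \cL v + a\cdot\sigma^\top\nabla v + \gamma|a|^2
\]
is a strictly convex quadratic on $\R^k$. Setting the gradient in $a$ to zero gives the unique minimizer $a^* = (\tg''\circ v/\tg'\circ v)\,\sigma^\top\nabla v$, and substituting back shows, after the cancellation $\gamma\cdot(\tg''\circ v/\tg'\circ v) = -1/2$, that the minimum value equals precisely the bracketed quantity in the previous display. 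Multiplying by $\tg'\circ v$ then produces the asserted identity.

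I do not anticipate a genuine obstacle: the argument is the chain rule followed by completing the square. The one point deserving care is sign tracking, since the hypothesis $\tg''/\tg' < 0$ is exactly what makes the quadratic in $a$ strictly \emph{convex} rather than concave, thereby ensuring that the minimum is attained at a finite minimizer rather than being $-\infty$. No regularity beyond $v\in C^2$ and $\tg\in C^2$ enters the calculation, so the statement holds pointwise wherever the chain rule applies.
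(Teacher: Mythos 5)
Your proposal is correct and takes essentially the same approach as the paper: chain rule for $\cL(\tg\circ v)$, factor out $\tg'\circ v$, and identify the remaining quadratic term as the minimum of the strictly convex quadratic in $a$ (the sign hypothesis $\tg''/\tg'<0$ ensuring convexity so the minimum is finite and attained at the interior point $a^*$). The only stylistic difference is that you compute the minimizer and substitute it back, whereas the paper inserts a vanishing $\min_a$ of a completed square and expands; the two are algebraically the same argument.
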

    \begin{proof} By chain rule, 
    \begin{align*}
	\cL \Psi = & \left(\tg' \circ v\right) \cL v+\frac{1}{2} \left(\tg'' \circ v \right) \vert \sigma^\top \nabla v\vert^2\\
	= & \left(\tg' \circ v \right) \cdot 
	\left(\cL v +\frac{1}{2}\frac{\tg''\circ v}{\tg' \circ v}\vert\sigma^\top \nabla v\vert^2 \ldots \right.\\ 
    & \left. + \min\limits_{a\in  \R^k} \left\{-\frac{1}{2}\frac{\tg'\circ v}{\tg'' \circ v} \left|a-\frac{\tg''\circ v}{\tg' \circ v}\sigma^\top \nabla v\right|^2 \right\}		\right)\\
	= & \left(\tg' \circ v \right) \cdot \left(
	\cL v +\min\limits_{a\in  \R^k} \left\{- \frac{\vert a\vert^2 }{2 }\cdot\frac{\tg' \circ v}{\tg'' \circ v}+a\cdot \sigma^\top \nabla v\right\}\right)\\
	= & \left(\tg' \circ v \right) \cdot
	\min\limits_{a\in  \R^k} \left\{ \cL^a v-\frac{\tg' \circ v}{2 \tg'' \circ v}\vert a \vert^2  \right\}.	
	\end{align*}
\end{proof}

For $H\equiv 0$, the PDE in (\ref{linBVP}) transforms according to 
		\begin{align*}
			%\begin{split}
			\cL \Psi - F\Psi &= \left(\tg' \circ v\right) \left(\cL v \right)+\frac{1}{2} \left(\tg'' \circ v \right) \vert \sigma^\top \nabla v\vert^2 -F (\tg \circ v)\\
			&=\left(\tg' \circ v\right) \left(\cL v+\frac{\tg'' \circ v}{2\tg' \circ v}\vert \sigma^\top \nabla v\vert^2  - F~ \frac{\tg \circ v}{\tg' \circ v} \right)\\
			&=\cL v+\frac{\tg'' \circ v}{2\tg' \circ v}\vert \sigma^\top \nabla v\vert^2  -F~ \frac{\tg \circ v}{\tg' \circ v}\,.
		\end{align*}
We mention two relevant special cases that correspond to the control problems of Definitions \ref{SOC1} and \ref{SOC2}:

\begin{corollary}[Logarithmic transformation]\label{cor:logtransform} Let $\tg(x)=e^{-\lambda x}$ and define
\begin{equation}
\Psi(x) = \E^x\!\left[\exp\!\left(-\lambda
\left(\int_0^\tau f(X_s)dt + g(X_\tau)\right)\right)\right]\,.
\end{equation}
Then $\Psi$ is the solution of (\ref{linBVP}) with $H=0$, $F=\lambda f$, and $G=e^{-\lambda g}$ if and only if the log transformed function  $v=\tg^{-1}\circ\Psi$ 
% \begin{equation}
% V(x)=(\tg^{-1}\circ\Psi)(x) = -\lambda\log\Psi(x)
% \end{equation}
solves the dynamic programming equation (\ref{HJB1}) or, equivalently, (\ref{PDE1}). In other words,
\begin{equation}
v_1(x) = -\lambda\log\Psi(x)\,.
\end{equation}
\end{corollary}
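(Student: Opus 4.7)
The claim has two directions of equivalence plus the explicit formula $v_1(x)=-\lambda^{-1}\log\Psi(x)$ (the exponent in the statement reads $-\lambda$, but since $\tg^{-1}(y)=-\lambda^{-1}\log y$, I would read this as $-\lambda^{-1}$). The plan is to chain together three ingredients that are already available: Feynman--Kac for the linear side, Lemma \ref{lem:transform} for the transformation, and Lemma \ref{lem:socGibbs} (or uniqueness of the HJB solution) for the probabilistic identification.

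First, I would invoke the Feynman--Kac representation cited just above to identify $\Psi$ as the classical solution of (\ref{linBVP}) with $F=\lambda f$, $G=e^{-\lambda g}$ and $H\equiv 0$, assuming the standing regularity so that the expectation yields a $C^2(D)\cap C(\overline{D})$ function. This disposes of one half of the ``if and only if.'' Next, with $\tg(y)=e^{-\lambda y}$ one has $\tg'(y)=-\lambda e^{-\lambda y}$, $\tg''(y)=\lambda^2 e^{-\lambda y}$, so
\begin{equation*}
\frac{\tg''\circ v}{\tg'\circ v}=-\lambda<0,\qquad -\frac{\tg'\circ v}{2\tg''\circ v}=\frac{1}{2\lambda},
\end{equation*}
which verifies the hypotheses of Lemma \ref{lem:transform}. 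Applying that lemma and using $F\Psi=\lambda f\,e^{-\lambda v}$, the PDE $\cL\Psi-F\Psi=0$ becomes, after dividing through by $-\lambda e^{-\lambda v}\neq 0$,
\begin{equation*}
\min_{a\in\R^k}\Bigl\{\cL v+\sigma^{\top}\nabla v\cdot a+\tfrac{1}{2\lambda}|a|^{2}\Bigr\}+f=0\quad\text{in }D,
\end{equation*}
which is exactly the HJB equation (\ref{HJB1}); performing the pointwise minimization in $a$ and substituting $c_1^*=-\lambda\sigma^\top\nabla v$ gives the equivalent form (\ref{PDE1}). Conversely, starting from a classical $v$ solving (\ref{HJB1}), the same algebra run backwards shows that $\Psi=e^{-\lambda v}$ solves the linear problem.

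For the boundary data, on $\partial D$ we have $\Psi=e^{-\lambda g}$, hence $v=-\lambda^{-1}\log\Psi=g$, matching (\ref{HJB1}). Finally, to identify $v$ with the value function $v_1$, I would appeal to Lemma \ref{lem:socGibbs}, which already gives $v_1(x)=-\lambda^{-1}\log\E^x[e^{-\lambda W(X)}]=-\lambda^{-1}\log\Psi(x)$; alternatively, one can invoke a standard uniqueness/verification argument for the semilinear HJB equation (\ref{PDE1}) in $C^2(D)\cap C(\overline{D})$ together with the bounded controls induced by $c_1^*$.

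There is no serious obstacle here: the whole argument is a direct application of Lemma \ref{lem:transform} to the specific convex transform $\tg(y)=e^{-\lambda y}$, and the only point that merits a line of care is checking the sign condition $\tg''/\tg'<0$ (which forces $\lambda>0$, consistent with Section \ref{sec:zerovar}) and dividing by the nonvanishing factor $\tg'\circ v=-\lambda e^{-\lambda v}$. Everything else is bookkeeping plus a pointer to Feynman--Kac and Lemma \ref{lem:socGibbs}.
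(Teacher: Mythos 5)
Your proof follows essentially the same route as the paper's: Feynman--Kac to realize $\Psi$ as a classical solution of \eqref{linBVP}, Lemma \ref{lem:transform} with $\tg(x)=e^{-\lambda x}$ to translate the linear PDE into the nonlinear one, a boundary-condition check, and a final appeal to Lemma \ref{lem:socGibbs} to identify $v=-\lambda^{-1}\log\Psi$ with the value function $v_1$. You are actually slightly more careful than the paper in one place: you explicitly divide by the nonvanishing prefactor $\tg'\circ v=-\lambda e^{-\lambda v}$ before asserting the equivalence of the two PDEs (the paper silently drops this factor when writing $\cL\Psi-\lambda f\Psi=\cL v-\tfrac{\lambda}{2}|\sigma^\top\nabla v|^2+f$, which should be read as an equivalence of the two equations rather than a literal identity of the left- and right-hand sides). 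You also correctly flag the typo in the displayed formula of the statement: with $\tg^{-1}(y)=-\lambda^{-1}\log y$ the conclusion should read $v_1(x)=-\lambda^{-1}\log\Psi(x)$, as indeed Lemma \ref{lem:socGibbs} gives, and the paper's own proof repeats the same typo in its final line.
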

\begin{proof}
By the Feynman-Kac Theorem (cf.~\cite[Thm.~1.3.17]{pham2009continuous} and \cite[Ex.~9.12]{oeksendal2003}, the function $\Psi$ solves the linear boundary value problem (\ref{linBVP}) with $H=0$, $F=\lambda f$, $G=e^{-\lambda g}$. Lemma \ref{lem:transform} with $\tg(x)=e^{-\lambda x}$ implies that 
		\begin{align*}
			\cL \Psi - \lambda f\Psi =\cL v-\frac{\lambda}{2}\vert \sigma^\top \nabla v\vert^2  + f\,,
		\end{align*}
        where the right-hand side equals the first line of the dynamic programming equation (\ref{PDE1}). For $x\in\partial D$, the function $v$ satisfies the boundary condition $v=g$, which shows that (\ref{linBVP}) is indeed equivalent to (\ref{PDE1}). As a consequence,   $-\lambda\log\Psi(x)=\min_u J_1(x,u)$ is the asserted value function.    
\end{proof}

\begin{corollary}[Quadratic transformation]\label{cor:quadtransform}
    Let $\tg(x)=\sqrt{x}$ and define 
   \begin{equation}
\Psi(x) = \E^x\!\left[g(X_\tau)\right]\,.
\end{equation}
for $g\ge 0$. Then $\Psi$ solves (\ref{linBVP}) with $F=H=0$ and $G=g$ if and only if $v=\tg^{-1}\circ\Psi$ is the solution to (\ref{HJB_2}) or, equivalently, (\ref{PDE_2}). Consequently, 
\begin{equation}
v_2(x) = \big(\E^x[g(X_\tau)]\big)^2\,.
\end{equation}
\end{corollary}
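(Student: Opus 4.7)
The plan is to mirror the argument for Corollary \ref{cor:logtransform}, replacing the logarithmic transform with the square root. First I would invoke the Feynman-Kac theorem with $F=H=0$ and $G=g$ to identify $\Psi(x)=\E^x[g(X_\tau)]$ as a classical nonnegative solution of the linear boundary value problem (\ref{linBVP}). Since $g\ge 0$ and the killing rate vanishes, $\Psi\ge 0$ everywhere, and I would assume $\Psi>0$ in the interior of $D$ so that the square root transformation is well defined there.

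Next I would verify the hypotheses of Lemma \ref{lem:transform} for $\tg(x)=\sqrt{x}$. A direct computation gives $\tg'(x)=\tfrac{1}{2\sqrt{x}}\neq 0$ and $\tfrac{\tg''(x)}{\tg'(x)}=-\tfrac{1}{2x}<0$ on $\{x>0\}$. Setting $v:=\tg^{-1}\circ\Psi=\Psi^2$ and substituting $F=H=0$ into the transformation identity displayed just above Corollary \ref{cor:logtransform} yields
\begin{equation*}
0 = \cL\Psi = \cL v - \frac{1}{4v}|\sigma^\top \nabla v|^2,
\end{equation*}
which is precisely the interior equation of (\ref{PDE_2}); the boundary condition $\Psi=g$ on $\partial D$ translates into $v=g^2$ on $\partial D$. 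The converse direction is the same computation run backwards, using that $\tg$ is invertible on $\{v>0\}$.

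To pass to the HJB form (\ref{HJB_2}), I would use the min representation of Lemma \ref{lem:transform}. A short calculation gives $-\tfrac{\tg'\circ v}{2(\tg''\circ v)}=v$, so the equation $\cL\Psi=0$ is equivalent to $\min_{a\in\R^k}\{|a|^2 v + \cL^a v\}=0$, which matches (\ref{HJB_2}) after the harmless sign change $a=-c$, and the unique minimizer reproduces the optimal feedback (\ref{eq:c2star}). Combining these two equivalences yields $v=v_2$, hence $v_2(x)=\Psi(x)^2=(\E^x[g(X_\tau)])^2$, as claimed.

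The main obstacle is the positivity requirement $\Psi>0$ in the interior of $D$, which is needed both so that $\tg^{-1}$ is smooth at $\Psi$ and so that the singular nonlinearity $\tfrac{1}{4v}|\sigma^\top \nabla v|^2$ in (\ref{PDE_2}) is well defined; this caveat was already flagged after (\ref{PDE_2}). A natural sufficient condition is non-degeneracy of $\sigma\sigma^\top$ together with reachability of $\{g>0\}\cap\partial D$ from every $x\in D$ with positive probability. Beyond ensuring this, the proof is a direct specialization of Lemma \ref{lem:transform}, parallel in structure to Corollary \ref{cor:logtransform}.
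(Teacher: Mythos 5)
Your proposal is correct and follows essentially the same route as the paper: invoke Feynman--Kac to put $\Psi=\E^x[g(X_\tau)]$ in the frame of~(\ref{linBVP}), apply Lemma~\ref{lem:transform} with $\tg(x)=\sqrt{x}$, and read off~(\ref{HJB_2}) and~(\ref{PDE_2}). Spelling out the coefficient computation ($\tg''/\tg'=-1/(2x)$, $-\tg'/(2\tg'')=v$) and the sign change $a=-c$ needed to match~(\ref{HJB_2}) is a useful addition, as is flagging that $\Psi>0$ in the interior is required for the transform and the singular term to make sense. The one place you diverge is the final identification $v_2=\Psi^2$: you close by appealing to the equivalence of the boundary value problems, which tacitly requires uniqueness of the solution to~(\ref{PDE_2}); the paper instead closes probabilistically, arguing via Lemma~\ref{lemma_2ndMoment} that the second moment is minimized exactly at the zero-variance measure, so $\min_u J_2(x,u)=(\E^x[g(X_\tau)])^2$ without invoking PDE uniqueness. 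Either route is acceptable here, but the probabilistic one is more self-contained given that existence/uniqueness for~(\ref{PDE_2}) was only asserted conditionally in the text.
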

\begin{proof}
    The Feynman-Kac Theorem implies that $\Psi$ solves (\ref{linBVP}) with $F=H=0$ and $G=g$, moreover, by Lemma \ref{lem:transform}, 
    \begin{equation}
    \cL\Psi = \cL v-\frac{1}{4v} \left|\sigma^\top \nabla v\right|^2 \,,
    \end{equation}
    where the right-hand side equals the first line of the risk-sensitive HJB equation (\ref{PDE_2}). Together with the boundary condition $v(x)=g^2(x)$ for $x\in\partial D$, this establishes the equivalence between the linear and nonlinear boundary value problems. Since $g\ge 0$, the minimum of the second moment is attained iff the variance is zero, which implies that $\Psi^2(x)=\min_u J_2(x,u)$, as asserted.   	
\end{proof}

\paragraph{Connection to certainty-equivalence}

The transformation $\tg$ plays a similar role as the convex transformation $\varphi$ in Section \ref{sec:zerovar}. 

\begin{itemize}
	\item Clearly, $\tg(x)=e^{-\lambda x}$ is strictly convex and so is its inverse $\tg^{-1}(y)=-\lambda^{-1}\log y$ (because $\tg$ is decreasing). Hence, the choice  $\tg(x)=e^{-\lambda x}$ corresponds to the certainty-equivalence principle of Lemma \ref{Gibbs}, since
\begin{equation}
 V(x) = \tg^{-1}\big(\underbrace{\E^x\!\left[\tg(W(X))\right]}_{\Psi(x)}\big)\,,
\end{equation}
with $W(X)$ as defined by (\ref{pathfunctional}). 
Hence, $\tg$ plays just the role of the convex function $\varphi$ in the certainty-equivalent expectation $\varphi^{-1}(\E[\varphi(W(X))]$.)

\item On the other hand, the function $\tg(x)=\sqrt{x}$ is strictly concave, with strictly convex inverse $\tg^{-1}(y)=y^2$, $y\ge 0$. Using that $\sqrt{g^2}=|g|=g$ for $g\ge 0$, Corollary \ref{cor:quadtransform} can be rephrased as a certainty-equivalence principle for the second moment, with $W(X)=g^2(X_\tau)$ and a strictly concave function:  
\begin{equation}
 V(x) = \tg^{-1}\big(\underbrace{\E^x\!\left[\tg(W(X))\right]}_{\Psi(x)}\big)\,.
\end{equation}
Since 
\begin{equation}
\E^x[g(X_\tau)]=\E^x[\tg(W(X))]\le \tg(\E^x[W(X)])=\sqrt{\E^x[g^2(X_\tau)]}\,,
\end{equation} 
with the rightmost expression (cf.~\cite{daipra1996,anum}) being equal to 
\begin{equation}
\sqrt{\E^x[g^2(X_\tau)]} = \sup_{u\in\cA}\left\{\E^x\!\left[g(X^u_\tau)\,\mathscr{L}_\tau^{-1/2}(X^u)\right]\right\}\,,
\end{equation}
we see that the certainty-equivalence principle involves a \emph{maximization} of $g$ under the controlled dynamics $X^u$ rather than a minimization. Roughly speaking, this means that variance minimization can be achieved by either reducing the second moment or, likewise, by increasing the first moment under the controlled dynamics (which is different from the unbiased importance sampling estimator for the first moment).  
\end{itemize}

To appreciate the difference between the two SOC problems of Definitions \ref{SOC1} and \ref{SOC2}, we consider the quantity of interest 
\begin{equation}
\Psi(x) = P(X_\tau\in C\,|\,X_0=x)
\end{equation}
for some measurable subset $C\subset\partial D$.  We assume that stopping at the target set $C$ is a rare event for most initial values $x\in D$. 
The SOC formulation according to Definition \ref{SOC1} employs $f=0$ and $g=-\log\mathbb{1}_C$, such that  
\begin{equation}
-\log P(X_\tau\in C\,|\,X_0=x) = \min_{u\in\cA}\E^x\!\left[\frac{1}{2}\int_0^\tau |u_s|^2\,ds - \log\mathbb{1}_{C}(X^u_\tau)\right]
\end{equation} 
where for simplicity we have set $\lambda=1$. On the other hand, the SOC problem according to Definition \ref{SOC2} uses $g=\mathbb{1}_C$, which results in
\begin{equation}
P(X_\tau\in C\,|\,X_0=x)^2 = \min_{u\in\cA}\E^x\!\left[\exp\!\left(\int_0^\tau |u_s|^2\,ds + 2\log\mathbb{1}_{C}(Y^u_\tau\right)\right]
\end{equation} 
In both cases, the control $u$ is penalized, but in the first formulation, the control seeks to avoid the non-target set $\partial D\setminus C$, whereas it favors it in the second formulation (with a much stronger penalization of the control though). Both formulations lead to zero variance estimators, but the first formulation results in an importance sampling scheme that increases the likelihood of the rare event, whereas the second formulation does not, despite the sign difference in the controlled dynamics $X^u$ and $Y^u$. (The fact that hitting the target set event becomes even less likely with the second SOC formulation is a simple consequence of the Cauchy-Schwarz inequality.)

\begin{remark}
If all trajectories have a fixed length $T$, the difference between the two formulations may not be strong with regard to their computational complexity. Yet there are cases in which importance sampling estimators that rely on second moment minimization can generate infinitely long trajectories with probability one, despite having zero variance (e.g. when sampling mean first passage times, see \cite{awad2013,anum}). We will revisit this aspect in Section \ref{sec:exit}.  	
\end{remark}

\section{Approximate policy iteration}\label{sec:API}

The efficiency of the importance sampling schemes depends on (1) the  computational cost of sampling and reweighting under the optimal probability measure and (2) the computational cost of computing the optimal change of measure, i.e., the cost of solving the associated optimal control problem. 

In this section, we will devise an iterative scheme for approximating the optimal change of measure by solving the underlying SOC problem. Specifically, we will compute an optimal control by an approximate policy iteration (API) algorithm that, upon convergence, yields an approximation of the optimal control \emph{and} the corresponding value function. As a consequence, API combines the steps (1) and (2) in that it yields a biased approximation of the quantity of interest without an additional reweighting step. While the price to pay is a bias in the estimate of the quantity of interest (because of the nonlinear transformation involved), there is no need to approximate likelihood ratios for possibly very long trajectories that may lead to computational issues, especially when the state space dimension is high (cf.~\cite{agapiou2015importance,suboptimalIS,bengtsson2005curse}).   

The starting point for API is to realize that the solutions to our two stochastic control problems can be written as fixed-point equations on function space. 
For example, for the first formulation that is based on a log transform, the value function can be expressed as a fixed-point equation on the Sobolev space $H^1_0$: 
\begin{equation}
    v_1(x) = \E^{x}\left[\int\limits_{0}^{\tau^*} \left(\frac{\lambda}{2}\vert \sigma^\top\nabla v_1(X_s^*)\vert^2 +f\left(X_s^{*} \right)\right)ds +g\left(X^*_{\tau^*} \right) \right],
\end{equation}
where $X^*$ and $\tau^*$ denote solution and stopping time under the optimal control $u_s^*=-\lambda\sigma^\top\nabla v_1(X_s^*)$.  
We call $J(u):=J_1(-\lambda\sigma^\top u;x)$ and, in a slight abuse of notation, write the fixed point equation associated with (\ref{HJB1}) as 
\begin{equation}
    v_1 = J(\nabla v_1)\,.
\end{equation}
Setting $Q(u):=J_2(\sigma^\top u;x)$, the value function associated with the dynamic programming equation (\ref{HJB2}) satisfies the fixed-point equation 
\begin{equation}
    v_2 = Q\Big(\frac{1}{2}\nabla\log v_2\Big)
\end{equation}

Howard's policy improvement algorithm, that is a variant of API, breaks the fixed point iteration down into two steps, akin to what is done in expectation-maximization algorithms: The first step is a policy evaluation step, in which the cost functional is evaluated for the current control policy. The second step is a policy improvement step, in which the control policy is updated, based on the estimate of the cost functional.   

We formulate API for the two SOC representations of moment generating functions (Section \ref{sec:API1}) and second moments (Section \ref{sec:API2}). %As a test case, we consider the numerical computation of committor functions.  

\subsection{Moment generating function}\label{sec:API1}

Let $(c_k)_{k\ge 1}$ be a sequence of admissible, stationary feedback control policies, and define the cost value associated with the control policy $c_k=c_k(x)$ to be  
\begin{equation}
    J^k(x) = \E^{x}\left[\int\limits_{0}^{\tau^k} \left(\frac{1}{2\lambda }\vert c_k(X^k_t)\vert^2 +f\left(X_s^{k} \right)\right)ds +g\left(X^k_{\tau^k} \right) \right],
\end{equation}
where we use the notation $X^k$ and $\tau^k$ to denote the process and the stopping time under the control policy $c_k$. Now, by It\^o's formula, 
\begin{equation}\label{linHJB1}
    (\cL^k J^k)(x) + \ell(x,c_k) = 0\,,\quad x\in D
\end{equation}
with the shorthand notation $\cL^k=\cL^{c_k}$ and $\ell(x,c)=\frac{1}{2\lambda}|c|^2+f(x)$. The last equation comes with the boundary condition $J^k(x)=g(x)$ for $x\in\partial D$ that is independent of the control $u^k$. We refer to (\ref{linHJB1}) as the linearized HJB equation. Policy iteration solves the above fixed point equation by iteratively solving the linearized HJB equation (\ref{linHJB1}) and updating the control policy. 

\paragraph*{Policy iteration algorithm} We first state the exact form of the algorithm (i.e., without discretization) that involves iteration of policy evaluation and a policy improvement steps. We comment on discretization issues below. 

\begin{algorithm}[H]\label{algo:PI1}
    \caption{Howard's policy improvement algorithm (log transform case)}
    \begin{algorithmic}
    \Require $c_k$, $k\ge 1$
    \While{$\|J^{k+1}-J^k\|_{L^2}>\mathrm{TOL}$}
    \State Solve $\cL^k J^{k} + \ell(\cdot,c_k) = 0$
    \State Update $c_{k+1}\in\argmin_c \{\cL^c J^{k} + \ell(\cdot,c) \}$
    \EndWhile
    \Ensure $v_1\approx J^{k+1}, c^*\approx -\lambda\sigma^\top\nabla J^{k+1}$
    \end{algorithmic}
\end{algorithm}
Note that $c_{k+1}$ in the policy update step can be explicitly computed, since 
\begin{equation}\label{policyUpdate1}
    c_{k+1}\in \argmin_c \left\{(\sigma c)\cdot \nabla J^{k} + \frac{1}{2\lambda }|c|^2 \right\} = -\lambda\sigma^\top\nabla J^{k}
\end{equation}

\paragraph*{Convergence analysis} The convergence of the algorithm follows essentially from standard techniques, similar to the arguments in \cite{puterman1981convergence,chang1986successive}. The key difference here is that we are dealing with control problems on an indefinite time horizon that involve unbounded, but almost surely finite stopping times. It turns out that the key arguments that are used to prove convergence of the algorithm in the finite time case carry over to our situation. 

\begin{lemma}[Monotonicity of the cost]\label{lem:mono1} Let $\cL^k$ be uniformly elliptic for all $k\ge 1$. Under additional regularity conditions that guarantee that (\ref{linHJB1}) has a unique classical solution $J^{k}\in C^2(D)\cap C(\overline{D})$ for all $k\ge 1$, it holds 
\begin{equation}
J^{k+1} \le J^{k}\,,\quad k\in\N\,.
\end{equation}
\end{lemma}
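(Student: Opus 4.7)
The plan is to combine the defining property of the policy-improvement step with a standard comparison argument (maximum principle or, equivalently, a Feynman--Kac representation) for the linearized HJB equation. The ingredients are (i) the linear PDE characterisation of $J^{k}$ and $J^{k+1}$ and (ii) the explicit fact that $c_{k+1}$ is a pointwise minimiser of $c\mapsto \mathcal{L}^{c}J^{k}+\ell(\cdot,c)$.

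First I would use the update rule to compare the Hamiltonians at $J^{k}$. Because $c_{k+1}$ minimises $c\mapsto (\sigma c)\cdot\nabla J^{k}+\tfrac{1}{2\lambda}|c|^{2}$ pointwise, we have
\begin{equation}
    \mathcal{L}^{k+1}J^{k}+\ell(\cdot,c_{k+1})\;\le\;\mathcal{L}^{k}J^{k}+\ell(\cdot,c_{k})\;=\;0\quad\text{in }D,
\end{equation}
while $J^{k+1}$ itself satisfies $\mathcal{L}^{k+1}J^{k+1}+\ell(\cdot,c_{k+1})=0$ in $D$. Subtracting, I get
\begin{equation}
    \mathcal{L}^{k+1}\bigl(J^{k}-J^{k+1}\bigr)\;\le\;0\quad\text{in }D,\qquad J^{k}-J^{k+1}=0\quad\text{on }\partial D,
\end{equation}
using that both $J^{k}$ and $J^{k+1}$ carry the same boundary data $g$.

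Next I would invoke the weak maximum principle for $\mathcal{L}^{k+1}$, which applies because (by hypothesis) $\mathcal{L}^{k+1}$ is uniformly elliptic on the bounded domain $D$ and carries no zeroth-order term, so that a supersolution with zero boundary values is nonnegative. Alternatively, one can give a fully probabilistic proof: since $D$ is bounded and $\mathcal{L}^{k+1}$ is uniformly elliptic, the exit time $\tau^{k+1}$ has finite moments, and It\^o's formula applied to $w:=J^{k}-J^{k+1}\in C^{2}(D)\cap C(\overline{D})$ combined with optional stopping yields
\begin{equation}
    w(x)\;=\;\E^{x}\Bigl[\,w(X^{k+1}_{\tau^{k+1}})-\int_{0}^{\tau^{k+1}}\!\mathcal{L}^{k+1}w(X^{k+1}_{s})\,ds\Bigr]\;\ge\;0,
\end{equation}
since the boundary term vanishes and the integrand is nonnegative by the previous step. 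Either argument delivers $J^{k}\ge J^{k+1}$ in $\overline{D}$.

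The only genuinely delicate point, and the thing I would spell out carefully, is the justification of either the maximum principle or the It\^o/optional-stopping step in the indefinite-time-horizon setting: one must rule out contributions from $\tau^{k+1}=\infty$ and control the local-martingale term up to $\tau^{k+1}$. Uniform ellipticity on the bounded set $D$ gives $\E^{x}[\tau^{k+1}]<\infty$ (in fact exponential moments), which, together with $w\in C(\overline{D})$ bounded and $\sigma^{\top}\nabla w$ bounded on $\overline{D}$, promotes the stochastic integral to a true martingale and legitimises the passage to the limit in the usual localisation argument. Once that is in place, the monotonicity $J^{k+1}\le J^{k}$ follows immediately and the same reasoning can be iterated in $k$.
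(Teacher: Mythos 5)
Your argument is correct and follows essentially the same route as the paper's proof: you use the policy-improvement inequality to deduce $\cL^{k+1}(J^k-J^{k+1})\le 0$ with zero boundary data, and then conclude by Dynkin's formula / optional stopping (the paper's version) or equivalently the weak maximum principle. You also flag the technical point the paper glosses over (integrability of $\tau^{k+1}$ and promotion of the local martingale to a true martingale), which is a welcome addition rather than a divergence.
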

\begin{proof}
We first notice that the policy update step implies that  
\begin{equation}\label{policyMono1}
    \cL^k J^{k} + \ell(\cdot,c_k) \ge  \cL^{k+1} J^{k} + \ell(\cdot,c_{k+1})\,.
\end{equation}
Now let $W := J^{k} - J^{k+1}$. Then, adding and subtracting $\ell(\cdot,c_{k+1})$, 
\begin{align*}
    \cL^{k+1}W & = \cL^{k+1}J^k - \cL^{k+1}J^{k+1}\\
    & = \cL^{k+1}J^k + \ell(\cdot,c_{k+1}) - \left(\cL^{k+1}J^{k+1} + \ell(\cdot,c_{k+1})\right)\\
    & \le \cL^k J^{k} + \ell(\cdot,c_k) \\
    & = 0\,,
\end{align*}
where we have used the linearized HJB equation (\ref{linHJB1}) twice and the monotonicity property (\ref{policyMono1}) in going from line 2 to 3. Integrating $W(X_t^{k+1})$ from $t=0$ to $t=\tau$ and taking expectations, using that the resulting stopped local martingale is in fact a martingale, it follows that 
\begin{equation}
\E^x[W(X^{k+1}_\tau)] = W(x) + \E^x\!\left[\int_0^\tau (\cL^{k+1}W)(X^{k+1}_s)\,ds\right]\,.
\end{equation}
Since both $J^k$ and $J^{k+1}$ must satisfy the boundary condition $J^\bullet(X^{k+1}_\tau)=g(X^{k+1}_\tau)$, we conclude that $W(X^{k+1}_\tau)=0$. Therefore, the left-hand side is zero, and since $\cL^{k+1}W\le 0$, we can conclude that 
\begin{equation}
W(x)\ge 0\,,\quad x\in D\,.
\end{equation}
Together with $W(x)=0$ for $x\in\partial D$, this implies that $J^k\ge J^{k+1}$ as claimed. 
\end{proof}

Under the additional assumption that the diffusion coefficient $\sigma$ is bounded, the last lemma implies that both cost values converge to the value function and the controls convergence to the unique optimal control.

\begin{theorem}[Convergence of policy iteration]
Under the previous assumptions and $\sup_{x\in D}\|(\sigma(x))\|_F < \infty$, 
Algorithm \ref{algo:PI1} converges. In particular,    
\begin{equation}J^{k} \to v_1\quad\text{and}\quad \nabla J^k\to \nabla v_1\end{equation} 
uniformly on any compact subset of $\overline{D}$. As a consequence, the sequence $c_k$ converges uniformly to the optimal feedback control law $c^*=-\lambda\sigma^\top \nabla v_1$. 
\end{theorem}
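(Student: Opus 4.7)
The scheme of the argument is monotone convergence plus elliptic regularity plus uniqueness for the HJB equation. First, Lemma \ref{lem:mono1} gives $J^{k+1}\le J^{k}$ pointwise, and the identification $J^{k}(x)=J_1(x,c_k)$ with $v_1(x)=\inf_u J_1(x,u)$ gives $J^{k}\ge v_1$ for every $k$. Hence the monotone sequence $J^{k}(x)$ has a pointwise limit $J^{\infty}(x)\ge v_1(x)$ on $\overline D$, and $J^{\infty}=g$ on $\partial D$ since each $J^{k}$ does.

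The next step is to promote pointwise convergence to $C^{1}$-convergence on compact subsets of $D$, which is where the assumption $\sup_{x\in D}\|\sigma(x)\|_F<\infty$ enters. Each iterate satisfies the linear elliptic PDE
\begin{equation*}
\tfrac12\sigma\sigma^\top\!:\!\nabla^2 J^{k}+(b+\sigma c_k)\cdot\nabla J^{k}+\tfrac{1}{2\lambda}|c_k|^2+f=0,
\end{equation*}
with uniformly elliptic principal part (by assumption) and $L^\infty$ data $f$ and boundary data $g$. The monotonicity sandwich $v_1\le J^{k}\le J^{1}$ provides a uniform $L^\infty$ bound on $J^{k}$. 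Combined with boundedness of $\sigma$ and an inductive control of $c_k=-\lambda\sigma^\top\nabla J^{k-1}$ via interior gradient estimates (Calder\'on--Zygmund or Schauder, applied to the linear equation with drift $b+\sigma c_k$ that is bounded whenever the previous iterate $\nabla J^{k-1}$ is bounded), one obtains a uniform $C^{1,\alpha}_{\mathrm{loc}}$ bound on the sequence $(J^{k})$. The base case $c_1=0$ (or any bounded initialisation) together with standard Schauder theory for the resulting linear elliptic BVP supplies the starting bound on $\nabla J^{1}$.

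Given uniform $C^{1,\alpha}_{\mathrm{loc}}$ bounds, the Arzel\`a--Ascoli theorem extracts a subsequence along which $J^{k_j}\to J^{\infty}$ and $\nabla J^{k_j}\to \nabla J^{\infty}$ uniformly on compact subsets of $D$. In particular $c_{k_j+1}=-\lambda\sigma^\top\nabla J^{k_j}\to -\lambda\sigma^\top\nabla J^{\infty}$ locally uniformly. Passing to the limit in the linearised HJB equation, using the explicit minimiser formula (\ref{policyUpdate1}), the limit $J^{\infty}$ solves the nonlinear HJB equation (\ref{PDE1}) with boundary value $g$. Uniqueness of classical solutions of (\ref{PDE1}) (from the regularity assumptions placed on the data) forces $J^{\infty}=v_1$; since every $C^{1,\alpha}_{\mathrm{loc}}$-accumulation point coincides with $v_1$, the full sequence converges, and $c_k\to c^{*}=-\lambda\sigma^\top\nabla v_1$ uniformly on compacts.

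The main technical obstacle is the inductive $L^\infty$-bound on $c_k$: elliptic regularity for the linearised HJB delivers $\nabla J^{k}$-bounds in terms of $\|c_k\|_\infty$, yet $\|c_{k+1}\|_\infty$ is controlled by $\|\nabla J^{k}\|_\infty$, so the estimates must not grow along the iteration. This is where monotonicity is essential: the uniform $L^\infty$ envelope $v_1\le J^{k}\le J^{1}$ and the structure of the linearised equation (whose zeroth-order term vanishes) prevent amplification of the gradient bound from step to step, so an a~priori uniform bound can be propagated. A cleaner alternative, if one is willing to invoke viscosity solutions, is to observe that the pointwise monotone limit of classical sub/supersolutions is a viscosity solution of (\ref{PDE1}), and then appeal to a comparison principle to conclude $J^{\infty}=v_1$; this bypasses the inductive gradient bound at the price of weaker convergence of $\nabla J^{k}$, which then needs to be recovered \emph{a posteriori} from the limiting regularity of $v_1$.
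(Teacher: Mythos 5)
Your proof follows the same high-level architecture as the paper's: monotonicity (Lemma~\ref{lem:mono1}) plus a lower envelope $J^k\ge v_1$ gives a pointwise limit, compactness upgrades this to convergence of $\nabla J^k$, and one passes to the limit in the linearised HJB equation to identify the limit with the value function. The paper, however, obtains the uniform $C^1_{\mathrm{loc}}$ convergence and the weak $L^p$ convergence of $\nabla^2 J^k$ by a single citation to Theorem~VI.6.1 of Fleming--Rishel and then follows Chang--Krishna; you instead attempt to rederive the compactness from first principles via interior Schauder/Calder\'on--Zygmund estimates and an Arzel\`a--Ascoli argument. That is a reasonable and more self-contained route, but it leaves a genuine gap exactly where you flag it.

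The gap is the inductive $L^\infty$-bound on $c_k$. Your argument is circular: interior elliptic estimates bound $\|\nabla J^k\|_{C^\alpha_{\mathrm{loc}}}$ by $\|J^k\|_\infty$ \emph{and} the data of the equation, and the data for the $k$-th step include the drift $b+\sigma c_k$ and the source $\tfrac{1}{2\lambda}|c_k|^2+f$, both of which depend on $\|c_k\|_\infty=\lambda\|\sigma^\top\nabla J^{k-1}\|_\infty$. The $L^\infty$ sandwich $v_1\le J^k\le J^1$ controls only the function values, not the gradient; the assertion that ``the structure of the linearised equation (whose zeroth-order term vanishes) prevents amplification of the gradient bound'' is stated, not proved, and is precisely the nontrivial step. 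A uniform $L^\infty$ bound on a sequence of solutions of elliptic equations with growing drift and growing source does \emph{not} in general imply a uniform gradient bound, so you need an additional argument (e.g.\ a Bernstein-type estimate exploiting the specific structure $c_{k+1}=-\lambda\sigma^\top\nabla J^k$ together with the sign of the control penalty, or a comparison argument that directly controls $\nabla J^k$ via $v_1$ and $J^1$) to close the induction. This is exactly the hard content that the paper delegates to Fleming--Rishel's Theorem~VI.6.1. Your viscosity-solution alternative (monotone limit of classical solutions is a viscosity solution, then use comparison) is a valid bypass for identifying $J^\infty=v_1$, but as you note it only gives pointwise convergence of $J^k$, and the claimed uniform convergence of $\nabla J^k$ --- which is what the statement actually asserts and what you need to conclude $c_k\to c^*$ --- would still have to be recovered by elliptic regularity \emph{after} identifying the limit, so the regularity step cannot be entirely avoided.
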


\begin{proof}[Proof (sketch)]
We explain only the idea of the proof that is essentially following the lines of \cite[Thm.~3.3]{chang1986successive}. 
By \cite[Thm VI.6.1]{flemingrishel}, both $J^k$ and $\nabla J^k$ have uniform limits $\bar{J}$ and $\nabla\bar{J}$. Moreover, $\nabla^2 J^k$ converges to $\nabla \bar{J}$ weakly in $L^p(D)$ for any $p>1$. This, together with the linearized HJB equation
\begin{equation}
\cL^{k+1}J^{k+1} + \ell(\cdot,c_{k+1}) = 0\,,\quad k\ge 0
\end{equation}
and $\sup_{x\in D}\|(\sigma(x))\|_F\le \infty$, entails 
\begin{equation}\label{weakHJB}
   \cL^{k+1}J^{k} + \ell(\cdot,c_{k+1}) \;\rightharpoonup\; 0\quad \text{weakly in } L^p\,. 
\end{equation}
As a consequence, since
\begin{equation}
\cL^{k+1}J^{k} + \ell(\cdot,c_{k+1}) = \min_c \{\cL^c J^k + \ell(\cdot,c)\}\,,
\end{equation}
the (weak) convergence of $J^k, \nabla J^k$, and $\nabla^2 J^k$ implies that  
\begin{equation}
\cL^{k+1}J^k + \ell(\cdot,c_{k+1})\}\;\rightharpoonup\;\min_c\{\cL^c \bar{J} + \ell(\cdot,c)\}\quad \text{weakly in } L^p\,,
\end{equation}
Since the right hand in the last equation side is zero by (\ref{weakHJB}), we can conclude (by extracting uniformly convergent subsequences all of which have the same limit) that the limit cost value is equal to the value function, $\bar{J}=v_1$, which together with $c^*=-\lambda\sigma^\top\nabla\bar{J}$ being the unique minimizer implies that 
\begin{equation}
c_k=-\lambda\sigma^\top\nabla J^k\;\to\; \bar{c} = -\lambda\sigma^\top\nabla \bar{J} =  -\lambda\sigma^\top\nabla v_1 = c^*\,,
\end{equation}
uniformly on any compact subset of $\overline{D}$.
\end{proof}

\subsection{Second moment minimization}\label{sec:API2}

We now consider the second stochastic control problem, based on a square root transformation. To this end, we assume $f\equiv 0$ and let
\begin{equation}
    Q^k(x) = \E^{x}\left[g^2\left(X^k_{\tau^k} \right)\exp\left(\int\limits_{0}^{\tau^k}\vert c_k(X^k_t)\vert^2\,ds\right) \right],
\end{equation}
As before, we call $\cL^k$ the generator under the control policy $c_k$ that we assume to be uniformly elliptic for every $k\ge 1$. Additional regularity assumptions on the coefficients may be needed for $Q^k$ to solve the elliptic PDE 
\begin{equation}\label{linHJB2}
  (\cL^k Q^k)(x) + (c_k(x))^2 Q^k(x) = 0\,,\quad x\in D\,,
\end{equation}

with boundary data $Q^k(x)=g^2(x)$ for $x\in\partial D$. We will briefly discuss this issue below, for now we assume that (\ref{linHJB2}) has a unique classical solution.

\paragraph*{Policy iteration algorithm} As before, we state the exact form of the algorithm (i.e.,~without discretization) for second moment minimization. We will show below that the algorithm does not unconditionally converge. 

\begin{algorithm}[H]\label{algo:PI2}
    \caption{Howard's policy improvement algorithm (quadratic case)}
    \begin{algorithmic}
    \Require $c_k$, $k\ge 1$, with $\|c_k\|_\infty\le \delta$ sufficiently small
    \While{$\|Q^{k+1}-Q^k\|_{L^2}>\mathrm{TOL}$ and $\|c_{k+1}\|_\infty\le \delta$}
    \State Solve $\cL^k Q^{k} + (c_k)^2Q^k = 0$
    \State Update $c_{k+1}\in\argmin_c \{\cL^c Q^{k} + c^2 Q^k \}$
    \EndWhile
    \Ensure $v_1\approx Q^{k+1}, c^*\approx \sigma^\top\nabla \log Q^{k+1}$
    \end{algorithmic}
\end{algorithm}
Note that $c_{k+1}$ in the policy update step can again be explicitly computed: 
\begin{equation}\label{policyUpdate2}
    c_{k+1}= -\frac{1}{2}\sigma^\top\nabla \log Q^{k} \in \argmin_c \left\{(\sigma c)\cdot \nabla Q^{k} + c^2 Q^k\right\} 
\end{equation}

\paragraph*{Convergence analysis} As before, the policy improvement step implies that 
\begin{equation}\label{policyMono2}
    (\cL^k Q^{k})(x) + (c_k(x))^2 Q^k(x) \ge  (\cL^{k+1} Q^{k})(x) + (c_{k+1}(x))^2Q^k(x)\,
\end{equation}
for all $x\in D$ and all $k\ge 1$. We thus obtain the analog of Lemma \ref{lem:mono1}. 

\begin{lemma}[Conditional monotonicity of the second moment]\label{lem:mono2} Assume that (\ref{linHJB2}) has a unique classical solution $Q^{k}\in C^2(D)\cap C(\overline{D})$ for all $k\ge 1$. Then there exists a constant $\delta>0$, such that $\|c_k\|_\infty\le\delta$, $k\in\N$ implies that  
\begin{equation}
Q^{k+1} \le Q^{k}\,,\quad k\in\N\,.
\end{equation}
\end{lemma}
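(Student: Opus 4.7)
The plan mirrors the proof of Lemma \ref{lem:mono1}, with the extra difficulty that the linearized equation (\ref{linHJB2}) carries a zeroth-order potential $c_k^2$ that must be absorbed into a maximum-principle argument. Setting $W := Q^k - Q^{k+1}$ and using the PDE (\ref{linHJB2}) for $Q^{k+1}$ together with the pointwise policy-improvement inequality (\ref{policyMono2}), I would write
\begin{equation*}
(\cL^{k+1} + c_{k+1}^2) W = \cL^{k+1} Q^k + c_{k+1}^2 Q^k \le \cL^k Q^k + c_k^2 Q^k = 0 \quad\text{in } D,
\end{equation*}
with $W = 0$ on $\partial D$, since both $Q^k$ and $Q^{k+1}$ equal $g^2$ there.

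To deduce $W \ge 0$ from this sign information, I would apply It\^o's formula to the exponentially reweighted process
\begin{equation*}
M_t := W(X^{k+1}_t)\,\exp\!\left(\int_0^t c_{k+1}^2(X^{k+1}_s)\,ds\right),
\end{equation*}
localize at $\tau^{k+1}\wedge T$, and take expectations. The local-martingale part vanishes, while the drift is $[(\cL^{k+1} + c_{k+1}^2) W](X^{k+1}_t)$ times the positive exponential weight. Sending $T\to\infty$ and using $W(X^{k+1}_{\tau^{k+1}})=0$ formally yields
\begin{equation*}
0 = W(x) + \E^x\!\left[\int_0^{\tau^{k+1}} e^{\int_0^s c_{k+1}^2(X^{k+1}_r)\,dr}\,\big[(\cL^{k+1} + c_{k+1}^2) W\big](X^{k+1}_s)\,ds\right],
\end{equation*}
and since the integrand is nonpositive, $W(x) \ge 0$, i.e.\ $Q^{k+1} \le Q^k$.

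The main obstacle, and the source of the smallness constant $\delta$, is justifying the limit $T\to\infty$: it requires finiteness of the exponential moment $\E^x[\exp(\int_0^{\tau^{k+1}} c_{k+1}^2\,ds)]$. Under the hypothesis $\|c_{k+1}\|_\infty \le \delta$ this reduces to $\E^x[\exp(\delta^2 \tau^{k+1})] < \infty$, which by Khasminskii's lemma holds as soon as $\delta^2$ is strictly smaller than the principal Dirichlet eigenvalue of $-\cL^{k+1}$ on $D$. Because $\cL^{k+1}$ is uniformly elliptic with bounded drift perturbation on the bounded domain $D$, the collection of such eigenvalues over all admissible feedbacks with $\|c\|_\infty \le \delta$ is bounded below by a positive constant; fixing $\delta$ once and for all below (the square root of) this uniform bound supplies a single $\delta$ that works for every $k$. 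The remaining technical points---an a priori $L^\infty$ bound on $Q^k$ so that the reweighted process is actually integrable, and a standard localization to pass from local martingale to martingale---are routine given the classical-solution hypothesis and the boundedness of $D$ and of the boundary data $g^2$.
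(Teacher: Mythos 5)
Your proof is correct, and it reaches the same key PDE inequality $(\cL^{k+1} + c_{k+1}^2)W \le 0$ with $W=0$ on $\partial D$ that the paper's argument does, but you then diverge: the paper concludes analytically, noting that for $\|c_{k+1}\|_\infty \le \delta$ the operator $-(\cL^{k+1}+c_{k+1}^2)$ remains nonnegative (referencing the optimal Hardy weight) and applying the weak maximum principle, whereas you argue probabilistically via Feynman--Kac, reweighting by $\exp\!\big(\int_0^t c_{k+1}^2\,ds\big)$ and invoking Khasminskii's lemma to justify passing to the limit $T\to\infty$. The two routes are mathematically two faces of the same coin: the smallness condition on $\delta$ that makes the maximum principle hold for an elliptic operator with a nonnegative zeroth-order coefficient $c_{k+1}^2$ is precisely the condition that $\|c_{k+1}\|_\infty^2$ lie below the principal Dirichlet eigenvalue of $-\cL^{k+1}$, which is also exactly the Khasminskii condition guaranteeing $\E^x[\exp(\delta^2 \tau^{k+1})] < \infty$. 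Your version is arguably more constructive in pinning $\delta$ to a concrete spectral quantity and in making explicit the integrability needed, and it sidesteps the need to cite Hardy-weight or anti-maximum-principle results; the paper's version is shorter and stays entirely at the PDE level. One minor caveat: your claim that the principal eigenvalues are uniformly bounded below over all feedbacks with $\|c\|_\infty\le\delta$ deserves a sentence of justification (e.g.\ continuity of the principal eigenvalue in the drift and a compactness/boundedness argument), but the paper's statement makes a comparable unsubstantiated assertion, so this is a shared rather than introduced gap.
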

\begin{proof}
Letting $U := Q^{k} - Q^{k+1}$, it follows by adding and subtracting $(c_{k+1})^2Q^k$, 
\begin{align*}
    \cL^{k+1}U & = \cL^{k+1}Q^k - \cL^{k+1}Q^{k+1}\\
    & = \cL^{k+1}Q^k + (c_{k+1})^2Q^k - \left(\cL^{k+1}Q^{k+1} + (c_{k+1})^2Q^k\right)\\
    & \le \cL^{k}Q^k + (c_k)^2Q^k - \left(\cL^{k+1}Q^{k+1} + (c_{k+1})^2Q^k\right)\\
    & = - \left(\cL^{k+1}Q^{k+1} + (c_{k+1})^2Q^k\right)\\
     & = - \left(\cL^{k+1}Q^{k+1} + (c_{k+1})^2Q^{k+1} \right) - (c_{k+1})^2(Q^k - Q^{k+1})\\
     & = -(c_{k+1})^2 U
\end{align*}
where we have used the linearized HJB equation (\ref{linHJB2}) in lines 4 and 6 and the monotonicity property (\ref{policyMono1}) in going from line 2 to 3. This implies that
\begin{equation}
(\cL^{k+1} + (c_{k+1})^2) U \le 0\,,\quad U|_{\partial D} = 0\,,
\end{equation}
where the boundary condition $U|_{\partial D} = 0$ is a consequence of our stopping time definition. Then there exists a constant $\delta>0$, such that $\|c_{k+1}\|_\infty\le \delta$ implies that the operator $-(\cL^{k+1} + (c_{k+1})^2)$ is still nonnegative. (The optimal such constant that preserves nonnegativity is related to the optimal Hardy weight \cite{devyver2014optimal}.) Then the weak maximum principle (e.g.~\cite{pinchover1999maximum}) implies that $-U$ attains it maximum on the boundary $\partial D$, in other words, $U\ge 0$ in $\overline{D}$. 
\end{proof}

The convergence of the Algorithm  \ref{algo:PI2} is stated without proof. Under the conditions that lead to a monotonically decreasing sequence $(Q^k)_{k\ge 1}$, convergence follows by similar arguments as in the log transform case.

\begin{theorem}[Convergence of policy iteration]
    Algorithm \ref{algo:PI2} converges under the assumptions of Lemma \ref{lem:mono2} and $\sup_{x\in D}\|(\sigma(x))\|_F < \infty$, provided that $Q^{k+1}\le Q^k$ for all $k\in\N$. In this case,
    \begin{equation}
        Q^k\to v_2\quad\text{and}\quad \nabla Q^k\to\nabla v_2
    \end{equation}
    uniformly on any compact subset of $\overline{D}$. Moreover, the sequence $c_k$ converges to the optimal feedback control law $c^*\sigma^\top\log v_2$. 
\end{theorem}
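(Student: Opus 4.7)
The plan is to mimic the argument from the log-transform case, using the monotonicity guaranteed by the hypothesis $Q^{k+1}\le Q^k$ as a replacement for the unconditional monotonicity Lemma~\ref{lem:mono1}. First, since $Q^k\ge 0$ and the sequence $(Q^k)$ is monotonically decreasing on $\overline{D}$, it converges pointwise to some nonnegative limit $\bar{Q}\le Q^1$. The sequence is therefore uniformly bounded in $L^\infty(D)$. Combined with the running assumption $\|c_k\|_\infty\le\delta$, the uniform ellipticity of $\cL^k$, and the uniform bound on $\sigma$, the linear elliptic equation $\cL^k Q^k + (c_k)^2 Q^k = 0$ with boundary data $g^2$ produces (via interior Schauder/$W^{2,p}$ estimates) uniform bounds on $Q^k$ in $C^{1,\alpha}_{\mathrm{loc}}$ and on $\nabla^2 Q^k$ in $L^p_{\mathrm{loc}}$ for every $p>1$. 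Extracting a subsequence via Arzel\`a--Ascoli gives $Q^k\to\bar{Q}$ and $\nabla Q^k\to\nabla\bar{Q}$ uniformly on compact subsets of $\overline{D}$, together with $\nabla^2 Q^k\rightharpoonup\nabla^2\bar{Q}$ weakly in $L^p_{\mathrm{loc}}$.

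Next, I would pass to the limit in the policy update. From (\ref{policyUpdate2}) the sequence of controls $c_{k+1}=-\tfrac{1}{2}\sigma^\top\nabla\log Q^k$ converges uniformly on compacts to $\bar{c}=-\tfrac{1}{2}\sigma^\top\nabla\log\bar{Q}$, provided $\bar{Q}>0$ in the interior of $D$ (which follows from the strong maximum principle applied to the equation for $Q^k$, since $g^2\ge 0$ and we may assume $g\not\equiv 0$ on $\partial D$). Writing the linearized equation in the minimum form
\begin{equation*}
    \cL^{k+1}Q^k + (c_{k+1})^2 Q^k \;=\; \min_c\bigl\{\cL^c Q^k + c^2 Q^k\bigr\},
\end{equation*}
the left-hand side tends to $0$ in $L^p_{\mathrm{loc}}$ by the same bootstrap as in the log-transform proof, while the right-hand side converges (weakly, using continuity of the minimization in $(\nabla Q^k, Q^k)$) to $\min_c\{\cL^c\bar{Q}+c^2\bar{Q}\}$. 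Hence $\bar{Q}$ is a classical solution of the risk-sensitive HJB equation (\ref{HJB_2}) with boundary data $g^2$.

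Uniqueness of the bounded positive solution of (\ref{PDE_2}), combined with Corollary~\ref{cor:quadtransform} which identifies that unique solution with $v_2$, forces $\bar{Q}=v_2$ and hence $\nabla\bar{Q}=\nabla v_2$. Since all convergent subsequences share this limit, the full sequences $Q^k$ and $\nabla Q^k$ converge as claimed, and consequently $c_k\to c^* = -\tfrac{1}{2}\sigma^\top\nabla\log v_2$ uniformly on compact subsets of $\overline{D}$.

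The main obstacle, compared with the log-transform case, is that the zeroth-order coefficient $(c_k)^2$ in (\ref{linHJB2}) has the \emph{wrong sign} for a straightforward maximum principle, so both the monotonicity of $(Q^k)$ and the ellipticity arguments can fail once the iterates grow large. This is precisely why the hypothesis $\|c_k\|_\infty\le\delta$ (controlled by an optimal Hardy-type weight, as noted after Lemma~\ref{lem:mono2}) and the preserved monotonicity $Q^{k+1}\le Q^k$ must be assumed rather than derived; all subsequent steps, in particular the weak passage to the limit and the identification $\bar{Q}=v_2$, then go through as in the log-transform case.
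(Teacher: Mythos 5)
Your proof sketch fills in a gap the paper deliberately leaves open: the paper states this theorem without proof, remarking only that ``convergence follows by similar arguments as in the log transform case,'' and you carry out exactly that adaptation. The template is the same three-step plan as the paper's proof of the log-transform convergence theorem: (i) monotonicity $Q^{k+1}\le Q^k$ together with a priori elliptic estimates yields uniform convergence of $Q^k$ and $\nabla Q^k$ and weak $L^p$-convergence of $\nabla^2 Q^k$ on compacts; (ii) pass to the limit in the linearized HJB identity recast as a pointwise minimum over controls; (iii) identify $\bar{Q}$ with $v_2$ by uniqueness. The only cosmetic difference is that you invoke interior Schauder/$W^{2,p}$ estimates directly where the paper cites Fleming--Rishel Theorem~VI.6.1 for the analogous compactness step; these are the same thing in practice.

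Two of your side remarks are genuinely useful additions: you correctly flag that the policy update $c_{k+1}=-\tfrac12\sigma^\top\nabla\log Q^k$ only makes sense if $\bar{Q}>0$ in the interior of $D$ (in the paper's numerical setting this is guaranteed by the $\epsilon$-regularization $g=\mathbb{1}_{\partial B}+\epsilon$, which gives $Q^k\ge\epsilon^2>0$), and you correctly identify the essential structural difference from the log-transform case, namely the \emph{nonnegative} zeroth-order coefficient $(c_k)^2$ in (\ref{linHJB2}), which has the wrong sign for an unconditional maximum principle and is precisely why the bound $\|c_k\|_\infty\le\delta$ and the monotonicity $Q^{k+1}\le Q^k$ must be assumed rather than derived. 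One caveat you should make explicit if you want a self-contained argument: the uniqueness of the positive classical solution of (\ref{PDE_2}), which you use in the final identification $\bar{Q}=v_2$, is also only asserted, not established, in the paper (the text merely notes that (\ref{PDE_2}) is well-posed if $v>0$); a short comparison-principle argument for the associated linear problem via Corollary~\ref{cor:quadtransform} would close this.
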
    

The sufficient conditions for convergence of Algorithm \ref{algo:PI2}  may fail in general, for example, when $Q^k$ attains the value zero at the boundary, since 
\begin{equation}
c_{k+1}\propto\nabla\log Q^k=\nabla Q^k / Q^k\,.
\end{equation} 
We will illustrate numerically that the algorithm converges when the controls remain sufficiently small throughout the iteration and diverges otherwise. For the committor problem, this means that the regularization parameter needs to be chosen sufficiently large.

\subsection{Computational aspects} 
Even though the policy update step in Algorithm \ref{algo:PI1} has an explicit solution, the policy evaluation step requires to numerically solve a potentially high-dimensional PDE and to represent the current iterate of the cost function, $J^k$, in terms of a function basis, hence the name \emph{approximate policy iteration (API)}. 

We avoid a grid-based discretization for solving the elliptic boundary value problem and instead approximate $J^k$ by Monte Carlo from $N$ independent random initial conditions. The function representation of $J^k$, and likewise $u^{k+1}$, is done by projecting the Monte Carlo samples onto radial basis functions (RBFs) $\varphi_1,\ldots,\varphi_L$. Specifically, we use the parametric ansatz 
\begin{equation}
    \hat{J}^k(x) = \sum_{l=1}^L\hat{\theta}_k\varphi_k(x) =:\hat{\theta}^\top\varphi(x)
\end{equation}
where the parameters are determined by linear least squares:
\begin{equation}
 \hat{\theta}\in\argmin_{\theta\in\R^L} \sum_{n=1}^N |J^k(x_i) - \theta^\top \varphi(x_i)|^2\,.
\end{equation}
(We suppress the dependence of the parameters $\hat{\theta}$ on the iteration stage $k$.) The RBF representation and the Monte Carlo approximation introduce numerical errors that may spoil the convergence of the policy iteration scheme. Yet, we show numerically that the numerical discretization does not harm the convergence as long as the RBF basis is rich enough and the number of independent initial conditions is large enough, so API leads to a robust convergent scheme that provides a solution to the underlying stochastic optimal control problem.  

\begin{remark}
    A strength of API is that the linearized HJB equations can be solved by any available numerical methods; see, e.g. \cite{bertsekas2011approximate}. Moreover it is stable under perturbations due to discretization errors \cite{kerimkulov2025mirror}. In most of the relevant applications, the state space is high-dimensional, which excludes the use of grid-based discretizations. In this case, methods of choice are meshless methods, such as PINNs \cite{berrone2023enforcing}, operator learning \cite{peherstorfer2016data}, Koopman-based approaches \cite{kohne2025error}, or Markov state models \cite{schutte2013metastability}, to mention just a few popular examples. 
    
    Moreover, since the PDE that is solved in every iteration step is linear, it can be elegantly combined with projection operator methods (e.g. \cite{zhang2016effective}) that project the coefficients onto a subspace of suitable collective variables as a means to reduce the dimensionality prior to solving the linearized HJB equation; see also~\cite{hartmann2014optimal,hartmann2018}. We will discuss this issue in the next section. 
\end{remark}

\section{Committor problem}\label{sec:numerics}

We now consider $X_t\in \R^d$ and two disjoint subsets $A,B\subset\R^d$ with smooth boundaries $\partial A,\partial B\subset\R^d$. Further let $\tau=\min\{T_A,T_B\}$, with 
\begin{equation}
T_A=\inf\{t\geq 0\vert X_t\in A\}\,,\quad  T_B=\inf\{t\geq 0\vert X_t\in B\}
\end{equation}
being the first hitting times of the sets $A,B$. We assume that $T_A$ and $T_B$ are almost surely finite, and we consider the following boundary value problem. 
\begin{definition}[Committor problem]
	Let $D=\R^d\setminus(\overline{A\cup B})$ be an open (not necessarily bounded) set and $\phi\in C^2(D)\cap C(\overline{D})$ be the solution of 
\begin{equation}\label{committoreqn}
	\begin{cases}
		\cL\phi  =0 & \text{in }~ D\\
		\phi =\mathbb{1}_{\partial B} & \text{on }~ \partial D.
	\end{cases}
\end{equation}
We call (\ref{committoreqn}) the committor equation and $\phi$ the (forward) committor function.
\end{definition}
By the Feynman-Kac Theorem, the solution to (\ref{committoreqn}) is given by 
\begin{equation}
\phi(x)=\E^x[\mathbb{1}_{\partial B}(X_{\tau})]\,.
\end{equation} 
for an initial condition $x\in \R^d\setminus(\overline{A\cup B})$. Equivalently, 
\begin{equation}
\phi(x) = P(T_B<T_A|X_0=x)\,,
\end{equation}
is the probability to hit the set $B$ before hitting $A$ when starting at $x$. 
For reversible diffusions (e.g.~with gradient drift $b=-\nabla V$ and constant scalar diffusion coefficient $\sigma=\sqrt{2\beta^{-1}}$) the committor function encodes the relevant information about the ensemble of reactive trajectories from $A$ to $B$, and it is possible to compute, e.g.~equilibrium transition rates from $A$ to $B$, mean first exit times from $A$, or the hitting point distribution on any interface between $A$ and $B$; the same goes for nonreversible dynamics if also the (backward) committor of hitting $A$ before $B$ is known; for details we refer to \cite{vanden2006towards}.   

The following stochastic control representation of the committor function has been studied by several authors, e.g.~\cite{gao2023transition,hartmann2013characterization,anum,yuan2024optimal}.

\begin{definition}[Log transformed committor equation]\label{SOC_Committor}
	Setting $f\equiv 0$, $g=-\log \mathbb{1}_{\partial B}$ and $\lambda=1$ in Definition \ref{SOC1} we have 
	\begin{equation}\label{log_Committor1}
			-\log\phi(x) = \min_{u\in\cA} \E^{x}\!\left[\int_{0}^{\tau^u} \frac{1}{2}\vert u_s\vert^2ds -\log \left(\mathbb{1}_{\partial B} \left(X_{\tau^u} \right)\right) \right],\quad x\in D
	\end{equation}
	where $X^u$ on the right hand side is the solution to the controlled SDE (\ref{sde+u}), and $\tau^u=\min\{T^u_A,T_B^u\}$ is the first hitting time of $A\cup B$ under the controlled dynamics. 
	% \begin{equation}
	% 	\begin{split}
	% 		&\min_u	J_1(x,u)=\E^{x}\left[\int\limits_{0}^{\tau^u} \frac{1}{2}\vert u_s\vert^2ds -\log \left(\mathbb{1}_{\partial B} \left(X_{\tau^u} \right)\right) \right],\\begin{equation}1.5mm]
	% 		&\text{s.t.}~dX^u_s=(b(X^u_s)+\sigma(X^u_s)u_s)ds+\sigma(X^u_s)dB_s,\quad X^u_0=x\\\\
	% 	\end{split}
	% \end{equation}
	The value function $v_1=-\log \phi$ solves the HJB equation
	\begin{equation}\label{HJB_Committor1}
		\begin{cases}
			\min\limits_{c\in\R^k}\left \{\cL^c v_1 + \frac{1}{2}\vert c\vert^2 \right\}=0 & \text{in}~D \\
			v_1=-\log\circ\mathbb{1}_{\partial B} & \text{on}~\partial D
		\end{cases}
	\end{equation}
	or, equivalently, 
	\begin{equation}\label{PDE_Committor1}
		\begin{cases}
			\cL v_1 - \frac{1}{2}\left|\sigma^\top  \nabla v_1\right|^2 =0 & \text{in}~ D\\
			v_1=-\log\circ\mathbb{1}_{\partial B} & \text{on}~ \partial D\,.
		\end{cases}
	\end{equation}
\end{definition}

The optimal control, for which the minimum in \ref{log_Committor1} is attained is given by 
\begin{equation}
	u_t^*=c_1^*(X^{u^*}_t)\,,
\end{equation} 
with 
	\begin{equation}
		c_1^*=-\sigma^\top  \nabla v_1=\sigma^\top  \frac{\nabla \phi}{\phi}.
	\end{equation}
Since the forward committor satisfies the boundary conditions $\phi|_{\partial A}=0$ and $\phi|_{\partial B}=1$, it is easy to see, using the Poincar\'e inequality, that the optimal control is repulsive in a neighborhood of $\partial A$. Note that the committor function is nondecreasing as a function of the distance from $\partial A$, at least locally in a small neighborhood of $A$, so $\nabla\phi/\phi$ is pushing away from $A$.

Likewise, the committor equation admits the following representation as a risk sensitive SOC problem according to Definition \ref{SOC2}.

\begin{definition}[Quadratic committor equation]\label{Committor_Quadratic}
	Setting $g=\mathbb{1}_{\partial B}$ in Definition \ref{SOC2}, we have 
	\begin{equation}\label{quad_Committor2}
		\phi^2(x)=\min_{u\in\cA}\E^{x}\!\left[\mathbb{1}_{\partial B}\!\left(Y^u_{\tilde{\tau}^u} \right) \exp\!\left(\int_{0}^{\tilde{\tau}^u}|u_s|^2 ds\right) \right],\quad x\in D\,,
	\end{equation}
	where $Y^u$ is the solution to the controlled SDE (\ref{sde-u}), and $\tilde{\tau}^u$ is the first hitting time of $A$ or $B$ under $Y^u$. 
	The value function $v_2=\phi^2$ solves the HJB equation
\begin{equation}\label{HJB_Committor2}
		\begin{cases}
			\min\limits_{c\in\R^k} \left\{  \cL^c v_2 + |c|^2v \right\}=0 & \text{in}~ D\\
			v_2=\mathbb{1}_{\partial B} &  \text{on}~ \partial D\,,
		\end{cases}
		\end{equation}
		or, equivalently, 
	\begin{equation}\label{PDE_Committor2}
		\begin{cases}
			\cL v_2 - \frac{1}{4v_2} \left|\sigma^\top  \nabla v_2 \right|^2=0 & \text{in}~  D\\
			v_2=\mathbb{1}_{\partial B} &  \text{on}~ \partial D\,,
		\end{cases}
	\end{equation}
\end{definition}

The optimal control for which the minimum in (\ref{quad_Committor2}) is attained is given by
\begin{equation}
	u^*_t = c_2^*(Y^{u^*}_t)
\end{equation}
with 
	\begin{equation}
		c_2^*=\frac12 \sigma^\top  \frac{\nabla v_2}{2v_2}=\sigma^\top \frac{\nabla \phi}{\phi}.
	\end{equation}
Note that the difference between the controlled SDE (\ref{sde+u}) for the log transformed committor and the controlled SDE (\ref{sde-u}) for the square root transformation is a minus sign in front of the control. As a consequence, the optimal control tries to attract the dynamics to the non-target set $A$, in accordance with the discussion at the end of Section \ref{sec:SOC}. We will discuss this aspect in the following example for a simple one-dimensional bistable system.

\begin{example}\label{ex:2wellCommittor}
	We consider a one-dimensional reversible diffusion of the form
	\begin{equation}
	dX_t = -V'(X_t)dt + \sqrt{2\beta^{-1}}dB_t\,,\quad X_0=x\,,
	\end{equation} 
	with a symmetric double-well potential 
	\begin{equation}
	V(x) = \frac{1}{2}(x^2-1)^2\,.
	\end{equation}
	We define $A=(-\infty,1.5)$ and $B=(1.5,\infty)$ and compute the corresponding forward committor $\phi=\phi(x)$ using Algorithm \ref{algo:PI1} where we replace the indicator function $\mathbb{1}_{\partial B}$ by the regularized indicator function $\mathbb{1}_{\partial B}+\epsilon$ to guarantee that the corresponding committor function is strictly positive. 
	
	The optimal control $u^*$ that realizes both value functions $v_1=J_1(\cdot,u^*)$ and $v_2=J_2(\cdot,u^*)$ is a gradient force with the same feedback policy 
	\begin{equation}
	c^*_\epsilon(x) = \sigma^\top\frac{\phi_\epsilon'(x)}{\phi_\epsilon(x)} = \sqrt{2\beta^{-1}}(\log\phi_\epsilon(x))'\,,
	\end{equation}
	where $\phi_\epsilon=\phi+\epsilon$ is the corresponding regularized committor that satisfies the boundary conditions $\phi_\epsilon|_{\partial A}=\epsilon$ and $\phi_\epsilon|_{\partial B}=1+\epsilon$. As a consequence, the optimally controlled SDEs are governed by the biased potentials
	\begin{equation}
		V^\pm_\epsilon(x) = \frac{1}{2}(x^2-1)^2 \pm 2\beta^{-1}\log\phi_\epsilon(x)\,.
	\end{equation}
	In the last equation, the minus sign corresponds to the log transformation case, whereas the minus sign is for the square root  transformation.

\begin{figure}
	\centering
	\includegraphics[width=0.65\textwidth]{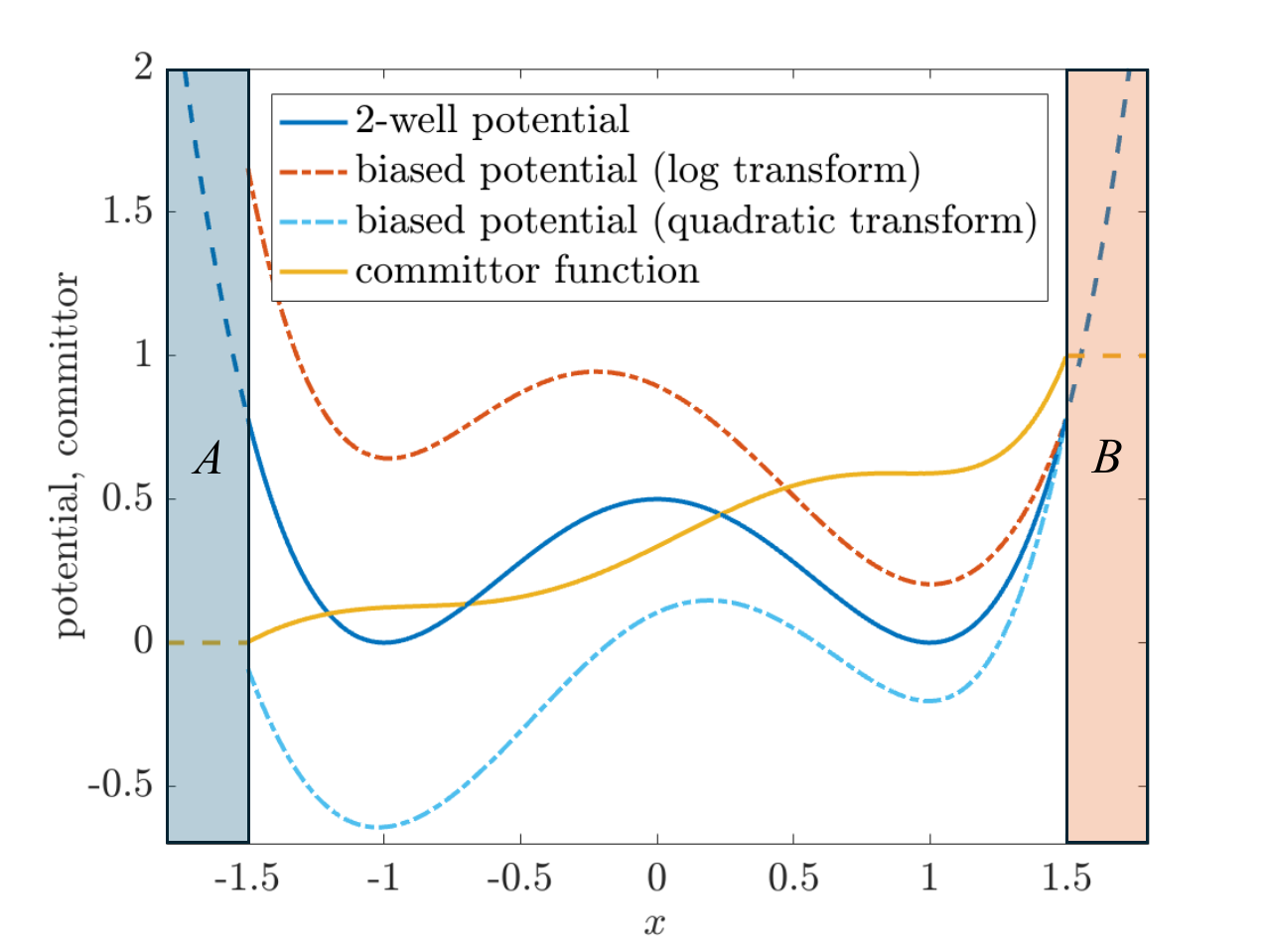}
	\caption{Committor function $\phi$ (orange curve) and the corresponding biased potentials $V^+_\epsilon$ (dashed red curve) and $V^-_\epsilon$ (dashed blue curve) for the symmetric double-well potential (solid blue curve). Unlike the committor, the shape of the bias potential depends on the regularization parameter, $\epsilon$, where here $\epsilon=0.2$.}\label{fig:2wellCommittor}
\end{figure}
	
	Figure \ref{fig:2wellCommittor} shows the double-well potential, the forward committor, and the corresponding biased potentials $V^+_\epsilon$ (dashed red curve) and $V^-_\epsilon$ (dashed blue curve) for inverse temperature $\beta=4.0$ and regularization parameter $\epsilon=0.2$. The plots reveal that, as predicted, the log transformation based representation of the committor decreases the likelihood of hitting the non-target set $A$ whereas the likelihood is increased in case of the square root transformation.  
	
	Note that, since the dynamics is reversible, the backward committor (i.e.~the probability of hitting $A$ before $B$), is given by $1-\phi$, which implies that the control in the case of a square root  transformation increases the likelihood of transitions from $B$ to $A$ in exactly the same way as the log transformation increases the likelihood of going from $A$ to $B$. We stress that the last statement is true due to the fact that our example is one-dimensional. It does not hold in higher dimensions, unless the committor has specific symmetry properties with regard to the sets $A$ and $B$.     

\end{example}

\subsection{A high-dimensional toy example}

We now consider a pure diffusion in $\R^d$ with known committor in order to systematically test and compare the policy iteration algorithms for the two different committor representations. 
To this end, we let 
\begin{equation}
X_t=\sigma B_t+x
\end{equation} 
with $\sigma\in\R\setminus\{0\}$ be a $d$-dimensional Brownian motion starting at $x$ and 
\begin{equation}
	A :=\{x\in\R^d\colon |x|<R_1\}\subset\R^d\,,\quad  B:=\{x\in\R^d\colon |x|>R_2\}\subset\R^d
\end{equation}
open sets. The regularized committor equation reads 
\begin{equation}\label{committoreqnBM}
	\begin{cases}
		\Delta\phi_\epsilon(x)  =0\,, & R_1<|x|<R_2\\
		\phi_\epsilon(x) = \epsilon\,, & |x|=R_1\\
		\phi_\epsilon(x)= 1+\epsilon\,, & |x|=R_2\,.
	\end{cases}
\end{equation}
for some $\epsilon>0$. Note that the committor is independent of $\sigma$, since the prefactor $\frac{\sigma^2}{2}$ in the generator $\cL=\frac{\sigma^2}{2}\Delta$ of $X$ can be dropped. Further note that the parameter $\epsilon$ just leads to a constant shift of the true committor,
\begin{equation}
\phi(x)=P(T_B<T_A|X_0=x)\,,
\end{equation} 
so that the solution to (\ref{committoreqnBM}) is related to $\phi$ by  
\begin{equation}\label{regularization}
\phi_\epsilon(x)= \phi(x)+\epsilon\,.
\end{equation}
The cost functionals associated with the transformed committor are  
	\begin{equation}	J^{\epsilon}_1(x,u)=\E^{x}\left[\int\limits_{0}^{\tau^u} \frac{1}{2}\vert u_s\vert^2ds -\log \left(\mathbb{1}_{\partial B} \left(X_{\tau^u} \right)+\epsilon\right) \right]
\end{equation}
and 
\begin{equation}
	J^{\epsilon}_2(x,u):=\E^{x}\left[ \left(\mathbb{1}_{\partial B}\left(Y^u_{\tilde{\tau}^u} \right)+\epsilon \right)^2 \exp\!\left(\int_{0}^{\tilde{\tau}^u}|u_s|^2 ds\right) \right]\,,
\end{equation}
with the properties
\begin{equation}
-\log(\phi(x)+\epsilon) = \min_{u\in\cA}J^{\epsilon}_1(x,u)
\end{equation}
and 
\begin{equation}
(\phi(x)+\epsilon)^2 = \min_{u\in\cA}J^{\epsilon}_2(x,u)\,.
\end{equation}

\paragraph{Symmetry reduction of the committor equation} 

The domain $D=\R^d\setminus (\overline{A\cup B})$ in (\ref{committoreqnBM}) is rotationally symmetric, and so is the solution. We call $r(x)=|x|$ the distance from the origin, and define a function $\Psi$ by 
\begin{equation}
\Psi(r(x)) := \phi(x)\,.
\end{equation}  
(Likewise, $\Psi_\epsilon(r(x)) := \phi_\epsilon(x)$ defines the regularized radial committor.)
It readily follows that $\Psi$ solves the one-dimensional boundary value problem 
\begin{equation}\label{committoreqnBM2}
	\begin{cases}
		\Delta_r\Psi(r)=0\,, & R_1<r<R_2\\
		\Psi(r)=0\,, & r=R_1\\
		\Psi(r)=1\,, & r=R_2 \,.
	\end{cases}
\end{equation}
where 
\begin{equation}
\Delta_r\Psi(r) = \Psi''(r)+\frac{d-1}{r}\Psi'(r)	 
\end{equation} 
denotes the radial Laplacian. The solution is found by integration: for $d\neq 2$, 
\begin{equation}\label{committorBM}
	\Psi(r)%=\frac{R_1^{2-d}-r^{2-d}}{R_1^{2-d}-R_2^{2-d}}
	=\frac{r^{2-d}-R_1^{2-d}}{R_2^{2-d}-R_1^{2-d}}\,, 
\end{equation}
where the solution for $d=2$ follows from applying l'H\^opital's rule (cf.~\cite{chaosRESIM}), 
\begin{equation}\label{committorBM2}
	\Psi(r)=\frac{\log r - \log R_1}{\log R_2 - \log R_1}.
\end{equation}

The optimal control can now be expressed in terms of the solution to the  radial committor equation (\ref{committoreqnBM2}) by recasting the feedback policy as 
\begin{equation}\label{committorBMctr}
c(x) = \sigma\nabla\log\Psi(r(x)) = \sqrt{2\beta^{-1}}\begin{cases}
	\frac{d}{dx}\log\Psi(x)\,, & d = 1\\
	\frac{x}{|x|}\left.\frac{d}{dr}\log\Psi(r)\right|_{r=|x|}\,, & d>1\,.
\end{cases}
\end{equation}
The left panel of Figure \ref{fig:Committor(r)_2D} shows the symmetry-reduced committor $\Psi=\Psi(r)$ for $d=2$, without regularization, together with the resulting optimal control $u^*=u^*(r)$ as a function of the radius $r$ (in an abuse of notation); the right panel of Figure \ref{fig:Committor(r)_2D} shows the corresponding 2-dimensional vector field (\ref{committorBMctr}). Both plots illustrate that the optimal control in the log transformation representation of the committor  leads to a repulsive force away from the non-target set $A$, pushing in the direction of the target set $B$; for the square root transformation, the control flips sign and the set $A$ becomes absorbing.   

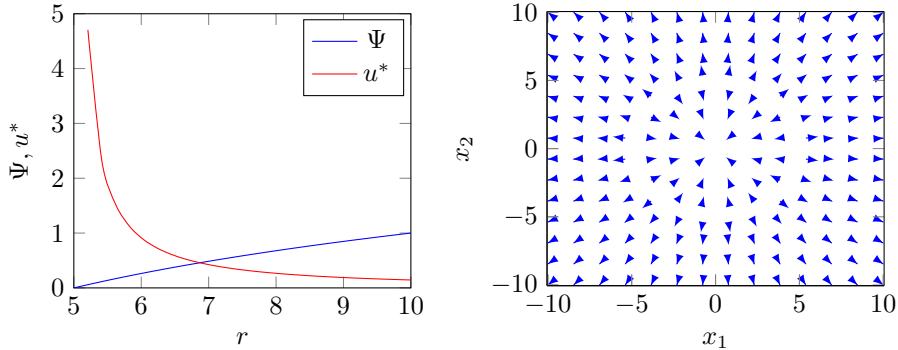
\begin{figure}[H]
	\pgfplotsset{compat=newest,width=0.5\linewidth}
	\begin{tikzpicture}[
		declare function={Psi(\r,\A,\B)=
			ln(\r/ \A)/ln(\B/ \A); 
			u(\r, \A)= 1/(r*ln(r/ \A));
		},]
		\begin{axis}[
			domain=5:10,
			xmin = 5,
			xmax = 10,
			ymin = 0,
			ymax = 5,
			xtick={5,6,7,8,9,10},
			ytick={0,1,2,3,4,5},
			xlabel={$r$\vphantom{$x_1$}},
			ylabel={$\Psi,u^*$}]
			\addplot[smooth, blue]{Psi(x,5,10)};
			\addplot[smooth, red]{1/(x*ln(x/ 5))};
			\legend{$\Psi$, $u^*$}
		\end{axis}
	\end{tikzpicture}
	\pgfplotsset{compat=newest,width=0.5\linewidth}
	\begin{tikzpicture}
	\begin{axis}[%
		view     = {0}{90}, 
		domain   = -10:10,
		y domain = -10:10,
		ytick={-10,-5,0,5,10},
        zmin = -1,
        zmax = 1,
		xlabel={$x_1$},
		ylabel={$x_2$}
		]
%		\addplot3[
%		surf,
%		colormap/cool,
%		]{ln(sqrt(x^2+y^2)/2)/ln(5/2)}; %R1=2; R2=5;	
		\addplot3[blue, quiver={u=x/((x^2+y^2)*ln(sqrt(x^2+y^2)/5)), v=y/((x^2+y^2)*ln(sqrt(x^2+y^2)/5)),scale arrows=0.35}, samples=14, -latex] (x,y,0);
	\end{axis}
	
\end{tikzpicture}	
\caption{Forward committor (\ref{committorBM2}) for $R_1=5$ and $R_2=10$ and the resulting optimal control $u^*=u^*(r)$ as a function of the radius (left panel) and in the 2-dimensional Cartesian representation (right panel). The vector field for $|x|\le 5$ is outside the range of physically relevant initial conditions.}\label{fig:Committor(r)_2D}
\end{figure}

\subsection{Approximate policy iteration}

We now compare the analytical results with numerical approximations obtained by approximate policy iteration (API) using the both MGF representations and the one based on the second moment of the committor. Recall that only in the former case we expect the API algorithm to be unconditionally convergent, whereas in the latter case we expect the algorithm to require strong regularization to prevent too large control values in the neighborhood of the set $A$. 

As for the parameter-linear representation of the cost-value we consider three different types of strictly positive RBF $\varphi_l\in C^\infty$, specifically, we consider 
\begin{enumerate}
    \item[(a)] Gaussian RBFs
    \begin{equation}
        \varphi_l(r) = e^{-\varepsilon^2(r-\mu_l)^2}
    \end{equation}
     \item[(b)] inverse quadratic RBFs
    \begin{equation}
        \varphi_l(r) = \frac{1}{1 + \varepsilon^2(r-\mu_l)^2}
    \end{equation}
     \item[(c)] inverse multiquadric RBFs
    \begin{equation}
        \varphi_l(r) = \frac{1}{\sqrt{1 + \varepsilon^2(r-\mu_l)^2}}
    \end{equation}
\end{enumerate}
for some fixed kernel-width $\varepsilon>0$ and suitable $\mu_l\in\R$, $l=1,\ldots,L$; see \cite{buhmann2000radial} for a discussion of different choices of RBFs and their parameters. 

In the example at hand, the committor is radially symmetric and we therefore represent the committor approximation by 
\begin{equation}
\Psi_\theta(x) = \theta^\top\varphi(|x|)\,,
\end{equation}
where we use the shorthands $\varphi=(\varphi_1,\ldots,\varphi_L)^\top$ and $\theta=(\theta_1,\ldots,\theta_L)^\top\in\R^L$. We further denote by $\widehat{\Psi}=\Psi_{\hat\theta}$ the least squares approximation. That is, given noisy samples $(x_i,y_i)$, $i=1,\ldots,N$ of initial data and committor values, we compute a committor approximation in terms of RBF by minimizing the empirical risk
\begin{equation}
R(\theta) = \frac{1}{N}\sum_{i=1}^N |y_i - \theta^\top\varphi(|x_i|)|^2\,.
\end{equation}
Here the design points $x_1,\ldots,x_n$ are independent draws from some appropriate probability distribution, typically the Boltzmann distribution associated with the underlying dynamics, but other choices are possible too. 

\begin{figure}
    \centering
    \includegraphics[width=0.65\linewidth]{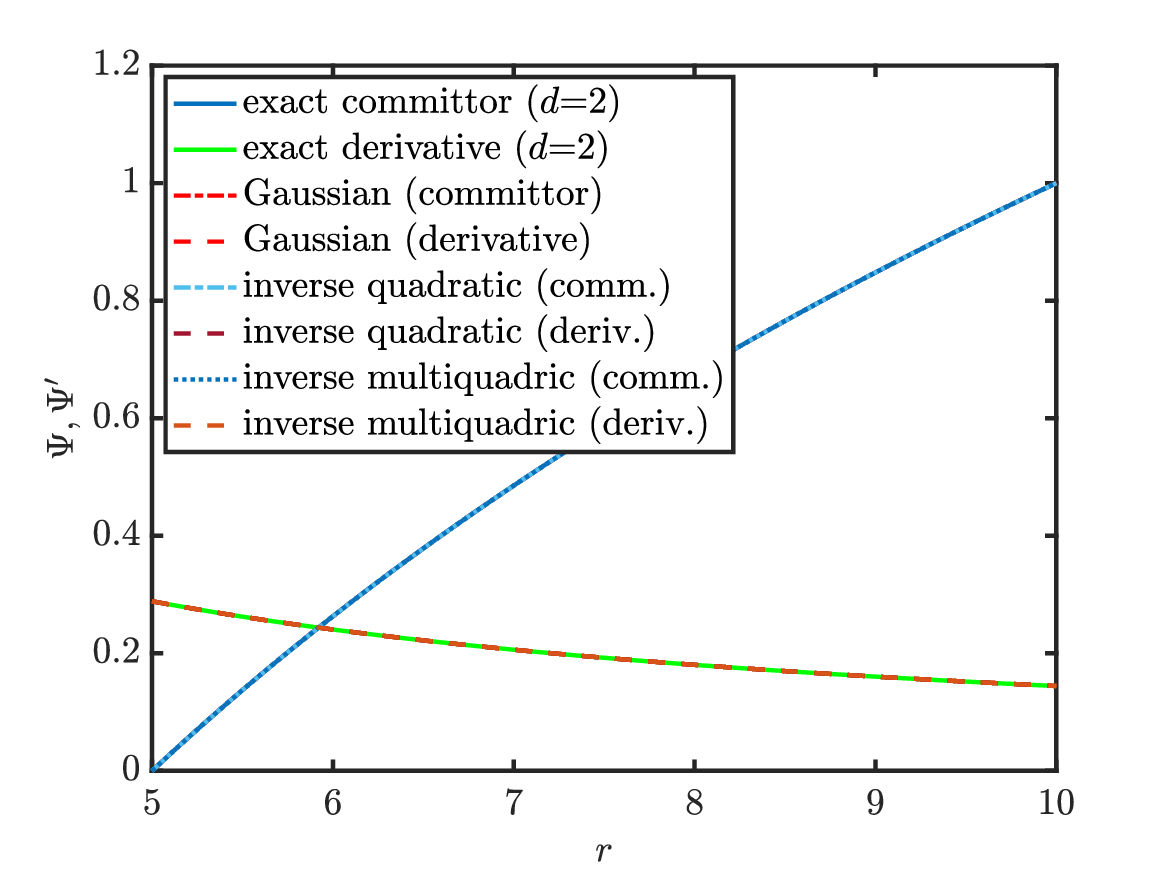}
    \includegraphics[width=0.65\linewidth]{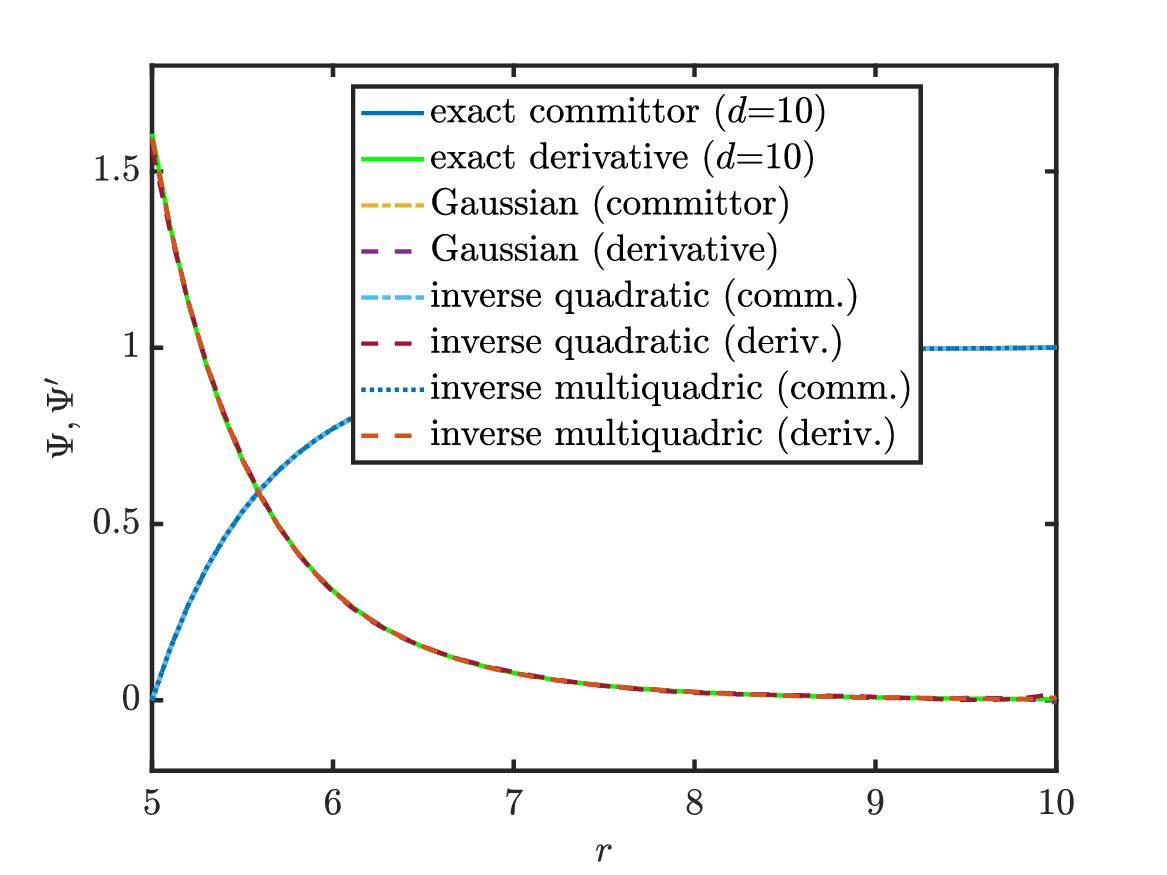}
    \caption{RBF approximation of the radial committor functions and  their derivatives for $d=2$ (upper panel) and $d=10$ (lower panel).}
    \label{fig:RBFapprox}
\end{figure}

Figure \ref{fig:RBFapprox} shows the linear least squares approximations of the committors and their derivatives in dimension $d=2$ (upper panel) and $d=10$ (lower panel) for all three RBFs, with $\varepsilon=0.25$ (Gauss), $\varepsilon=0.05$ (inverse quadratic) and $\varepsilon=0.1$ (inverse multiquadric). The $L=11$ RBF centers  have been set to $\mu_1=5.0,\,\mu_2=5.5,\,\ldots,\, \mu_{L}=10.0$, and we have sampled the committor using $N=51$ equidistant points from the interval $[5,10]$. For this admittedly simple toy example, we observe that the approximation is quite robust with respect to the choice of the kernel width, $\varepsilon$, the number of basis function, $L$, and the RBF centers, $\mu_l$ across different spatial dimensions $d$. 

In the following, we confine ourselves to Gaussian RBFs that allow for the best trade-off between accurate approximation and fast function evaluation.  

\paragraph*{Moment generating function}

\begin{figure}
    \centering
    \includegraphics[width=0.65\linewidth]{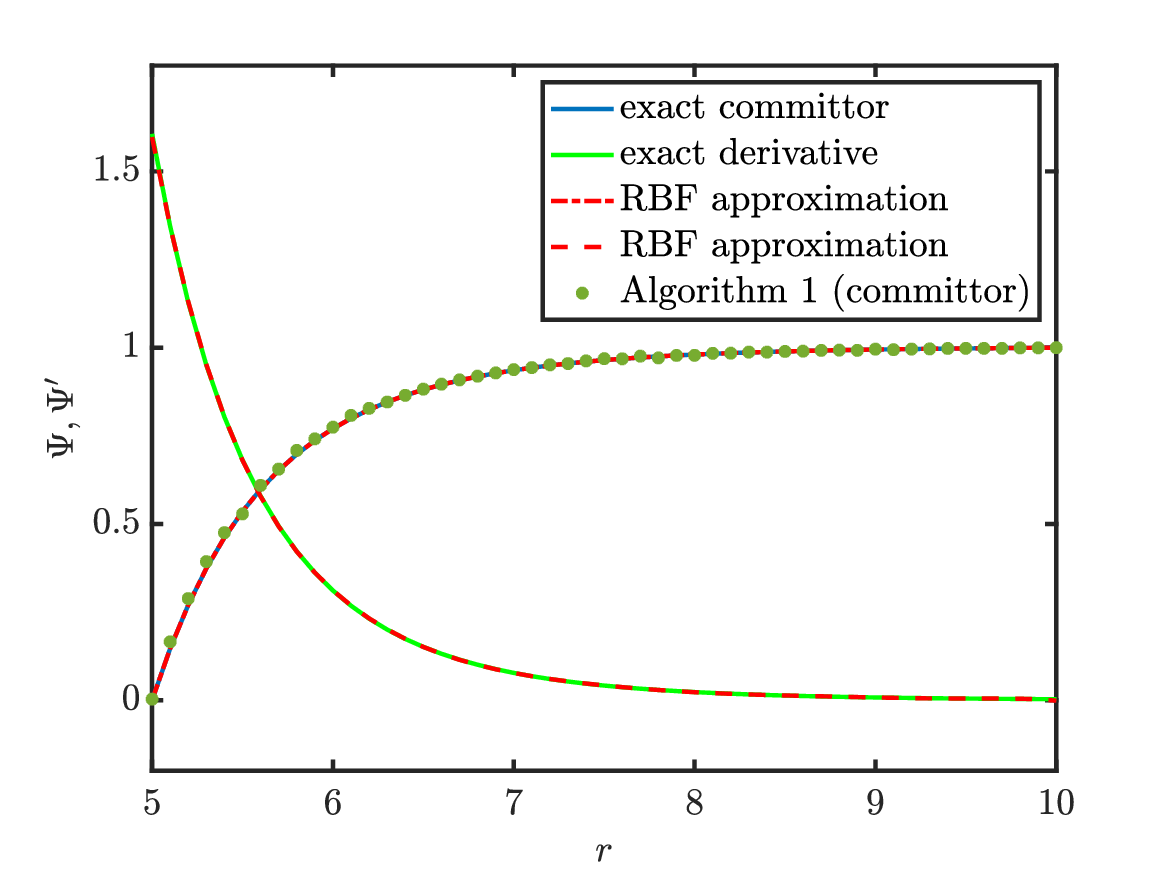}\\\includegraphics[width=0.65\linewidth]{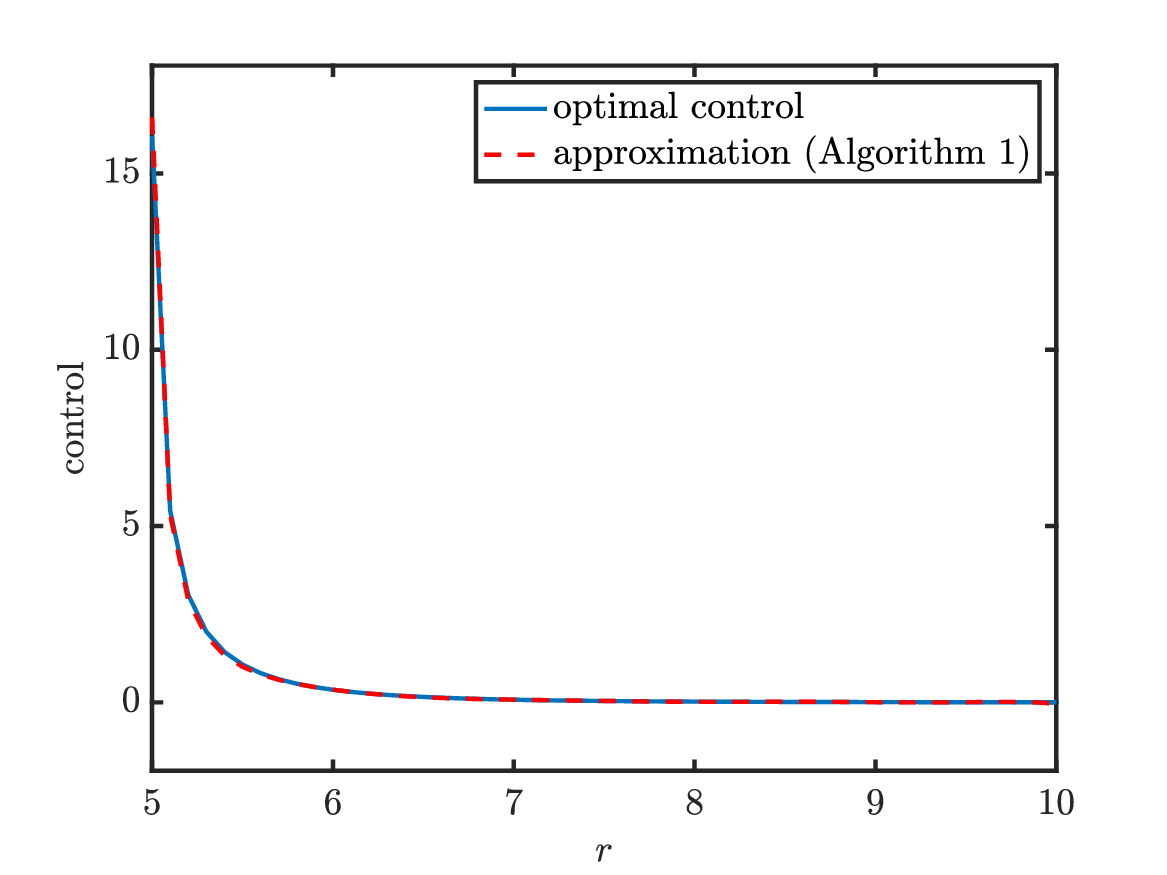}
    \caption{API approximation of the committor based on the MGF representation of the committor.}
    \label{fig:API1}
\end{figure}

We consider the committor for $d=10$, and compute its MGF-based approximation using Algorithm \ref{algo:PI1}, with a Monte Carlo approximation on the cost value. Specifically, we discretize the radius $r\in[5,10]$ using a regular grid with spacing $\Delta r=0.1$ and launch $N\in\{10^2,10^3,10^4\}$ trajectories from independently and uniformly drawn initial values for each value of $r$. The Euler-Maruyama step size has been set to $\Delta t=0.005$, and the API has been initialized with policy
\begin{equation}
c^1(x) = -\sigma^\top\theta_0^\top\nabla\varphi(|x|)
\end{equation}
for a random parameter $\theta_0$ with standard Gaussian distribution, where as before we have set $\lambda=1$. 

We find that the API algorithm needs between $2$ (for $N=10^4$) and $11$ steps (for $N=10^2$) to converge where we have set the stopping criterion of the algorithm to $|J^{k+1}(r)-J^k(r)|\le 10^{-1}$, with the error measured in the Euclidean norm over 51 test points $r\in[5,10]$. The regularization parameter $\epsilon$ has been set to $\epsilon=0.1$ (cf.~equation \ref{regularization}), but we observe convergence of the API algorithm for a fairly large range of regularization parameters $\epsilon$. Figure \ref{fig:API1} shows the committor (upper panel) and the last iterate of the control policy  (lower panel) for $N=10^4$ and regularization parameter $\epsilon=0.1$.

For $N>10^3$ the cost values show the theoretically predicted monotonic behavior (for most test points), in accordance with Lemma \ref{lem:mono1}. For smaller sample size $N$, the cost value initially decrease, but then fluctuate around small values. We illustrate this behavior below for the second moment minimization. 

\paragraph*{Second moment minimization}

\begin{figure}
    \centering
    \includegraphics[width=0.65\linewidth]{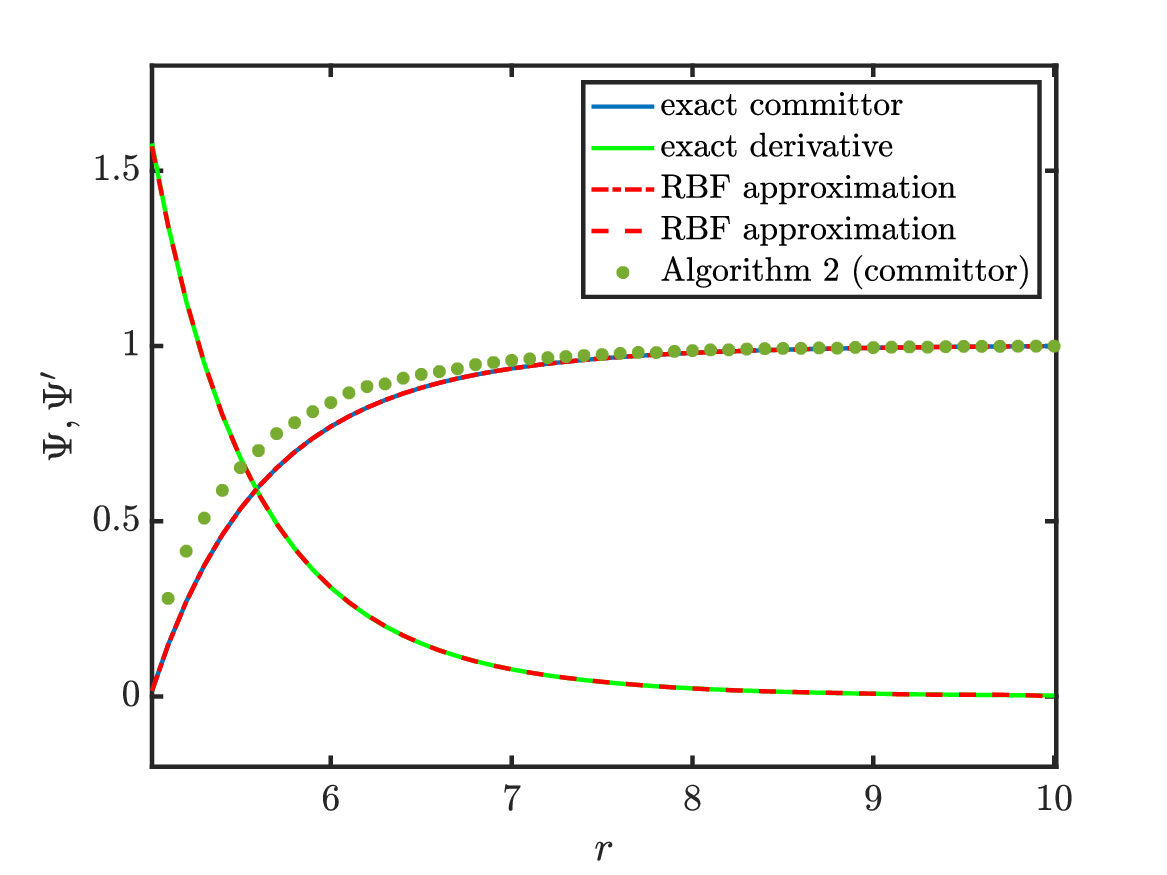}\\
    \includegraphics[width=0.65\linewidth]{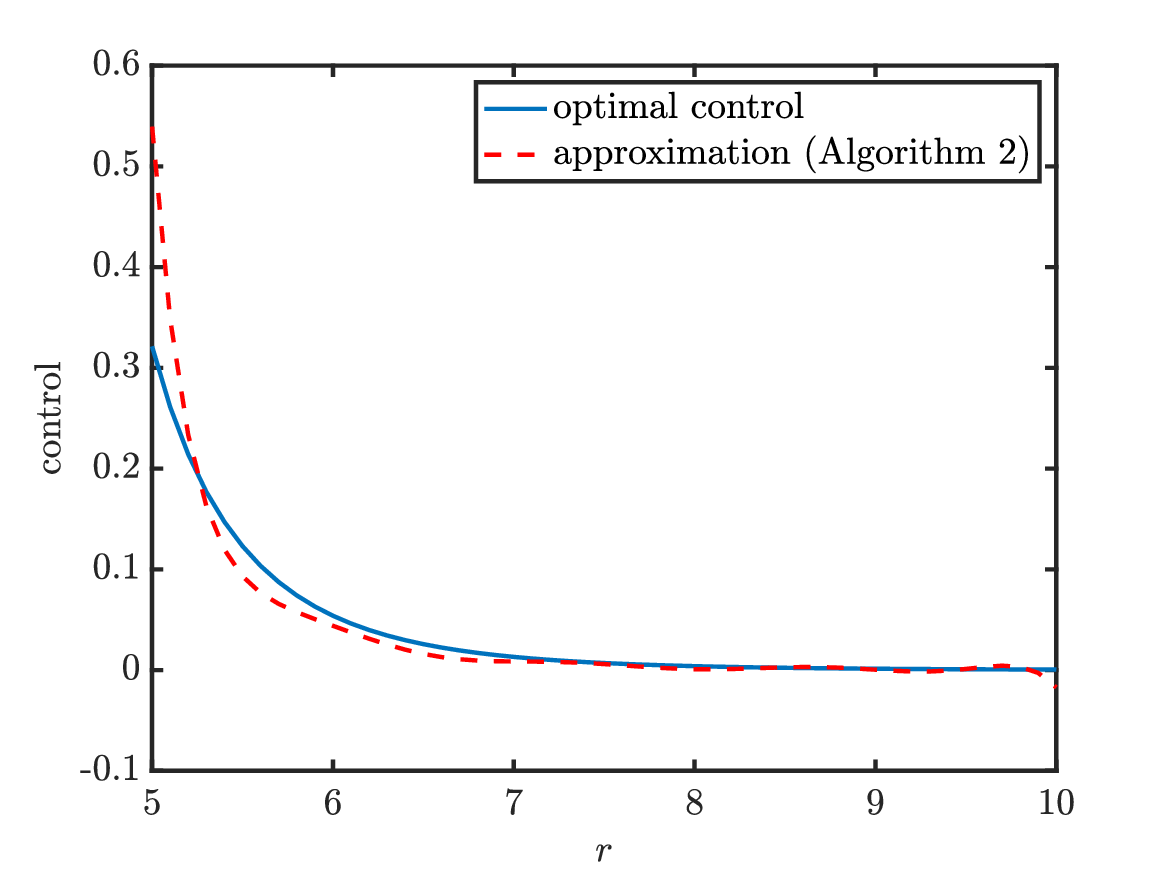}
    \caption{API approximation of the committor based on the second moment  representation of the committor.}
    \label{fig:API2}
\end{figure}

We repeat the simulation, but now use the minimum second moment representation of the committor function. The simulation parameters are the same as before, except the regularization parameter $\epsilon$. Figure \ref{fig:API2} shows the committor approximation (upper panel) and the resulting control policy (lower panel) for a regularization parameter $\epsilon=5$. It can be seen that the resulting approximation of the committor is biased, which is a consequence of the concavity of the square root function: If convergence is reached, then Algorithm \ref{algo:PI2} yields an approximation $\widehat{Q}=\frac{1}{N}\sum_{i=1}\widehat{Q}_{i}$ of the squared committor function $\widehat{\Psi}^2$ since, by Jensen's inequality, 
\begin{equation}\label{biasedEstimateQ}
\widehat{\Psi}=\sqrt{\widehat{Q}}\quad \Longrightarrow \quad \sqrt{\frac{1}{N}\sum_{i=1}^N \widehat{Q}_i} \ge \frac{1}{N}\sum_{i=1}^N \sqrt{\widehat{Q}_i} = \frac{1}{N}\sum_{i=1}^N \widehat{\Psi}_i =\widehat{\Psi}\,,
\end{equation}
provided that $\widehat{Q}_i$ is an unbiased estimator of the minimum second moment $\widehat{\Psi}_i^2$ under the optimal control. This explains the visible overestimation of the small committor probabilities by the API approximation, when using Algorithm \ref{algo:PI2}. Note that the MGF based estimator is biased too, since the value function equals the log committor where the log function is concave too. Yet the estimator variance is much closer to zero, which reduces the bias of the estimator.  

We shall briefly comment on the influence of the regularization parameter. For values of $\epsilon$ between 0.5 and 5, API still converges, but relatively slowly. This behavior is in accordance with Lemma \ref{lem:mono2}: for too small values of $\epsilon$, Algorithm \ref{algo:PI2} diverges as the controls become too large, and it converges when the regularization parameter is large enough, thus forcing the control to stay sufficiently small in a neighborhood of $R_1$. Numerically, this is illustrated in Figure \ref{fig:API-eps} that shows the relative cost value $|Q^{k+1}-Q^k|$ in the Euclidean norm where, for comparison, the relative cost values have been normalized so that the first value is always equal to one. (The actual cost values depend on $\epsilon$.)  

\begin{figure}
    \centering
    \includegraphics[width=0.65\linewidth]{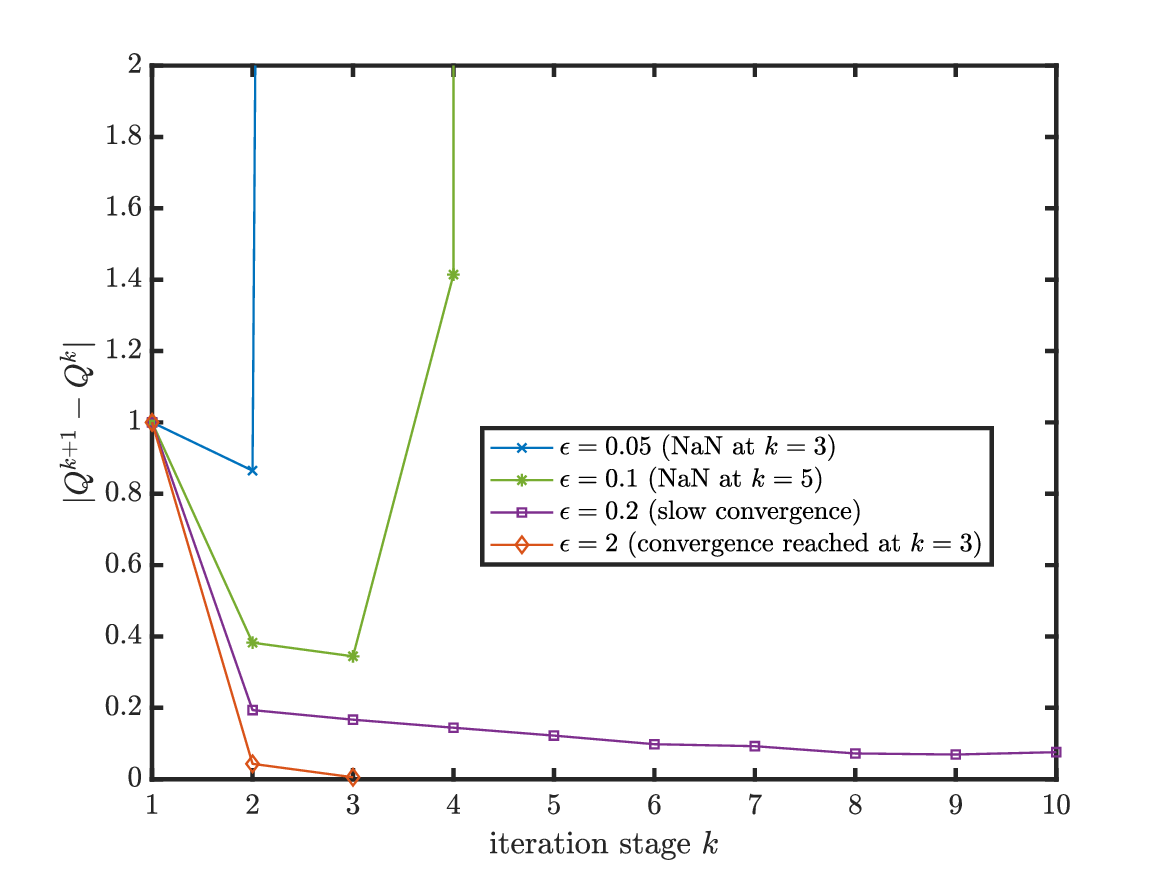}
    \caption{Influence of the regularization parameter $\epsilon$ on the convergence or divergence of Algorithm \ref{algo:PI2}. If the regularization is too weak then the operator in the linearized HJB equation ceases to be negative semidefinite, which leads to an exponential growth and divergence of the cost values.}
    \label{fig:API-eps}
\end{figure}

\section{Discussion: exit problem}\label{sec:exit}

While both SOC representations of the rare event simulation problem, share the zero variance property, they are found to have reverse effects in terms of the likelihood of rare events; cf.~Example \ref{ex:2wellCommittor}. Specifically, we have found that in the log transformation based representation the likelihood of a rare event is \emph{increased}, whereas in the square root based transformation, the likelihood of the rare event is \emph{decreased}. For sampling problems that involve unbounded random stopping times and thus sample trajectories of indefinite length, the second formulation can lead to counter-intuitive behavior of importance sampling schemes, because it can lead to a massive  increase of the total computational cost, despite variance reduction. 

\begin{example}[First exit times] 
An extreme case is the first exit time from a set. Let $\tau$ be the first exit time from the (open and bounded) set $D\subset R^d$. This is our random variable of interest. In the log transformation setting, with $f=1$ and $g=0$, the value function equals the scaled log MGF of $\tau$:
\begin{equation}\label{tauCGF}
-\lambda^{-1}\log\E^x\!\left[e^{-\lambda\tau}\right] = \min_{u\in\cA}\E^x\!\left[ \tau^u + \frac{1}{2\lambda} \int_0^{\tau^u} |u_{s}|^{2}ds\right].
\end{equation}
Clearly, since $u=0$ is an admissible control, $\E^x[\tau^{u^*}]\le \E^x[\tau]$ must hold under the optimal control $u^*$.  As a consequence, the optimal control minimizes the variance \emph{and} reduces the average length of the sample trajectories. Note that under the hypothetical optimal control (i.e., ignoring discretization errors) that leads to a zero variance estimator, only one trajectory needs to be computed. Then, given the log moment generating function (\ref{tauCGF}), moments of $\tau$ can be extracted by differentiation \wrt the parameter $\lambda$, e.g.
\begin{equation}
\E^x[\tau] = -\frac{d}{d\lambda}\left. \log\E^x\!\left[e^{-\lambda\tau}\right]\right|_{\lambda=0}
\end{equation} 
or, equivalently,
\begin{equation}\label{limitCGF}
\E^x[\tau] = -\lim_{\lambda\searrow 0}\lambda^{-1}\log\E^x\!\left[e^{-\lambda\tau}\right]\,.
\end{equation}

On the other hand, the SOC formulation based on the second moment minimization leads to an optimal control with control law (cf.~\cite{awad2013})
\begin{equation}
c^*(x) = \sigma^\top\nabla\log\E^x[\tau]\,,
\end{equation}
which is pointing towards the interior of the set $D$ and which is diverging at the boundary (since $\E^x[\tau]$ is decreasing towards the boundary and is zero on $\partial D$). Hence, the optimal control is preventing the dynamics from ever leaving the set $D$, which leads to a zero variance importance sampling scheme with infinite run time, because $\tau^{u^*}=\infty$ with probability one. See \cite[Sec.~6.4.2]{anum} for details.
\end{example}

\paragraph{Computational aspects: mean first exit times}

The example shows that that sampling mean first exit (or: passage) times or transition rates by importance sampling is not a trivial task: On the one hand, it seems that the log transform formulation based on the moment generating function is the method of choice because of the pathologies of the second moment minimization. On the other hand, computing moments requires evaluating the value function at $\lambda=0$, but the smaller $\lambda$ the stronger the penalization of the control; for $\lambda\to 0$, the optimal control becomes $u^*=0$. 
If $\lambda>0$, one can choose between computing (a) the value function $v_1$ as a proxy for the scaled log MGF or (b) computing the MGF by reweighting (i.e. standard importance sampling). 

Assuming that a numerical scheme is used to approximate the optimal control or the value function for $\lambda>0$, the variance will not be exactly zero, and option (a) inevitably yields a biased approximation of the MGF (since standard Monte Carlo will produce an unbiased approximation of the log MGF); see also (\ref{biasedEstimateQ}). Numerical methods beyond the API algorithm of Section \ref{sec:API} include, e.g. neural network approximations of the HJB equation \cite{JentzenEtal2017,nuesken2021}, least-squares Monte Carlo for backward SDEs \cite{GobetTurkedjiev2016,chaosRESIM}, or stochastic optimization \cite{kerimkulov2025mirror,lie2021frechet}, to mention just a few alternatives. 

As for option (b), we could build an importance sampling estimator of the MGF using the equality
\begin{equation}\label{reweightMGF}
\E^x\!\left[e^{-\lambda \tau}\right] = \E^x\!\left[e^{-\lambda \tau^{\hat{u}}}\hat{\mathscr{L}}^{-1}_{\tau^{\hat{u}}}(X^{\hat{u}})\right] 
\end{equation}  
with 
\begin{equation}
\hat{\mathscr{L}}_t = \exp\!\left(-\frac{1}{2}\int_{0}^{t}\vert \hat{u}_s\vert^2 ds + \int_{0}^{t} \hat{u}_s dB_s \right)
\end{equation}
being a numerical approximation of the optimal likelihood ratio. Such an estimator, however, will be very sensitive to the approximation of the optimal control and, in high dimensions, become essentially useless when the approximations are not close-to-perfect \cite{suboptimalIS}.  Generally, using option (b) is not recommended for high-dimensional problems \cite{agapiou2015importance,bengtsson2005curse}. 

We describe yet another alternative to compute mean first exit times.

\paragraph{From importance sampling to control variates}

It turns out that the even though the log transformation based SOC problem has a trivial solution in the limit $\lambda\to 0$, the optimal importance sampling estimator has a nontrivial limit and retains its zero variance property, even though the average length of the trajectories is not reduced as the control goes to zero. Indeed, combining (\ref{limitCGF})  with (\ref{reweightMGF}), it follows by dominated convergence that  
\begin{equation}
	\E^x[\tau] = \E^x\!\left[\tau^0 +\lim_{\lambda\searrow 0}\lambda^{-1}\log\hat{\mathscr{L}}_{\tau^{\hat{u}}}(X^{\hat{u}}) \right]
\end{equation}
where $\tau^0=\tau$ is the exit time of the uncontrolled dynamics. The limit expression inside the expectation converges to a zero-mean random variable; it acts as a control variate and annihilates the variance. 
The next theorem that has been proved in \cite{hartmann2024riskneutral} formalizes this observation; it can be seen as a risk-neutral limit of the log transformation based certainty-equivalence principle.   

\begin{theorem}\label{thm:is2cv}
    Let $\mathscr{L}^*$ be the likelihood ratio associated with the change of measure from the reference probability measure $P$ to the zero-variance probability measure $P^{u^*}$ according to Lemma \ref{lem:socGibbs}. 
Then, with probability one, 
\begin{equation}\label{limitCGF2}
    - \lim_{\lambda\searrow 0}\frac{1}{\lambda}\log\E^x\!\left[e^{-\lambda W(X^{u^*})-\log \mathscr{L}_{\tau^{u^*}}^*}\right] = W(X) - M_\tau(X),
\end{equation}
where $X$ solves the uncontrolled SDE (\ref{sde0}) with initial value $X_0=x$, and 
\begin{equation}
    M_t(X) = \int_0^t \sigma^T\nabla\Psi(X_s)\cdot dB_s
\end{equation}
is a martingale with the property 
\begin{equation}
\E[M_\tau(X)] = 0\,,\quad {\rm Var}(W(X) + M_\tau(X)) = 0\,.
\end{equation}
The function $\Psi$ is the solution of the linear boundary value problem (\ref{linBVP}) with $F=0$, $H=f$ and $G=g$. As a consequence, the right-hand side of (\ref{limitCGF2}) is an unbiased zero-variance estimator of $\E^x[W(X)]$.
\end{theorem}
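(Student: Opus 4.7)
The plan is to identify the random variable whose expectation appears on the left-hand side, expand it around $\lambda=0$, and then recognize the limiting random variable via Itô's formula and the Feynman-Kac representation of $\Psi$. First I would use the explicit form of the optimal feedback control from Lemma \ref{lem:socGibbs} and Corollary \ref{cor:logtransform}, namely $u^*_s=c^*_1(X^{u^*}_s)=-\lambda\,\sigma^\top\nabla v_1(X^{u^*}_s)$ where $v_1(x;\lambda)=-\lambda^{-1}\log\Psi_\lambda(x)$ with $\Psi_\lambda(x)=\E^x[e^{-\lambda W(X)}]$. From Girsanov's theorem, the log-likelihood ratio becomes
\begin{equation*}
\frac{1}{\lambda}\log\mathscr{L}^*_{\tau^{u^*}}(X^{u^*}) = -\frac{\lambda}{2}\int_0^{\tau^{u^*}}\bigl|\sigma^\top\nabla v_1(X^{u^*}_s)\bigr|^2\,ds - \int_0^{\tau^{u^*}}\sigma^\top\nabla v_1(X^{u^*}_s)\cdot dB_s,
\end{equation*}
so that (removing the logarithm from the integrand in \eqref{limitCGF2} above) the quantity whose limit is taken equals $W(X^{u^*})+\lambda^{-1}\log\mathscr{L}^*_{\tau^{u^*}}(X^{u^*})$.

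Next I would pass to the limit $\lambda\searrow 0$. Since $\Psi_\lambda\to 1$ and $v_1(\cdot;\lambda)\to\Psi$ smoothly (this follows from differentiability of $\lambda\mapsto\Psi_\lambda$ under suitable moment bounds on $W$, with $\nabla v_1\to\nabla\Psi$), the control $u^*$ vanishes at rate $O(\lambda)$. Therefore, by Girsanov together with standard SDE stability, $X^{u^*}\to X$ on the relevant time interval, the quadratic variation term above vanishes as $\lambda\to 0$, and the stochastic integral converges to $\int_0^\tau\sigma^\top\nabla\Psi(X_s)\cdot dB_s=M_\tau(X)$ (e.g.\ by Itô isometry on finite-horizon truncations followed by uniform integrability as the exit time is reached). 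This yields
\begin{equation*}
-\lim_{\lambda\searrow 0}\frac{1}{\lambda}\log\!\left(e^{-\lambda W(X^{u^*})-\log\mathscr{L}^*_{\tau^{u^*}}(X^{u^*})}\right)=W(X)-M_\tau(X)\qquad \text{a.s.}
\end{equation*}

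Finally, I would identify this limit with the asserted zero-variance control variate. Apply Itô's formula to $\Psi(X_t)$, using that $\cL\Psi=-f$ in $D$ and $\Psi=g$ on $\partial D$ by the Feynman-Kac characterization of the linear BVP \eqref{linBVP} with $F=0$, $H=f$, $G=g$; stopping at $\tau$ gives
\begin{equation*}
g(X_\tau)=\Psi(x)-\int_0^\tau f(X_s)\,ds+M_\tau(X),
\end{equation*}
hence $W(X)-M_\tau(X)=\Psi(x)=\E^x[W(X)]$, which is deterministic. Unbiasedness, zero variance, and $\E[M_\tau(X)]=0$ follow immediately, provided one checks that $M$ is a true martingale up to $\tau$ (using $\nabla\Psi$ bounded on $D$ combined with $\E^x[\tau]<\infty$, or a localization argument). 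The main obstacle is precisely this justification of the limit passage: controlling the stochastic integral uniformly in $\lambda$ across the random stopping time $\tau^{u^*}$, which requires either boundedness of $\nabla v_1(\cdot;\lambda)$ uniformly in small $\lambda$ on $\overline{D}$, or a dominated-convergence-type bound on the exponential moments of $\tau^{u^*}$; these are the ingredients provided in \cite{hartmann2024riskneutral}.
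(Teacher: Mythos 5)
Your proposal follows the same overall architecture as the paper's sketch: expand the Girsanov log-likelihood ratio explicitly using $u^*_s = -\lambda\sigma^\top\nabla v_1(X^{u^*}_s)$, send $\lambda\searrow 0$ using $v_1^\lambda\to\Psi$, $\nabla v_1^\lambda\to\nabla\Psi$, and then close the argument via It\^o's formula applied to $\Psi(X_t)$ combined with the linear BVP $\cL\Psi=-f$, $\Psi|_{\partial D}=g$ to obtain the a.s. identity $W(X)-M_\tau(X)=\Psi(x)$. These steps are all correct and match the paper's.

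The one place your route genuinely diverges is in evaluating the left-hand side of \eqref{limitCGF2}. You pass to the limit \emph{inside} the expectation (pathwise convergence of $W(X^{u^*})+\lambda^{-1}\log\mathscr{L}^*_{\tau^{u^*}}$ to $W(X)-M_\tau(X)$), which then requires a dominated-convergence or uniform-integrability argument to move the limit past $\E^x$ and $\log$; you correctly flag this as the main obstacle. The paper sidesteps this entirely via the exact change-of-measure identity
$\E^x\big[e^{-\lambda W(X^{u^*})-\log\mathscr{L}^*_{\tau^{u^*}}}\big]=\E^x\big[e^{-\lambda W(X)}\big]$
valid for every $\lambda>0$ (this is just Remark~\ref{rem:reweighting} applied to $S=e^{-\lambda W}$), so that the left-hand side is simply the scaled cumulant generating function of $W(X)$ and its limit as $\lambda\searrow 0$ is $\E^x[W(X)]$ by elementary calculus, with no need to control the stochastic integral uniformly in $\lambda$ up to the random stopping time. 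Your approach is more laborious but also more informative, since it explains \emph{why} the limit takes the form $W(X)-M_\tau(X)$ rather than merely establishing that both sides agree numerically with $\E^x[W(X)]$; note, however, that the paper does record your observations (b), (c) as structural remarks before invoking the identity, so the two proofs are closer than they might first appear.
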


\begin{proof}[Sketch of proof]
	Using that 
	\begin{equation}
	\mathscr{L}^*_t = \exp\!\left(-\frac{1}{2}\int_{0}^{t}\vert u^*_s\vert^2 ds + \int_{0}^{t} u^*_s dB_s \right),
\end{equation}
with $u^*_t = -\lambda\sigma^\top \nabla v_1(X_t^{u^*})$ where $v_1=v_1^\lambda$ is the solution to (\ref{PDE1}), and letting $\lambda\searrow 0$ in (\ref{PDE1}), we observe that:
\begin{itemize}
	\item[(a)] the nonlinear HJB equation (\ref{PDE1}) turns into the linear  boundary value problem (\ref{linBVP}) with coefficients $F=0$, $G=g$ and $H=f$ 
	\item[(b)] $v_1^\lambda\to\Psi$ and $\nabla v_1^\lambda\to \nabla\Psi$ where $\Psi$ solves the limit equation (\ref{linBVP}) 
	\item[(c)] the control $u^*$ converges to zero at rate $\lambda$ and $\lambda^{-1}\log\mathscr{L}$ converges to the martingale $M$ with the integrand $\nabla\Psi$.
\end{itemize}
The zero-variance property follows from It\^o's formula, using the fact that $\Psi$ solves the linear boundary value problem (\ref{linBVP}):  
\begin{equation}
\Psi(X_\tau) - \Psi(x) = \int_0^\tau \cL\Psi(X_s)\,ds + \int_0^\tau \sigma^T\nabla\Psi(X_s)\cdot dB_s\,,\quad x\in D
\end{equation}
where $\Psi(X_\tau)=g(X_\tau)$ and $ \cL\Psi(X_s)=-f(X_s)$ for $s<\tau$. Hence, almost surely,  
\begin{equation}
\Psi(x) = \underbrace{\int_0^\tau f(X_t)\,dt + g(X_\tau)}_{=W(X)} - \underbrace{\int_0^\tau \sigma^T\nabla\Psi(X_t)\cdot dB_t}_{=M_\tau(X)}\,.
\end{equation}
Since $\E^x\big[\exp\big(-\lambda W(X^{u^*})-\log \mathscr{L}_{\tau^{u^*}}^*\big)\big]=\E^x\big[\exp(-\lambda W(X))\big]$ by definition of $\mathscr{L}^*$, the left-hand side in (\ref{limitCGF2}) converges to $\E^x[W(X)]$, which concludes the sketch of the proof. For details we refer to the arXiv version of \cite{hartmann2024riskneutral}. 
\end{proof}

In contrast to the importance sampling estimator with reweighting, the estimators based on (\ref{limitCGF2}) are relatively robust under bad approximations of the control variate term $M_\tau$. This is illustrated in the next example. 

\begin{example}[First exit times, cont'd]\label{ex:mfetOU}
	We consider the exit problem for a reversible Ornstein-Uhlenbeck (OU) process 
	\begin{equation}
	dX_t = -AX_t\,dt + \sqrt{2\beta^{-1}}dB_t\,,\quad X_0=x
	\end{equation}
	in dimension $d=100$, where
	\begin{equation}
		A = \begin{pmatrix}
		2 & -1 & 0 & \ldots & 0 \\ -1 &  2  & -1 & \ldots & 0\\ 0 & \ddots & \ddots & \ddots & 0\\ \vdots & \vdots & \ddots & \ddots  & -1 \\ 0 & \ldots & 0 & -1 & 2  
	\end{pmatrix}\in\R^{d\times d}\,.
	\end{equation}
	We consider the exit from the set $D=\{x\in\R^d\colon |x|<R\}$ and  compute the mean first exit time (MFET) $\Psi(x) = \E^{x}[\tau]$ for the process starting from $x=0$. Since the exact control variate is not available (since we do not know $\Psi$), we replace it by the  approximation 
	\[
	\Phi(x)= \frac{1}{d}\frac{R^2 - |x|^2}{2\beta^{-1}}
	\]
	that is valid in the radially symmetric case if $A$ is similar to a scalar multiple of the identity and $d\to\infty$; see \cite{sphericalOU}. Using $\Phi$ instead of $\Psi$ as integrator, our suboptimal control variate estimator becomes
	\begin{equation}
		\hat{\Psi}_N(x) = \frac{1}{N}\sum_{i=1}^N\left(\tau_i + \sqrt{2\beta^{-1}}\int_0^{\tau_i} \nabla\Phi(X_{t,i})\cdot dB_{t,i}\right)\,,
	\end{equation}
	where the sum is over $N$ independent realizations of the process $X$.
	Figure \ref{fig:exitOU} shows Monte-Carlo estimates of the MFET for $\beta=10$ for a crude Monte Carlo approximation (orange curve) and the asymptotic control variate approximation $\hat{\Psi}_N$ (green curve), each for a sample size $N=10$; for comparison, the Figure also shows the reference Monte Carlo approximation for $N=1000$ (blue curve).

\begin{figure}
	\centering
	\includegraphics[width=0.65\textwidth]{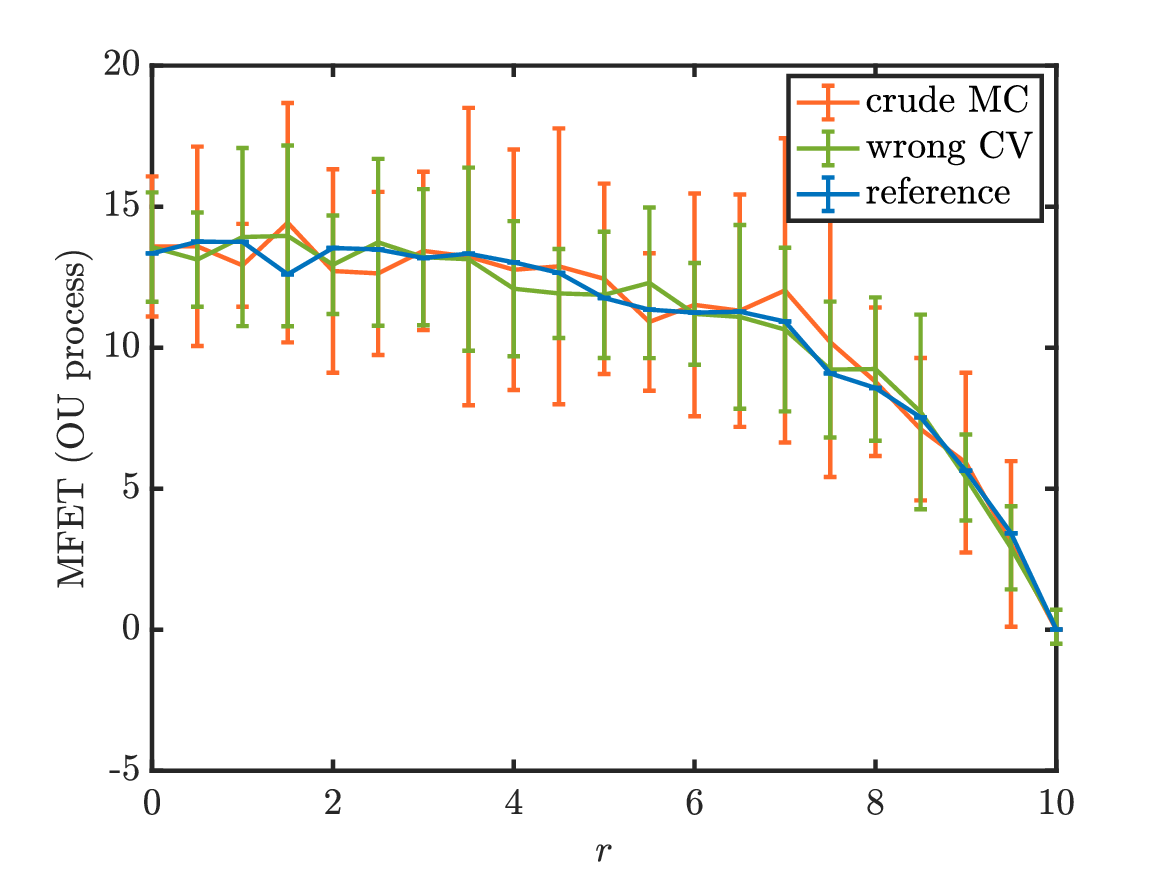}
	\caption{MFET estimates in dimension $d=100$ and their 95\% confidence intervals: crude Monte Carlo (orange) and control variates (green) with suboptimal function $\tilde{\Psi}$ for $N=10$, and the reference  Monte Carlo approximation (blue) for $N=1000$. All simulations were done using an Euler-Maruyama discretization of the SDE and the martingale with steps size $\Delta t=10^{-3}$.}\label{fig:exitOU}
\end{figure}
\end{example}

We emphasize that the robustness of the control variate estimator under suboptimal controls that has been empirically observed in Example \ref{ex:mfetOU} is in stark contrast to the brittleness of importance sampling in high dimensions \cite{bengtsson2005curse,agapiou2015importance}. The suboptimality issue is especially important if the length of the trajectories used for sampling is not bounded \cite{suboptimalIS}. 
Yet, we believe that the findings are not confined to exit time computations. In practice, a numerical approximation of the control variate will often be obtained from solving a simplified, lower-dimensional equation or surrogate model, and first steps towards understanding the properties of suboptimal control variates have been undertaken in \cite{newton1994variance}; see also ~\cite{belomestny2024,roussel2019,oates2023}.

\section{Conclusions}\label{sec:fin}

We have studied importance sampling for rare events from the perspective of certainty-equivalent expectations. Specifically, two different representations of the optimal importance sampling distribution for an SDE that both lead to zero variance estimators for the quantity of interest have been looked at in detail; both are based on nonlinear transformations of the random variable of interest that can be interpreted as certainty-equivalent control problem: a representation based on a logarithmic transformation of the moment generating function (MGF) of the random variable of interest, and another one based on a square root transformation of the second moment. 

For both representations, we have devised approximate policy iteration (API) schemes and analyzed their convergence. Both schemes are monotonic in terms of the cost functional and converge to the correct optimal control, where the square root representation requires some regularization to enforce converge. For a high dimensional committor problem with spherical symmetry, we have tested both representations and associated API schemes numerically and confirmed empirically that they converge to a biased estimate of the committor function and the optimal control that generates the optimal importance measure. 

Finally, we have discussed the notoriously difficult exit problem that is relevant for the computation of mean first passage times and transition rates. We have demonstrated that naive minimization of the second moment leads to estimators that formally have zero variance, but generate infinitely long trajectories with probability one. The log transformation based formulation can be used here to devise robust low-variance estimators for mean first exit or passage times. These estimators are, however, not importance sampling estimators, but control variates. in particular, they are based on uncontrolled dynamics. The robustness of the control variate scheme for mean first exit times has been demonstrated numerically for a high-dimensional Ornstein-Uhlenbeck process. 

The numerical examples, though high-dimensional, are admittedly simple, but they reveal the key features of the stochastic control formulations and numerical algorithms. Future work ought to address the question of suboptimality of the corresponding statistical estimators when reduced-order or surrogate models are used. This is especially relevant for molecular dynamics applications that are very high-dimensional, but  often admit low-dimensional representations in terms of suitable feature variables or reaction coordinates.

\section*{Acknowledgement}

This work was supported by the DFG Collaborative Research Center 1114 ``Scaling
Cascades in Complex Systems'', project no. 235221301, projects A05 ``Probing Scales in Equilibrated Systems by Optimal Nonequilibrium Forcing'' and B03 ``Multilevel coarse graining of multiscale problems''.

\appendix

\section{Generalized stochastic optimal control}\label{sec:genSOC}

In this section we state a general SOC problem on an indefinite time horizon, special forms of which are extensively used throughout the paper. For details regarding the regularity requirements on the coefficients of the cost functional, we refer to \cite[Sec.~3]{Pham} and references therein.

\begin{definition}[Generalized SOC problem]\label{def:generalSOC}
	Let 
\begin{equation}	
	J(t,x,u)=\E^{t,x}\!\left[\int_{t}^{T} f(r,X^u_r,u_r)\Gamma(r)dr+g(T,X^u_T)\Gamma(T)\right],	
\end{equation} 
be an objective function where 
 $\Gamma(s):=exp\!\left(-\int_{t}^{s}\beta(r,X^u_r,u_r)dr\right)$, with $f,g,\beta$ being measurable functions (with conditions added in the course of this section). We define the generalized SOC problem on a finite time horizon by
	\begin{equation}
		\begin{split}
	&\inf_{u\in \cU} J(t,x,u)\\
	&\text{s.t.}~ 	dX^u_s=(b(X^u_s)+\sigma(X^u_s)u_s)ds+\sigma(X^u_s)dB_s,\quad X^u_t=x	,
	\end{split}
	\end{equation}
with value function $v(t,x):=\min\limits_{u} J(t,x,u)$. 
\end{definition}

\begin{lemma}[It{\^o}'s formula]\label{Ito}
	Let $a\in \cA$ be fixed. It{\^o}'s formula applied to 
	\begin{equation}\Gamma(t+h)v(t+h,X^a_{t+h})\end{equation} yields 
	\begin{align*}
		& \Gamma(t+h)v(t+h,X^a_{t+h})
		=v(t,x)\\
		& +\int_{t}^{t+h}\Gamma(r)
		\left\lbrace
		-\beta(r,X^a_r,a)v(r,X^a_r)+\frac{\partial}{\partial r}v(r,X^a_r)+\cL^av(r,X^a_r)
		\right\rbrace dr  \\
		& +
		\int_{t}^{t+h} 	\Gamma(r)\frac{\partial}{\partial x} \sigma(X^a_r) dB_r.
	\end{align*}
	\end{lemma}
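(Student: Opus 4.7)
The proof is a direct application of It\^o's formula combined with the ordinary product rule. My plan is to treat $\Gamma$ and $v(\cdot, X^a_\cdot)$ separately, compute each differential, and then apply the (pathwise) integration-by-parts formula, exploiting the fact that $\Gamma$ has finite variation and hence no quadratic covariation with the diffusion part of $v(\cdot, X^a_\cdot)$.

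First I would compute $d\Gamma(s)$. Since $\Gamma(s) = \exp\!\left(-\int_t^s \beta(r, X^a_r, a)\,dr\right)$ is absolutely continuous in $s$ for each $\omega$, the ordinary chain rule gives
\begin{equation*}
d\Gamma(s) = -\beta(s, X^a_s, a)\,\Gamma(s)\,ds,\qquad \Gamma(t)=1.
\end{equation*}
Next I would apply the standard It\^o formula to $v(s, X^a_s)$, assuming $v\in C^{1,2}$ and that the coefficients satisfy the usual regularity so that $X^a$ is a strong solution. Using $dX^a_s = (b(X^a_s)+\sigma(X^a_s)a)ds + \sigma(X^a_s)dB_s$ and recognising the operator
\begin{equation*}
\cL^a v = (b+\sigma a)\cdot\nabla v + \tfrac{1}{2}\sigma\sigma^\top\colon \nabla^2 v,
\end{equation*}
one obtains
\begin{equation*}
dv(s, X^a_s) = \left(\tfrac{\partial}{\partial s}v + \cL^a v\right)\!(s, X^a_s)\,ds + \nabla_x v(s, X^a_s)^\top \sigma(X^a_s)\,dB_s.
\end{equation*}

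The final step is the product rule. Because $\Gamma$ is of bounded variation its covariation with the martingale part of $v(\cdot, X^a_\cdot)$ vanishes, so
\begin{equation*}
d\bigl(\Gamma(s) v(s, X^a_s)\bigr) = \Gamma(s)\,dv(s, X^a_s) + v(s, X^a_s)\,d\Gamma(s).
\end{equation*}
Substituting the two expressions above and collecting terms produces the integrand $\Gamma(s)\{-\beta v + \partial_s v + \cL^a v\}$ in front of $ds$ and $\Gamma(s)\nabla_x v(s,X^a_s)^\top\sigma(X^a_s)$ in front of $dB_s$. Integrating from $t$ to $t+h$ and using $\Gamma(t)v(t, X^a_t) = v(t, x)$ gives the claimed identity.

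There is no real obstacle: the only point requiring care is ensuring the regularity hypotheses (joint $C^{1,2}$ regularity of $v$ on $[0,T]\times \R^d$, local boundedness of $\beta$, and the standard linear-growth/Lipschitz conditions on $b,\sigma$ which are already in force) so that It\^o's formula applies and the stochastic integral is at least a local martingale on $[t, t+h]$. Under the integrability assumptions implicit in Definition \ref{def:generalSOC}, a localisation argument followed by dominated convergence would upgrade it to a true martingale where needed in subsequent uses of the lemma, but that is not part of the present statement.
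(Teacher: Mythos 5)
Your proof is correct, and it takes a slightly different route from the paper's. You argue via the semimartingale product rule: compute $d\Gamma(s)$ pathwise (ordinary chain rule, since $\Gamma$ is absolutely continuous), apply It\^o's formula to $v(s,X^a_s)$ separately, and then combine through $d(\Gamma_s\,v(s,X^a_s)) = \Gamma_s\,dv(s,X^a_s) + v(s,X^a_s)\,d\Gamma_s$, noting the covariation term vanishes because $\Gamma$ is continuous and of finite variation. The paper instead defines $h(s,x):=\Gamma(s)v(s,x)$, introduces an auxiliary process $Y_s=s$, and expands $dh(Y_s,X^a_s)$ in one go, absorbing the pathwise derivative $\Gamma'(s)=-\beta\Gamma(s)$ into the partial derivative $h_y$. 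The two computations are equivalent and land on the same identity, but your modular formulation is arguably cleaner: treating $h$ as a deterministic function of $(s,x)$ is a notational abuse, since $\Gamma(s)$ is a history-dependent random quantity rather than a function of the current time and state, and the product-rule route sidesteps that issue entirely. One further remark in your favour: you correctly identify the stochastic integrand as $\Gamma(r)\,\nabla v(r,X^a_r)^\top\sigma(X^a_r)$, whereas the displayed lemma statement omits the factor $v$ in that term (a typo which the paper's own computation implicitly corrects at the end of its proof).
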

	
	\begin{proof}
		We define $h(s,x):=\Gamma(s)v(s,x)$ and $Y_s:=s$. Using the shorthands $h_y=\frac{\partial h}{\partial y}$, $h_x=\frac{\partial h}{\partial x}$, etc., we have 
		\begin{align*}
			dh(s,X^a_{s})&=dh(Y_s,X^a_{s})\\
			&=h_y(Y_s,X^a_{s})dY_s+h_x(Y_s,X^a_{s})dX^a_s\\
			& \quad +h_{xy}(Y_s,X^a_{s})d\langle X^a,Y \rangle_s
			+\frac{1}{2}h_{yy}(Y_s,X^a_{s})d\langle Y\rangle_s\\
			& \quad  +\frac{1}{2}h_{xx}(Y_s,X^a_{s})d\langle X^a\rangle_s\\
			&=h_y(Y_s,X^a_{s})ds+h_x(Y_s,X^a_{s})dX^a_s+\frac{1}{2}h_{xx}(Y_s,X^a_{s})\sigma^2(X^a_s)ds \\
			&=\left(\Gamma'(s)v(s,X^a_{s})+\Gamma(s)\frac{\partial}{\partial s}v(s,X^a_{s}) \right)ds\\
			&\quad +\Gamma(s)\frac{\partial}{\partial x}v(s,X^a_{s})\left[b(X^a_s)+\sigma(X^a_s)a~ ds+\sigma(X^a_s)dB_s \right]\\
			&\quad +\frac{1}{2}  \Gamma(s) \frac{\partial^2}{\partial x^2} v(s,X^a_{s})\sigma^2(X^a_s)ds\\
			&=\Gamma(s)  \biggl\{
			\left[ -\beta(s,X^a_s,a)v(s,X^a_{s})+\frac{\partial}{\partial s}v(s,X^a_{s})+\cL^av(s,X^a_{s})\right]ds\\ 
			&\quad \quad  \quad \quad +\frac{\partial}{\partial x}v(s,X^a_{s})\sigma(X^a_s)dB_s\biggr\},
		\end{align*}
        where we have used $dY_s=ds$, $d\langle Y\rangle_s=0$, $d\langle X^a,Y \rangle_s=0$, $d\langle X^a\rangle_s=\sigma^2(X^a_s)ds$, $\Gamma(t)=1$, $v(t,X^a_t)=v(t,x)$ and $\Gamma'(s)=-\beta(s,X_s^a,a)\Gamma(s)$.
		Hence,
		\begin{align*}
			& \Gamma(t+h)v(t+h,X^a_{t+h})
			=v(t,x)\\
			&\quad +\int_{t}^{t+h}\Gamma(r)
			\left\lbrace
			-\beta(r,X^a_r,a)v(r,X^a_r)+\frac{\partial}{\partial r}v(r,X^a_r)+\cL^av(r,X^a_r)
			\right\rbrace dr  \\
			&\quad+
			\int_{t}^{t+h} 	\Gamma(r)\frac{\partial}{\partial x} \sigma(X^a_r) dB_r.
		\end{align*}
	\end{proof}

\subsection{Indefinite time horizon}

Next we consider the objective function on a random time horizon
\begin{equation}	
	J(t,x,u)=\E^{t,x}\left(\int_{t}^{\tau} f(r,X^u_r,u_r)\Gamma(r)dr+g(\tau,X^u_\tau)\Gamma(\tau)\right),	
\end{equation}
where for simplicity $\tau$ denotes the first exit time of the process $X^u$ from some set $D$, starting from time $t$. 

\begin{lemma}[Dynamic programming principle; cf.~\cite{Pham}]
	The value function $v$ satisfies, 
	\begin{equation}\label{ineq_Pham}
		v(t,x)\leq \E^{t,x}\!\left[\int_{t}^{t+h} f(r,X^u_r,u_r)\Gamma(r)dr+v(t+h,X^u_{t+h})\right],
	\end{equation}
and every $k>t$, we have
	\begin{equation}
		v(t,x)=\inf_{u\in \cU} \E^{t,x}\!\left[\int_{t}^{k\wedge \tau} f(r,X^u_r,u_r)dr+v(k\wedge \tau,X^u_{k\wedge \tau})\right].
	\end{equation}	

\end{lemma}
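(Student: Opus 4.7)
The plan is to establish both statements via the standard dynamic programming argument: concatenate an admissible control on $[t,t+h)$ (or $[t,k\wedge\tau)$) with an (almost) optimal one afterwards, condition on $\mathcal{F}_{t+h}$ (or $\mathcal{F}_{k\wedge\tau}$), and invoke the (strong) Markov property of the controlled SDE together with a measurable selection to produce the continuation control in an admissible way. A preliminary observation that is used throughout is the multiplicative factorization of the discount factor,
\begin{equation*}
\Gamma(r) = \Gamma(s)\exp\!\left(-\int_s^r \beta(q,X_q^u,u_q)\,dq\right),\quad t\le s\le r,
\end{equation*}
which allows us to split the running cost into contributions from $[t,s]$ and $[s,\tau]$ and factor out $\Gamma(s)$ from the latter, so that the inner piece depends only on the trajectory from time $s$ onwards.

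For the inequality \eqref{ineq_Pham}, I would fix an arbitrary admissible $u\in\cU$ and an $\epsilon>0$. Using measurable selection, pick for $\omega$-almost every realization of $X_{t+h}^u$ an $\epsilon$-optimal control $u^{\epsilon}$ for the subproblem starting at $(t+h,X_{t+h}^u)$, and define $\tilde u$ to equal $u$ on $[t,t+h)$ and $u^{\epsilon}$ on $[t+h,\tau]$. By construction $\tilde u\in\cU$, so $v(t,x)\le J(t,x,\tilde u)$. Conditioning the latter on $\mathcal{F}_{t+h}$ and using the Markov property together with the factorization of $\Gamma$, the cost from $t+h$ onwards factors as $\Gamma(t+h)$ times the subproblem cost, which is at most $\Gamma(t+h)(v(t+h,X_{t+h}^u)+\epsilon)$. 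Sending $\epsilon\to 0$ yields the required upper bound on $v(t,x)$.

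For the equality at the bounded stopping time $k\wedge\tau$, the $\le$ direction is obtained by repeating the concatenation argument with $t+h$ replaced by $k\wedge\tau$, which is admissible because the strong Markov property applies at stopping times bounded by $k$. The $\ge$ direction is established directly: for any $u\in\cU$, condition $J(t,x,u)$ on $\mathcal{F}_{k\wedge\tau}$, factor $\Gamma$ as above, and bound the conditional cost from $k\wedge\tau$ onwards below by $v(k\wedge\tau,X_{k\wedge\tau}^u)$ using the defining infimum of $v$. Taking the infimum over $u$ then gives the reverse inequality, and combining the two directions yields the claimed equality.

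The main technical obstacle is the measurable selection of the $\epsilon$-optimal continuation $u^{\epsilon}$ as a function of the random state $X_{t+h}^u$ (respectively $X_{k\wedge\tau}^u$), so that the concatenation $\tilde u$ is again a member of $\cU$. This is a classical issue that can be handled by a measurable selection theorem (e.g.~Kuratowski--Ryll-Nardzewski) or by approximation with piecewise-constant controls; we refer to \cite[Chap.~3]{Pham} for the technical details. A secondary subtlety is ensuring that all conditional expectations are well-defined, which follows from the multiplicative factorization of $\Gamma$ stated above combined with standard integrability assumptions on $f$, $g$, and $\beta$.
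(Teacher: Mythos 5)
Your plan matches the paper's argument essentially step by step: the $\ge$ direction of the equality at $k\wedge\tau$ comes from conditioning $J(t,x,u)$ on $\mathcal F_{k\wedge\tau}$, using the tower property, and then bounding the continuation cost from below by $v$; the $\le$ direction (and, as a special case, the inequality \eqref{ineq_Pham}) comes from concatenating the given control with an $\epsilon$-optimal continuation, observing $v(t,x)\le J(t,x,\hat u)$, and letting $\epsilon\to 0$. So you are on the same route.

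Where you go somewhat beyond the paper is in flagging the multiplicative factorization of $\Gamma$ and the measurable-selection issue for $u^\epsilon$. The paper in fact suppresses the discount factor in its proof (the integrand is written as $f\,dr$ rather than $f\,\Gamma\,dr$, and the $\Gamma(k\wedge\tau)$ prefactor on the continuation term is not tracked), so your observation that $\Gamma(r)=\Gamma(s)\exp(-\int_s^r\beta)$ is precisely what one needs to split the cost and factor out the discount accumulated on $[t,s]$ is a genuine and necessary refinement; as stated the paper's inequality \eqref{ineq_Pham} also omits a $\Gamma(t+h)$ in front of $v(t+h,X^u_{t+h})$, which your factorization makes explicit. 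Likewise, the paper writes "for every $\epsilon>0$ and $\omega\in\Omega$ there is an admissible control $u^\epsilon$" without addressing whether the resulting $\hat u$ is adapted and admissible — you correctly identify this as the crux (measurable selection / piecewise-constant approximation) and defer to standard references, which is the honest treatment. In short: same proof skeleton, but your version names the two technical gaps the paper leaves implicit.
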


	\begin{proof}
		By the tower property of conditional expectations, 
		\begin{align*}
			& \E^{t,x}\!\left[J(k\wedge\tau,X^u_{k\wedge \tau},u)  \right]=\E^{t,x}\!\left[\E^{X^u_{k\wedge \tau}}\!\left[\int_{t}^{\tau}f(r,X^u_r,u_r)dr+g(\tau,X^u_{\tau}) \right] \right]\\
			&=\E^{t,x}\!\left[\E^{t,x}\!\left[\int_{k\wedge \tau}^{\tau}f(r,X^u_r,u_r)dr+g(\tau,X^u_{\tau})~\vrule~ \cF_{k\wedge \tau}\right]\right]\\
			&=\E^{t,x}\!\left[\int_{t}^{\tau}f(r,X^u_r,u_r)dr -\int_{t}^{k\wedge \tau}f(r,X^u_r,u_r) dr +g(\tau,X^u_{\tau}) \right]\\
			&=J(t,x,u)-\E^{t,x}\!\left[\int_{t\wedge \tau}^{k\wedge \tau}f(r,X^u_r,u_r) dr \right].
		\end{align*}
		Hence,
		\begin{align*}
			J(t,x,u)&=	\E^{t,x}\!\left[J(k\wedge\tau,X^u_{k\wedge \tau},u)+\int_{t}^{k\wedge \tau}f(r,X^u_r,u_r) dr  \right]\\
			& \geq \E^{t,x}\!\left[v(k\wedge\tau,X^u_{k\wedge \tau})+\int_{t}^{k\wedge \tau}f(r,X^u_r,u_r) dr \right],	
		\end{align*}
		and 
		\begin{equation}\label{ineq1}
			\begin{split}
			v(t,x)&=\inf_{u\in \cU}J(t,x,u)\\
			&\geq \inf_{u\in \cU} \E^{t,x}\!\left[v(k\wedge\tau,X^u_{k\wedge \tau})+\int_{t}^{k\wedge \tau}f(r,X^u_r,u_r) dr  \right].	
			\end{split}	
		\end{equation}
		Let $u$ and $\tau$ be fixed but arbitrary, then for every $\epsilon >0$ and $\omega \in \Omega$, there is an admissible control $u^{\epsilon}$ such that
		\begin{equation}\label{inf}
			v(k\wedge \tau(\omega),X^u_{k\wedge \tau(\omega)})+\epsilon \geq J(k\wedge \tau(\omega),X^u_{k\wedge \tau(\omega)},u^{\epsilon}),
		\end{equation}	
		by definition of the value function and the infimum. 
		We now define the process
		\begin{equation}
			\hat{u}_s(\omega):=
			\begin{cases}
				&u_s(\omega),\quad s\in [0,k\wedge\tau(\omega))\\
				&u_s^{\epsilon}(\omega),\quad s\in [k\wedge \tau(\omega),\tau(\omega)],
			\end{cases}	
		\end{equation}
		then
		\begin{align*}
			v(t,x)\leq J(t,x,\hat{u})&=\E^{t,x}\!\left[
			\int_{t}^{k \wedge \tau} f(r,X^{\hat{u}}_r,\hat{u}_r)dr+J(k\wedge t,X^{\hat{u}}_{k\wedge \tau},\hat{u})
			\right]\\
			&=\E^{t,x}\!\left[
			\int_{t}^{k \wedge \tau} f(r,X^{u}_r,u_r)dr+J(k\wedge t,X^{u^{\epsilon}}_{k\wedge \tau},u^{\epsilon})
			\right]\\
			&\leq  \E^{t,x}\!\left[
			\int_{t }^{k \wedge \tau} f(r,X^{u}_r,u_r)dr+v(k\wedge t,X^{u}_{k\wedge \tau})
			\right]+\epsilon,
		\end{align*}
		thus
		\begin{align*}
			v(t,x)	\leq \inf_{u\in \cU} \E^{t,x}\!\left[
			\int_{t}^{k \wedge \tau} f(r,X^{u}_r,u_r)dr+v(k\wedge t,X^{u}_{k\wedge \tau})
			\right]+\epsilon,
		\end{align*}
		and with $\epsilon \rightarrow 0$, we obtain
		\begin{align*}
			v(t,x)
			& \leq \inf_{u\in \cU} \E^{t\wedge\tau,x}\!\left[
			\int_{t \wedge \tau}^{k \wedge \tau} f(r,X^{u}_r,u_r)dr+v(k\wedge t,X^{u}_{k\wedge \tau})
			\right].
		\end{align*}
	Together with equation \eqref{ineq1}, this gives
		\begin{equation}
			v(t,x)=\inf_{u\in \cU} \E^{t,x}\!\left[
			\int_{t }^{k \wedge \tau} f(r,X^{u}_r,u_r)dr+v(k\wedge t,X^{u}_{k\wedge \tau})
			\right].
		\end{equation}
		The proves the dynamica programming principle. 
	\end{proof}

%This can also be expressed as
%\begin{equation}
%	\begin{split}
%		v(t,x)&=\inf_{u\in \cU} \E^{t,x}\left(
%		\int_{t}^{k \wedge \tau} f(r,X^{u}_r,u_r)dr+v(k\wedge t,X^{u}_{k\wedge \tau})
%		\right)\\
%		&=\inf_{u\in \cU} \E^{t,x}\left(
%		\int_{t}^{k}\mathbb{1}_{[0,\tau]} f(r,X^{u}_r,u_r)dr+v(k\wedge t,X^{u}_{k\wedge \tau})\right)\\
%		&=\inf_{u\in \cU} \E^{t,x}\left(
%		\int_{t}^{k}\mathbb{1}_{\{s<\tau\}} f(r,X^{u}_r,u_r)dr+v(k\wedge t,X^{u}_{k\wedge \tau})\right).
%	\end{split}
%\end{equation}

\subsection{Dynamic programming equations}

We formally derive the HJB equation corresponding to the generalized SOC problem of Definition \ref{def:generalSOC}. For details regarding the regularity of coefficients that guarantee existence of classical solutions, we refer to the relevant literature on nonlinear partial differential equations, e.g.~\cite{evans2022partial,flemingrishel,fleming2006,gilbarg2001elliptic}.

\begin{theorem}[HJB equations]
	% \begin{enumerate}[label=(\roman*)]	
	% 	\item 
		Let $D\subset \R^d$ be an open set with smooth boundary $\partial D$. The HJB equation for the finite time horizon is given by
		\begin{equation}
            \begin{aligned}
			0 = & \min_{a\in \cA}\Bigg \{ f(t,x,a)+\frac{\partial}{\partial t}v(t,x)-\beta(t,x,a)\cdot v(t,x)\\ & + b(t,x,a)\cdot \nabla v(x)
			+\frac{1}{2}\sigma(x)\sigma(x)^\top:\nabla_{xx}^2v(x)\Bigg\},~ x\in D, t<T\\ 
			v(T,x) = & g(T,x),~ x\in \partial D\,.
		      \end{aligned}
        \end{equation}    
		% \item 
        For the random time horizon, it reads
		\begin{equation}
            \begin{aligned}
			0 = & \min_{a\in \cA}\Bigg \{ f(t,x,a) +\frac{\partial}{\partial t}v(t,x) - \beta(t,x,a)\cdot v(t,x) \\ & +b(t,x,a)\cdot \nabla v(x)  
			 +\frac{1}{2}\sigma(x)\sigma(x)^\top:\nabla_{xx}^2v(x)\Bigg\}, ~ x\in D,~t<\tau\\
			v(t,x) = & g(t,x), ~ t>0, x \in \partial D.
		     \end{aligned}
        \end{equation}
		\end{theorem}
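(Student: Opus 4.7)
The plan is to derive the HJB equation as the infinitesimal form of the Dynamic Programming Principle (DPP) established in the preceding lemma, combined with It\^o's formula from Lemma \ref{Ito}. I will sketch the argument for the finite horizon case; the random stopping time case is entirely analogous, with $t+h$ replaced by $(t+h)\wedge\tau$, and the boundary condition $v(t,x)=g(t,x)$ for $x\in\partial D$ following because the exit time satisfies $\tau=t$ $P^{t,x}$-a.s.~whenever $X^u_t=x\in\partial D$.

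To prove the ``$\le$'' direction in the HJB equation, I would fix an arbitrary $a\in U$ and employ the constant Markov control $u_s\equiv a$ on $[t,t+h]$. Applying Lemma \ref{Ito} to $\Gamma(t+h)v(t+h,X^a_{t+h})$, taking $\E^{t,x}$, and using that the stochastic integral is a true martingale under appropriate integrability of $\sigma^\top\nabla v$, I get
\begin{equation*}
\E^{t,x}\!\left[\Gamma(t+h)v(t+h,X^a_{t+h})\right] - v(t,x) = \E^{t,x}\!\left[\int_t^{t+h}\Gamma(r)\left\{-\beta v + \partial_t v + \cL^a v\right\}(r,X^a_r)\,dr\right].
\end{equation*}
Plugging this into the DPP inequality \eqref{ineq_Pham}, dividing by $h$, and letting $h\searrow 0$ (by dominated convergence together with continuity of the integrand), yields
\begin{equation*}
0 \le f(t,x,a) + \partial_t v(t,x) - \beta(t,x,a) v(t,x) + b(t,x,a)\cdot\nabla v + \tfrac{1}{2}\sigma\sigma^\top\colon\nabla^2 v(t,x).
\end{equation*}
Taking the infimum over $a\in U$ gives one direction.

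For the ``$\ge$'' direction, I would use the $\epsilon$-optimal control construction from the proof of the DPP: for each $\epsilon>0$ pick $u^\epsilon$ with $J(t,x,u^\epsilon)\le v(t,x)+\epsilon$. Applying It\^o's formula along $X^{u^\epsilon}$ and invoking the equality form of the DPP produces the reverse inequality; after dividing by $h$, sending $h\searrow 0$, and then $\epsilon\searrow 0$, one recovers that the infimum over $a$ is $\le 0$. The terminal/boundary condition $v(T,x)=g(T,x)$ is immediate from the definition of $J$ at $t=T$.

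The main technical obstacle is the rigorous justification of the interchange of limit and infimum in the pointwise passage $h\searrow 0$, together with the assumption that $v\in C^{1,2}$, which is not automatic for this indefinite horizon problem with controlled diffusions and stopping times. In a fully rigorous treatment one would work with viscosity solutions (supersolution/subsolution halves corresponding to the two inequalities above), as in \cite{Pham}; here the statement is derived formally under the implicit regularity hypotheses, and we refer the reader to the cited literature for the precise conditions on $b$, $\sigma$, $f$, $g$, $\beta$ guaranteeing classical solvability.
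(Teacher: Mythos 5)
Your treatment of the ``$\le$'' half (fix a constant control $a$, apply Lemma \ref{Ito}, substitute into the DPP inequality \eqref{ineq_Pham}, divide by $h$, let $h\searrow 0$, then minimize over $a$) coincides step by step with the paper's argument. Where you diverge is the reverse inequality: the paper simply asserts existence of an optimal control $u^*$ with $v=J(\cdot,\cdot,u^*)$ and then appeals to the Feynman--Kac representation of $J(\cdot,\cdot,u^*)$ to conclude
\begin{equation*}
f(t,x,u^*_t)-\beta(t,x,u^*_t)\,v(t,x)+\frac{\partial}{\partial t}v(t,x)+\cL^{u^*}v(t,x)=0\,,
\end{equation*}
which together with the ``$\ge 0$ for all $a$'' inequality forces the minimum to vanish. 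You instead propose the $\epsilon$-optimal-control construction already used in the DPP lemma, which is the more standard textbook route (cf.~\cite{Pham}) and has the advantage of not presupposing that the infimum is attained; the cost is the extra care required to pass jointly to the limits $h\searrow 0$ and $\epsilon\searrow 0$ and to extract the admissible value $a$ from $u^\epsilon_t$, which you correctly identify as the technical obstacle and defer to the viscosity framework. Both derivations sit at the same formal level of rigour and produce the same dynamic programming equation, so the difference is one of packaging rather than substance; if anything, your version makes explicit an existence assumption that the paper's proof leaves implicit.
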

	% \end{enumerate}
	\begin{proof}
		By Lemma \ref{Ito}, 
		\begin{align*}
			& v(t+h,X^a_{t+h})
			=	\Gamma^{-1}(t+h)v(t,x)\nonumber\\
			& +	\Gamma^{-1}(t+h)\int_{t}^{t+h}\Gamma(r)
			\left\lbrace
			-\beta(r,X^a_r,a)v(r,X^a_r)+\frac{\partial}{\partial r}v(r,X^a_r)+\cL^av(r,X^a_r)
			\right\rbrace dr \nonumber \\
			& +	\Gamma^{-1}(t+h)
			\int_{t}^{t+h} 	\Gamma(r)\frac{\partial}{\partial x} \sigma(X^a_r) dB_r.
		\end{align*}
		Due to equation \eqref{ineq_Pham},
		we obtain by replacing $v(t+h,X^a_{t+h})$ and using It{\^o} formula (Lemma \ref{Ito}) the following after taking expectations: 
		\begin{align*}
			v(t,x)\leq & \E^{t,x}\!\left[\int_{t}^{t+h} f(r,X^u_r,a)\Gamma(r)dr+v(t+h,X^a_{t+h})\right]\\
			= &	\E^{t,x}\!\left[\int_{t}^{t+h} f(r,X^u_r,a)\Gamma(r)dr
			+\Gamma^{-1}(t+h)v(t,x) \right]\\
			& +	\E^{t,x}\!\left[\Gamma^{-1}(t+h)\int_{t}^{t+h}\Gamma(r)
			\left\lbrace
			-\beta(\cdot)v(\cdot)+\frac{\partial}{\partial r}v(\cdot)+\cL^a v(\cdot)
			\right\rbrace dr \right].
		\end{align*}
		Hence,
		\begin{align*}
			v(t,x)&\leq\E^{t,x}\!\left[\int_{t}^{t+h} f(\cdot)\Gamma(\cdot)dr
			+v(t,x)\Gamma^{-1}(t+h) \right]\\
			&\quad+	\E^{t,x}\!\left[\Gamma^{-1}(t+h)\int_{t}^{t+h}\Gamma(r)
			\left\lbrace
			-\beta(\cdot)v(\cdot)+\frac{\partial}{\partial r}v(\cdot)+\cL^av(\cdot)
			\right\rbrace dr \right],		
		\end{align*}
		and thus
		\begin{align*}
			0&\leq \E^{t,x}\!\left[\int_{t}^{t+h}f(\cdot)\Gamma(\cdot)dr+v(t,x)[\Gamma^{-1}(t+h)-1] \right]\\
			&\quad +\E^{t,x}\!\left[\Gamma^{-1}(t+h)\int_{t}^{t+h}\Gamma(r)[-\beta(\cdot)v(\cdot)+\frac{\partial}{\partial r}v(\cdot)+\cL^av(\cdot)]dr \right].
		\end{align*}
		Upon multiplication with $\frac{1}{h}$, we obtain after taking the limit $h\rightarrow 0$,\footnote{With $\lim\limits_{h\rightarrow 0}
			\frac{\widehat{f}(h)\cdot \widehat{g}(h)}{h} =\lim\limits_{h\rightarrow 0} \widehat{f}(h)\cdot \lim\limits_{h\rightarrow 0}\frac{1}{h}\widehat{g}(h)$ assuming that  both sequences converge.}
		\begin{align*}
			0&\leq f(t,x,a)\Gamma(t)+\E^{t,x}\!\left[ v(t,x)\frac{d}{dt}\Gamma^{-1}(t)\right]\\
			& +\E^{t,x}\!\left[\lim\limits_{h\rightarrow 0}\Gamma^{-1}(t+h)\cdot \lim\limits_{h\rightarrow 0}\frac{1}{h}\int_{t}^{t+h}\Gamma(r)\left\{-\beta(\cdot)v(\cdot)+\frac{\partial}{\partial r}v(\cdot)+\cL^av(\cdot) \right\}dr \right]\\%\footnotemark\\
			&=f(t,X_t^a,a)+\E^{t,x}\!\left[ \lim\limits_{h\rightarrow 0}\frac{1}{h}\int_{t}^{t+h}\Gamma(r)\left\{-\beta(\cdot)v(\cdot)+\frac{\partial}{\partial r}v(\cdot)+\cL^av(\cdot) \right\}dr\right]\\
			&= f(t,x,a)+\E^{t,x}\!\left[\Gamma(t)\left\{-\beta(t,X^a_t,a)v(t,X^a_t)+\frac{\partial}{\partial t}v(t,X^a_t)+\cL^av(t,X^a_t) \right\}\right]\\
			&=f(t,x,a)-\beta(t,X^a_t,a)v(t,X^a_t)+\frac{\partial}{\partial t}v(t,X^a_t)+\cL^av(t,X^a_t)\\
			&=f(t,x,a)-\beta(t,x,a)v(t,x)+\frac{\partial}{\partial t}v(t,x)+\cL^av(t,x),
		\end{align*}
		as $\Gamma^{-1}(t)=1$, $\frac{d}{dt}\Gamma^{-1}(t)=0$ and $\lim\limits_{h\rightarrow 0}\Gamma^{-1}(t+h)=1$.
		%$X^{u^{\ast}}_t$
		As 
		\begin{equation}
			\begin{split}	
				v(t,x) & =\inf\limits_{u\in \cU}\E^{t,x}\!\left[\int_{t}^{T} f(r,X^u_r,u_r)\Gamma(r)dr+g(T,X^u_T)\Gamma(T)\right]\\
				&=\E^{t,x}\!\left[\int_{t}^{T} f(r,X^{u^{\ast}}_r,u^{\ast}_r)\Gamma(r)dr+g(T,X^{u^{\ast}}_T)\Gamma(T)\right],
			\end{split}
		\end{equation}
		where $u^{\ast}$ denotes the optimal control, we obtain equality using Feynman-Kac formula, i.e.\ 
		\begin{equation}
			f(t,x,u^{\ast}_t)-\beta(t,x,u^{\ast}_t)v(t,x)+\frac{\partial}{\partial t}v(t,x)+\cL^{u^{\ast}}v(t,x)=0.	
		\end{equation}
		This yields the assertion. 
	\end{proof}

	\begin{remark}
	If the SDE coefficients as well as $f,g,\beta$ are time-homogeneous, then so is $v=v(x)$, and the HJB backward evolution equation reduces to a boundary value problem.    
	\end{remark}


\begin{thebibliography}{10}

\bibitem{agapiou2015importance}
{\sc Agapiou, S., Papaspiliopoulos, O., Sanz-Alonso, D., and Stuart, A.~M.}
\newblock Importance sampling: computational complexity and intrinsic dimension.
\newblock {\em Statist. Sci. 32}, 3 (2015).

\bibitem{Asmussen2013}
{\sc Asmussen, S., Dupuis, P., Rubinstein, R.~Y., and Wang, H.}
\newblock Rare event simulation.
\newblock In {\em Encyclopedia of Operations Research and Management Science}, S.~I. Gass and M.~C. Fu, Eds. Springer, Boston, 2013, pp.~1264--1279.

\bibitem{awad2013}
{\sc Awad, H.~P., Glynn, P.~W., and Rubinstein, R.~Y.}
\newblock Zero-variance importance sampling estimators for {M}arkov process expectations.
\newblock {\em Math. Oper. Res. 38}, 2 (2013), 358--388.

\bibitem{belomestny2024}
{\sc Belomestny, D., Goldman, A., Naumov, A., and Samsonov, S.}
\newblock Theoretical guarantees for neural control variates in {MCMC}.
\newblock {\em Math. Comput. Simulat. 220\/} (2024), 382--405.

\bibitem{berrone2023enforcing}
{\sc Berrone, S., Canuto, C., Pintore, M., and Sukumar, N.}
\newblock Enforcing dirichlet boundary conditions in physics-informed neural networks and variational physics-informed neural networks.
\newblock {\em Heliyon 9}, 8 (2023).

\bibitem{bertsekas2011approximate}
{\sc Bertsekas, D.~P.}
\newblock Approximate policy iteration: A survey and some new methods.
\newblock {\em Journal of Control Theory and Applications 9}, 3 (2011), 310--335.

\bibitem{BD98}
{\sc Bou{\'e}, M., and Dupuis, P.}
\newblock A variational representation for certain functionals of {B}rownian motion.
\newblock {\em Ann. Probab. 26}, 4 (1998), 1641--1659.

\bibitem{bugallo2017adaptive}
{\sc Bugallo, M.~F., Elvira, V., Martino, L., Luengo, D., Miguez, J., and Djuric, P.~M.}
\newblock Adaptive importance sampling: The past, the present, and the future.
\newblock {\em IEEE Signal Proc. Mag. 34}, 4 (2017), 60--79.

\bibitem{buhmann2000radial}
{\sc Buhmann, M.~D.}
\newblock Radial basis functions.
\newblock {\em Acta Numerica 9\/} (2000), 1--38.

\bibitem{cerou2012}
{\sc C{\'e}rou, F., Del~Moral, P., Furon, T., and Guyader, A.}
\newblock Sequential {M}onte {C}arlo for rare event estimation.
\newblock {\em Stat. Comput. 22}, 3 (2012), 795--808.

\bibitem{chang1986successive}
{\sc Chang, M.~H., and Krishna, K.}
\newblock A successive approximation algorithm for stochastic control problems.
\newblock {\em Applied mathematics and computation 18}, 2 (1986), 155--165.

\bibitem{christianen2024importance}
{\sc Christianen, M., Lam, H., Vlasiou, M., and Zwart, B.}
\newblock Importance sampling of rare events for distribution networks with stochastic loads.
\newblock In {\em 2024 Winter Simulation Conference (WSC)\/} (2024), IEEE, pp.~3590--3601.

\bibitem{ams}
{\sc Cérou, F., and Guyader, A.}
\newblock Adaptive multilevel splitting for rare event analysis.
\newblock {\em Stoch. Anal. Appl. 25}, 2 (2007), 417--443.

\bibitem{dai2022invitation}
{\sc Dai, C., Heng, J., Jacob, P.~E., and Whiteley, N.}
\newblock An invitation to sequential {M}onte {C}arlo samplers.
\newblock {\em Journal of the American Statistical Association 117}, 539 (2022), 1587--1600.

\bibitem{daipra1996}
{\sc Dai~Pra, P., Meneghini, L., and Runggaldier, W.}
\newblock Connections between stochastic control and dynamic games.
\newblock {\em Math. Control Signals Systems 9\/} (1996), 303--326.

\bibitem{devyver2014optimal}
{\sc Devyver, B., Fraas, M., and Pinchover, Y.}
\newblock Optimal {H}ardy weight for second-order elliptic operator: an answer to a problem of {A}gmon.
\newblock {\em Journal of Functional Analysis 266}, 7 (2014), 4422--4489.

\bibitem{donati2017girsanov}
{\sc Donati, L., Hartmann, C., and Keller, B.~G.}
\newblock Girsanov reweighting for path ensembles and {M}arkov state models.
\newblock {\em J. Chem. Phys. 146}, 24 (2017).

\bibitem{dupuis2012importance}
{\sc Dupuis, P., Spiliopoulos, K., and Wang, H.}
\newblock Importance sampling for multiscale diffusions.
\newblock {\em Multiscale Model. Simul. 10}, 1 (2012), 1--27.

\bibitem{Kostas2015}
{\sc Dupuis, P., Spiliopoulos, K., and Zhou, X.}
\newblock {Escaping from an attractor: Importance sampling and rest points I}.
\newblock {\em The Annals of Applied Probability 25}, 5 (2015), 2909 -- 2958.

\bibitem{DupuisWang2004}
{\sc Dupuis, P., and Wang, H.}
\newblock Importance sampling, large deviations, and differential games.
\newblock {\em Stochastics 76}, 6 (2004), 481--508.

\bibitem{DupuisWang2007}
{\sc Dupuis, P., and Wang, H.}
\newblock Subsolutions of an {Isaacs} equation and efficient schemes for importance sampling.
\newblock {\em Math. Oper. Res. 32}, 3 (2007), 723--757.

\bibitem{JentzenEtal2017}
{\sc E, W., Han, J., and Jentzen, A.}
\newblock Deep learning-based numerical methods for high-dimensional parabolic partial differential equations and backward stochastic differential equations.
\newblock {\em Communication in Mathematics and Statistics 5}, 4 (2017), 349--380.

\bibitem{vanden2006towards}
{\sc E, W., and Vanden-Eijnden, E.}
\newblock Towards a theory of transition paths.
\newblock {\em J. Stat. Phys. 123}, 3 (2006), 503--523.

\bibitem{elvira2023gradient}
{\sc Elvira, V., Chouzenoux, E., Akyildiz, {\"O}.~D., and Martino, L.}
\newblock Gradient-based adaptive importance samplers.
\newblock {\em J. Frank. Inst. 360}, 13 (2023), 9490--9514.

\bibitem{evans2022partial}
{\sc Evans, L.~C.}
\newblock {\em Partial Differential Equations}, vol.~19.
\newblock American Mathematical Society, 2022.

\bibitem{fleming2006}
{\sc Fleming, W., and Soner, H.}
\newblock {\em Controlled Markov Processes and Viscosity Solutions}.
\newblock Springer, 2006.

\bibitem{flemingrishel}
{\sc Fleming, W.~H., and Rishel, R.~W.}
\newblock {\em Deterministic and stochastic optimal control}, vol.~1.
\newblock Springer, New York, 1975.

\bibitem{gao2023transition}
{\sc Gao, Y., Li, T., Li, X., and Liu, J.-G.}
\newblock Transition path theory for {L}angevin dynamics on manifolds: {O}ptimal control and data-driven solver.
\newblock {\em Multiscale Model. Simul. 21}, 1 (2023), 1--33.

\bibitem{gilbarg2001elliptic}
{\sc Gilbarg, D., and Trudinger, N.~S.}
\newblock {\em Elliptic partial differential equations of second order}, vol.~224.
\newblock Springer, 2001.

\bibitem{GobetTurkedjiev2016}
{\sc Gobet, E., and Turkedjiev, P.}
\newblock Linear regression {MDP} scheme for discrete backward stochastic differential equations under general conditions.
\newblock {\em Math. Comput. 85}, 299 (2016), 1359--1391.

\bibitem{grafke2019numerical}
{\sc Grafke, T., and Vanden-Eijnden, E.}
\newblock Numerical computation of rare events via large deviation theory.
\newblock {\em Chaos: An Interdisciplinary Journal of Nonlinear Science 29}, 6 (2019).

\bibitem{hartmann2013characterization}
{\sc Hartmann, C., Banisch, R., Sarich, M., Badowski, T., and Sch{\"u}tte, C.}
\newblock Characterization of rare events in molecular dynamics.
\newblock {\em Entropy 16}, 1 (2013), 350--376.

\bibitem{hartmann2024riskneutral}
{\sc Hartmann, C., and J{\"o}ster, A.}
\newblock Risk-neutral limit of adaptive importance sampling of random stopping times.
\newblock {\em IFAC-PapersOnLine 58}, 17 (2024), 208--213.
\newblock (Slightly extended arXiv version with proofs available at arXiv:2402.08476).

\bibitem{chaosRESIM}
{\sc Hartmann, C., Kebiri, O., Neureither, L., and Richter, L.}
\newblock Variational approach to rare event simulation using least-squares regression.
\newblock {\em Chaos 29}, 6 (2019), 063107.

\bibitem{hartmann2014optimal}
{\sc Hartmann, C., Latorre, J.~C., Zhang, W., and Pavliotis, G.~A.}
\newblock Optimal control of multiscale systems using reduced-order models.
\newblock {\em Journal of Computational Dynamics 1}, 2 (2014), 279--306.

\bibitem{suboptimalIS}
{\sc Hartmann, C., and Richter, L.}
\newblock Nonasymptotic bounds for suboptimal importance sampling.
\newblock {\em SIAM-ASA J. Uncertain. Quantif. 12}, 2 (2024), 309--346.

\bibitem{HartmannEnt2017}
{\sc Hartmann, C., Richter, L., Sch\"utte, C., and Zhang, W.}
\newblock Variational characterization of free energy: Theory and algorithms.
\newblock {\em Entropy 19}, 11 (2017).

\bibitem{Hartmann2012}
{\sc Hartmann, C., and Sch\"utte, C.}
\newblock Efficient rare event simulation by optimal nonequilibrium forcing.
\newblock {\em J. Stat. Mech. Theor. Exp. 2012\/} (2012), P11004.

\bibitem{hartmann2018}
{\sc Hartmann, C., Sch{\"u}tte, C., Weber, M., and Zhang, W.}
\newblock Importance sampling in path space for diffusion processes with slow-fast variables.
\newblock {\em Prob. Theory Rel. F. 170}, 1 (2018), 177--228.

\bibitem{junejaHandbook}
{\sc Juneja, S., and Shahabuddin, P.}
\newblock Rare-event simulation techniques: An introduction and recent advances.
\newblock In {\em Simulation}, S.~G. Henderson and B.~L. Nelson, Eds., vol.~13 of {\em Handbooks in Operations Research and Management Science}. Elsevier, 2006, pp.~291 -- 350.

\bibitem{kerimkulov2025mirror}
{\sc Kerimkulov, B., {\v{S}}i{\v{s}}ka, D., Szpruch, {\L}., and Zhang, Y.}
\newblock Mirror descent for stochastic control problems with measure-valued controls.
\newblock {\em Stoch. Proc. Appl.\/} (2025), 104765.

\bibitem{sphericalOU}
{\sc Kersting, H., Orvieto, A., Proske, F., and Lucchi, A.}
\newblock Mean first exit times of {O}rnstein-{U}hlenbeck processes in high-dimensional spaces.
\newblock {\em J. Phys. A--Math. 56}, 21 (2023), 215003.

\bibitem{kohne2025error}
{\sc K{\"o}hne, F., Philipp, F.~M., Schaller, M., Schiela, A., and Worthmann, K.}
\newblock $l^\infty$-error bounds for approximations of the {K}oopman operator by kernel extended dynamic mode decomposition.
\newblock {\em SIAM journal on applied dynamical systems 24}, 1 (2025), 501--529.

\bibitem{lecuyerIS}
{\sc L'Ecuyer, P., Mandjes, M., and Tuffin, B.}
\newblock {\em Importance Sampling in Rare Event Simulation}.
\newblock John Wiley \& Sons, Ltd, 2009, ch.~2, pp.~17--38.

\bibitem{bengtsson2005curse}
{\sc Li, B., Bengtsson, T., and Bickel, P.}
\newblock Curse-of-dimensionality revisited: {C}ollapse of importance sampling in very high-dimensional systems.
\newblock {\em Tech Reports, Department of Statistics, UC Berkeley 696\/} (2005), 1--18.

\bibitem{lie2021frechet}
{\sc Lie, H.~C.}
\newblock Fr\'echet derivatives of expected functionals of solutions to stochastic differential equations.
\newblock {\em arXiv preprint arXiv:2106.09149\/} (2021).

\bibitem{liu2005equilibrium}
{\sc Liu, J., Pan, J., and Wang, T.}
\newblock An equilibrium model of rare-event premia and its implication for option smirks.
\newblock {\em The Review of Financial Studies 18}, 1 (2005), 131--164.

\bibitem{nagai1996bellman}
{\sc Nagai, H.}
\newblock Bellman equations of risk-sensitive control.
\newblock {\em SIAM J. Control Optim. 34}, 1 (1996), 74--101.

\bibitem{newton1994variance}
{\sc Newton, N.~J.}
\newblock Variance reduction for simulated diffusions.
\newblock {\em SIAM Journal on Applied Mathematics 54}, 6 (1994), 1780--1805.

\bibitem{nuesken2021}
{\sc Nüsken, N., and Richter, L.}
\newblock {Solving high-dimensional Hamilton–Jacobi–Bellman PDEs using neural networks: perspectives from the theory of controlled diffusions and measures on path space}.
\newblock {\em Partial Differ. Equ. Appl. 2}, 4 (2021), 1--48.

\bibitem{oeksendal2003}
{\sc {\O}ksendal, B.}
\newblock {\em Stochastic differential equations: An introduction with applications}.
\newblock Springer, Berlin, 2003.

\bibitem{pan2020adaptive}
{\sc Pan, Q., Byon, E., Ko, Y.~M., and Lam, H.}
\newblock Adaptive importance sampling for extreme quantile estimation with stochastic black box computer models.
\newblock {\em Naval Research Logistics (NRL) 67}, 7 (2020), 524--547.

\bibitem{papaioannou2016sequential}
{\sc Papaioannou, I., Papadimitriou, C., and Straub, D.}
\newblock Sequential importance sampling for structural reliability analysis.
\newblock {\em Structural safety 62\/} (2016), 66--75.

\bibitem{peherstorfer2016data}
{\sc Peherstorfer, B., and Willcox, K.}
\newblock Data-driven operator inference for nonintrusive projection-based model reduction.
\newblock {\em Computer Methods in Applied Mechanics and Engineering 306\/} (2016), 196--215.

\bibitem{pham2009continuous}
{\sc Pham, H.}
\newblock {\em Continuous-time stochastic control and optimization with financial applications}, vol.~61.
\newblock Springer, Berlin, 2009.

\bibitem{Pham}
{\sc Pham, H.}
\newblock {\em Continuous-time Stochastic Control and Optimization with Financial Applications.Stochastic Modelling and Applied Probability}.
\newblock Springer, 2009.

\bibitem{pinchover1999maximum}
{\sc Pinchover, Y.}
\newblock Maximum and anti-maximum principles and eigenfunctions estimates via perturbation theory of positive solutions of elliptic equations.
\newblock {\em Mathematische Annalen 314}, 3 (1999), 555--590.

\bibitem{puterman1981convergence}
{\sc Puterman, M.~L.}
\newblock On the convergence of policy iteration for controlled diffusions.
\newblock {\em Journal of Optimization Theory and Applications 33}, 1 (1981), 137--144.

\bibitem{ragone2018computation}
{\sc Ragone, F., Wouters, J., and Bouchet, F.}
\newblock Computation of extreme heat waves in climate models using a large deviation algorithm.
\newblock {\em Proceedings of the National Academy of Sciences 115}, 1 (2018), 24--29.

\bibitem{rogers2000diffusions}
{\sc Rogers, L. C.~G., and Williams, D.}
\newblock {\em Diffusions, Markov processes and martingales: Volume 2, It{\^o} calculus}, vol.~2.
\newblock Cambridge University Press, 2000.

\bibitem{roussel2019}
{\sc Roussel, J., and Stoltz, G.}
\newblock A perturbative approach to control variates in molecular dynamics.
\newblock {\em Multiscale Model. Simul. 17}, 1 (2019), 552--591.

\bibitem{anum}
{\sc Sch\"utte, C., Klus, S., and Hartmann, C.}
\newblock Overcoming the timescale barrier in molecular dynamics: {T}ransfer operators, variational principles and machine learning.
\newblock {\em Acta Numer. 32\/} (2023), 517--673.

\bibitem{schutte2013metastability}
{\sc Sch{\"u}tte, C., and Sarich, M.}
\newblock {\em Metastability and Markov state models in molecular dynamics}, vol.~24.
\newblock American Mathematical Soc., 2013.

\bibitem{oates2023}
{\sc South, L.~F., Oates, C.~J., Mira, A., and Drovandi, C.}
\newblock {Regularized Zero-Variance Control Variates}.
\newblock {\em Bayesian Anal. 18}, 3 (2023), 865--888.

\bibitem{tong2021extreme}
{\sc Tong, S., Vanden-Eijnden, E., and Stadler, G.}
\newblock Extreme event probability estimation using pde-constrained optimization and large deviation theory, with application to tsunamis.
\newblock {\em Communications in Applied Mathematics and Computational Science 16}, 2 (2021), 181--225.

\bibitem{Weare2012}
{\sc Vanden-Eijnden, E., and Weare, J.}
\newblock Rare event simulation of small noise diffusions.
\newblock {\em Commun. Pure Appl. Math. 65}, 12 (2012), 1770--1803.

\bibitem{restart}
{\sc Vill{\'e}n-Altamirano, M., and Vill{\'e}n-Altamirano, J.}
\newblock {RESTART}: A straightforward method for fast simulation of rare events.
\newblock In {\em Proceedings of the 26th Conference on Winter Simulation\/} (San Diego, CA, USA, 1994), WSC '94, Society for Computer Simulation International, pp.~282--289.

\bibitem{whittle2002}
{\sc Whittle, P.}
\newblock Risk-sensitivity, a strangely pervasive concept.
\newblock {\em Macroecon. Dyn. 6\/} (2002), 5--18.

\bibitem{yuan2024optimal}
{\sc Yuan, J., Shah, A., Bentz, C., and Cameron, M.}
\newblock Optimal control for sampling the transition path process and estimating rates.
\newblock {\em Communications in Nonlinear Science and Numerical Simulation 129\/} (2024), 107701.

\bibitem{zhang2016effective}
{\sc Zhang, W., Hartmann, C., and Sch{\"u}tte, C.}
\newblock Effective dynamics along given reaction coordinates, and reaction rate theory.
\newblock {\em Faraday discussions 195\/} (2016), 365--394.

\bibitem{zuev2015subset}
{\sc Zuev, K.~M.}
\newblock Subset simulation method for rare event estimation: an introduction.
\newblock In {\em Encyclopedia of Earthquake Engineering}. Springer, 2015, pp.~3671--3691.

\end{thebibliography}
\end{document}